\documentclass[11pt]{amsart}
\usepackage[margin=1in]{geometry}

\usepackage{amssymb}
\usepackage{amsthm}
\usepackage{amsmath}
\usepackage{mathrsfs}
\usepackage{amsbsy}
\usepackage{bm}
\usepackage{hyperref}
\usepackage{tikz}
\usepackage{array}
\usepackage{enumerate}
\usepackage{bbm}
\usepackage{comment}
\usepackage{mathtools}
\usepackage{tabu}
\usepackage{makecell} 
\usepackage{colortbl}
\usepackage{xcolor}

\DeclareFontFamily{U}{mathx}{}
\DeclareFontShape{U}{mathx}{m}{n}{<-> mathx10}{}
\DeclareSymbolFont{mathx}{U}{mathx}{m}{n}
\DeclareMathAccent{\widecheck}{0}{mathx}{"71}

\definecolor{LightBlue}{rgb}{0.392,0.392,1} 
\definecolor{Green}{rgb}{0,0.922,0}
\definecolor{DarkGreen}{rgb}{0,0.5,0}
\definecolor{MildGreen}{rgb}{0,0.784,0}
\definecolor{NormalGreen}{rgb}{0,0.8,0}
\definecolor{LightGreen}{rgb}{0,0.922,0}
\definecolor{Magenta}{rgb}{1,0,0.6}
\definecolor{Yellow}{rgb}{0.95,0.95,0}
\definecolor{lavender}{rgb}{0.4,0,1}
\definecolor{peach}{rgb}{1,0.43,0.39} 
\definecolor{DarkPink}{rgb}{1,0,0.45} 

\usepackage[noabbrev,capitalise,nameinlink]{cleveref}

\hypersetup{colorlinks=true, citecolor=peach, linkcolor=peach,urlcolor=peach}

\usepackage{hhline}
\setlength{\parskip}{0em}
\allowdisplaybreaks
\usepackage[noadjust]{cite}

\usepackage{caption}
\usepackage[noabbrev,capitalise,nameinlink]{cleveref}
\crefname{conjecture}{Conjecture}{Conjectures}

\newtheorem{theorem}{Theorem}[section]
\newtheorem{proposition}[theorem]{Proposition}
\newtheorem{corollary}[theorem]{Corollary}

\newtheorem{lemma}[theorem]{Lemma}

\theoremstyle{definition}

\newtheorem{remark}[theorem]{Remark}
\newtheorem{example}[theorem]{Example} 

\newcommand{\KK}{L}
\newcommand{\hh}{h}

\newcommand{\PPP}{\widecheck{P}}
\newcommand{\RRR}{\widecheck{R}}
\newcommand{\Z}{\mathcal{Z}}
\newcommand{\spann}{\mathrm{span}}
\renewcommand{\H}{\mathrm{H}}

\newcommand{\Cov}{\mathrm{Cov}}
\newcommand{\id}{\mathbbm{1}}
\newcommand{\PP}{\mathbb{P}}

\newcommand{\xx}{\xi}
\newcommand{\Q}{Q}

\newcommand{\EE}{\mathbb{E}}
\newcommand{\NN}{\mathcal{N}} 
\newcommand{\HH}{\mathcal{H}} 
\newcommand{\sub}{\mathsf{sub}}
\newcommand{\ww}{\mathsf{w}}
\renewcommand{\aa}{\mathsf{a}} 
\newcommand{\bb}{\mathsf{b}} 
\newcommand{\cc}{\mathsf{c}}
\renewcommand{\SS}{\mathfrak{S}}
\newcommand{\ZZ}{\mathbb{Z}}
\renewcommand{\AA}{\mathcal{A}} 
\newcommand{\zz}{\mathbf{z}}
 
\newcommand{\Tr}{\mathrm{Tr}} 
\newcommand{\ED}{\kappa}
\newcommand{\GG}{\nu} 
\newcommand{\II}{\mathrm{I}}

\newcommand\norm[1]{\left\lVert#1\right\rVert}

\newcommand{\dfn}[1]{\textcolor{DarkPink}{\emph{#1}}}

\definecolor{pumpkin}{RGB}{255,117,24}

\begin{document}

\title[]{Random Subwords and Billiard Walks in Affine Weyl Groups} 
\subjclass[2010]{}

\author[]{Colin Defant}
\address[]{Department of Mathematics, Harvard University, Cambridge, MA 02138, USA}
\email{colindefant@gmail.com} 

\author[]{Pakawut Jiradilok}
\address[]{Department of Mathematics, Massachusetts Institute of Technology, Cambridge, MA 02139}
\email{pakawut@mit.edu} 

\author[]{Elchanan Mossel} 
\address[]{Department of Mathematics, Massachusetts Institute of Technology, Cambridge, MA 02139}
\email{elmos@mit.edu}

\begin{abstract}
Let $W$ be an irreducible affine Weyl group, and let $\mathsf{b}$ be a finite word over the alphabet of simple reflections of $W$. Fix a probability $p\in(0,1)$. For each integer $K\geq 0$, let $\mathsf{sub}_p(\mathsf{b}^K)$ be the random subword of $\mathsf{b}^K$ obtained by deleting each letter independently with probability $1-p$. Let $v_p(\mathsf{b}^K)$ be the element of $W$ represented by $\mathsf{sub}_p(\mathsf{b}^K)$. One can view $v_p(\mathsf{b}^K)$ geometrically as a random alcove; in many cases, this alcove can be seen as the location after a certain amount of time of a random billiard trajectory that, upon hitting a hyperplane in the Coxeter arrangement of $W$, reflects off of the hyperplane with probability $1-p$. We show that the asymptotic distribution of $v_p(\mathsf{b}^K)$ is a central spherical multivariate normal distribution with some variance $\sigma_{\mathsf{b}}^2$ depending on $\mathsf{b}$ and $p$. We provide a formula to compute $\sigma_{\mathsf{b}}^2$ that is remarkably simple when $\mathsf{b}$ contains only one occurrence of the simple reflection that is not in the associated finite Weyl group.
As a corollary, we provide an asymptotic formula for $\mathbb{E}[\ell(v_p(\mathsf{b}^K))]$, the expected Coxeter length of $v_p(\mathsf{b}^K)$. For example, when $W=\widetilde A_{r}$ and $\mathsf{b}$ contains each simple reflection exactly once, we find that 
\[\lim_{K\to\infty}\frac{1}{\sqrt{K}}\mathbb{E}[\ell(v_p(\mathsf{b}^K))]=\sqrt{\frac{2}{\pi}r(r+1)\frac{p}{1-p}}.\] 
\end{abstract} 

\maketitle

\section{Introduction}\label{sec:intro} 

\begin{figure}[ht]
  \begin{center}{\includegraphics[width=\linewidth]{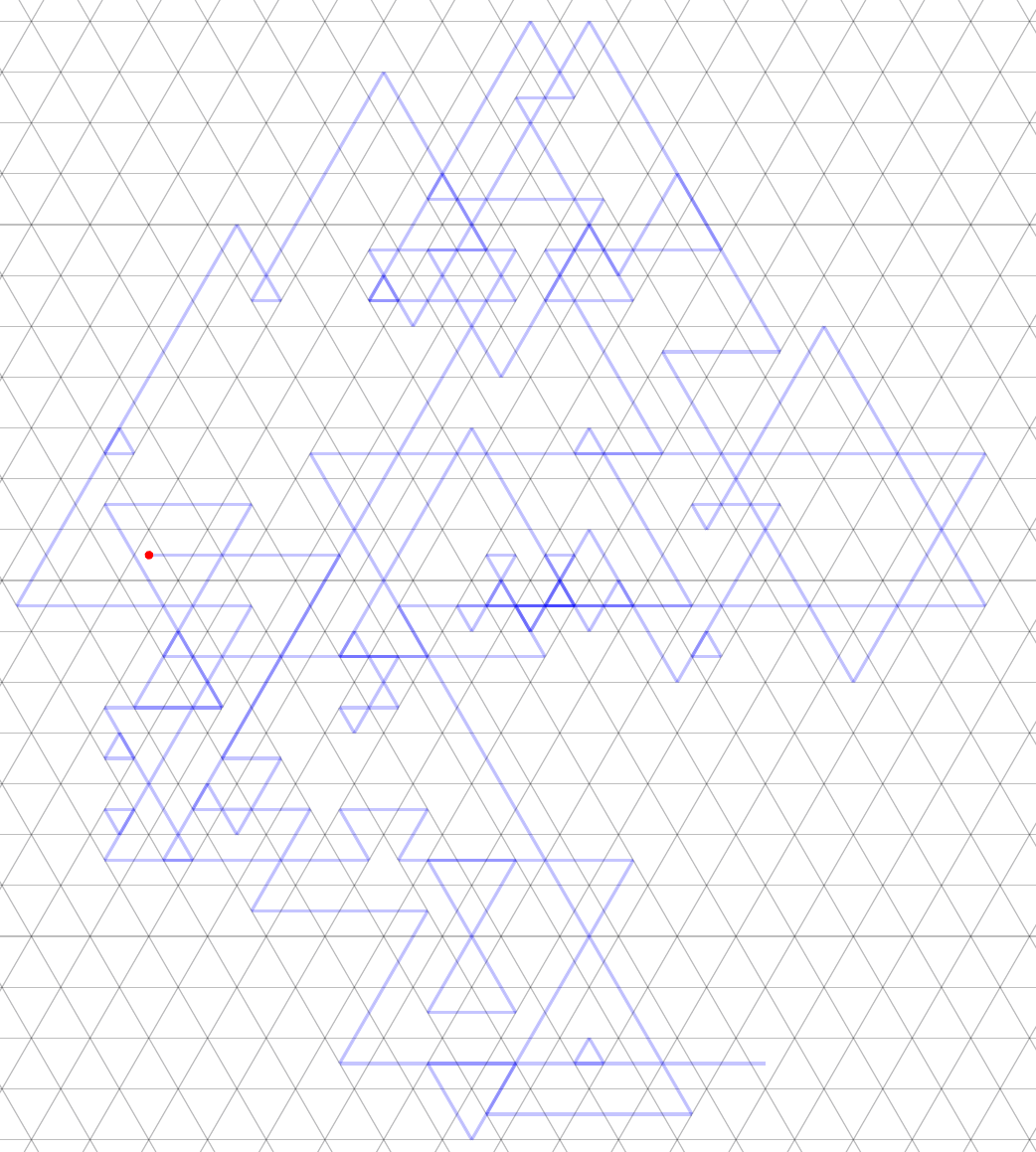}}
  \end{center}
\caption{The trajectory of a random billiard walk corresponding to a random subword $\sub_{4/5}((s_2s_1s_0)^{200})$ in type $\widetilde A_2$. Line segments traversed more frequently are drawn darker. The red dot indicates the starting point $\zz_0$.}\label{fig:A2_600}
\end{figure}

\subsection{Random Subwords} 
Our story begins with a natural question that merges Coxeter theory with probability theory. Let $(W,S)$ be a Coxeter system, and let $\ww$ be a word over the alphabet $S$. Fix a probability $p\in(0,1)$. Let $\sub_p(\ww)$ be the random subword of $\ww$ obtained by deleting each letter of $\ww$ independently with probability $1-p$. What can be said about the distribution of the element $v_p(\ww)$ of $W$ represented by $\sub_p(\ww)$? 

This question is very broad, so we should narrow our scope in order to have any hope of saying something interesting. For example, suppose $W$ is the symmetric group $\SS_n$ and $\ww_{\mathrm{stair}}$ is the \dfn{staircase reduced word} $(s_{n-1})(s_{n-2}s_{n-1})\cdots(s_1s_2\cdots s_{n-1})$, where $s_i$ is the transposition $(i\,\,i+1)$. In this very special case, one can view $\sub_p(\ww_{\mathrm{stair}})$ as a \emph{random pipe dream} (see \cite{DefantPipes,MPPY}). Since $\ww_{\mathrm{stair}}$ is reduced, it is natural to ask how ``far'' $\sub_p(\ww_{\mathrm{stair}})$ is from being reduced; one way to measure this is to compute the expected Coxeter length (i.e., the expected number of inversions) of the random permutation $v_p(\ww_{\mathrm{stair}})$ represented by $\sub_p(\ww_{\mathrm{stair}})$. In \cite{MPPY}, Morales, Panova, Petrov, and Yeliussizov obtained asymptotic bounds (as $n\to\infty$) for this expected Coxeter length; settling one of their conjectures, Defant \cite{DefantPipes} obtained an exact asymptotic formula. Defant's method also handles a variety of other choices of the starting word $\ww$ besides the staircase reduced word. 

In the present article, we focus on the case where $W$ is an irreducible affine Weyl group. Because such a group is infinite, the techniques we employ and the behavior we observe will be much different from in the aforementioned setting of the symmetric group. 
We will focus primarily on the setting in which $\ww=\bb^K$, where $\bb$ is a fixed finite word over $S$ and $K$ is a large positive integer that we imagine is growing. 

\subsection{Random Billiard Walks}\label{subsec:billiards} 

Let $W$ be an irreducible affine Weyl group. We can view $W$ as the group of affine transformations of a Euclidean space $V$ generated by the reflections through the hyperplanes in a certain affine hyperplane arrangement $\HH_W$. We make the convention that this defining action of $W$ on $V$ is a \emph{right} action. The closures of the connected components of $V\setminus\bigcup\HH_W$ are pairwise-congruent simplices called \dfn{alcoves}. There is a particular alcove $\AA$ called the \emph{fundamental alcove}. The (right) action of $W$ on $V$ induces a free and transitive action on the set of alcoves, so the map $u\mapsto \AA u$ is a bijection from $W$ to the set of alcoves. The alcoves adjacent to $\AA u$ are those of the form $\AA su$ for $s\in S$. We denote by $u^\bullet$ the centroid of the alcove $\AA u$. 

Fix a point $\zz_0$ in the interior of $\AA$, and shine a beam of light in the direction of some vector $\eta\in V$; the beam of light travels along a (straight) ray in $V$ that starts at $\zz_0$. Let us assume that the beam of light never passes through the intersection of two or more distinct hyperplanes in $\HH_W$. Let $\AA y_0,\AA y_1,\AA y_2,\ldots$ be the sequence of alcoves through which the beam of light passes. In particular, $y_0$ is the identity element of $W$, which we denote by $\id$. We obtain an infinite word\footnote{Our convention is that infinite words extend infinitely to the left.} $\ww_{\zz_0}(\eta)=\cdots s_{i_2}s_{i_1}$, where $s_{i_j}\in S$ is the unique simple reflection such that $y_{j}=s_{i_j}y_{j-1}$. 

\begin{figure}[ht]
  \begin{center}{\includegraphics[width=\linewidth]{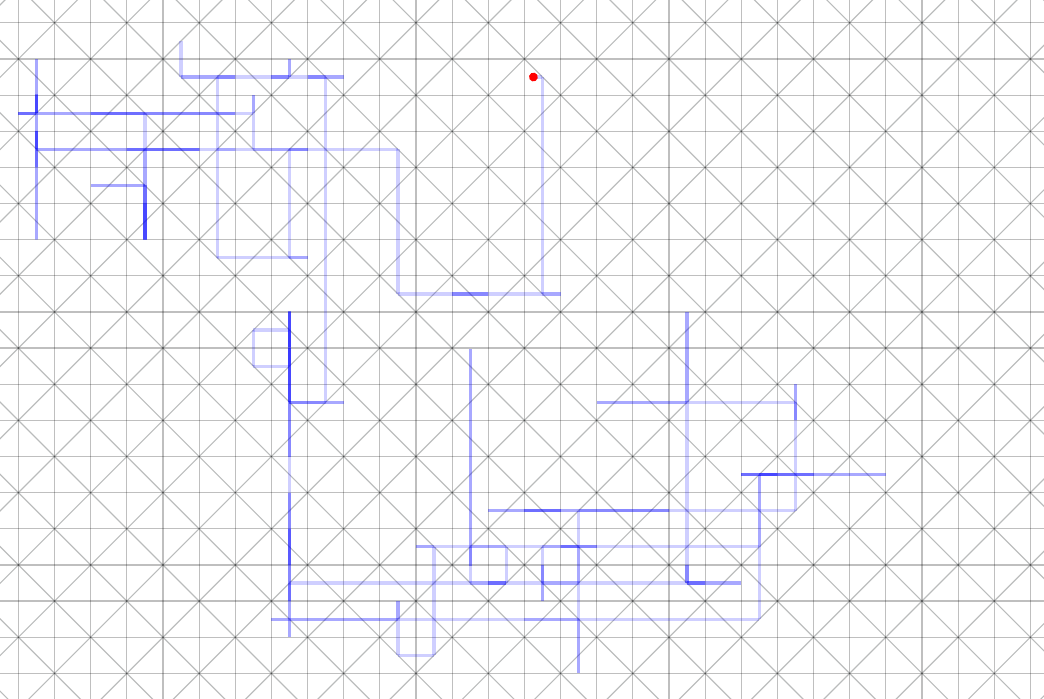}}
  \end{center}
\caption{The trajectory of a random billiard walk corresponding to a random subword $\sub_{4/5}((s_1s_2s_1s_0)^{150})$ in type $\widetilde C_2$. Line segments traversed more frequently are drawn darker. The red dot indicates the starting point $\zz_0$.}\label{fig:C2_600}
\end{figure} 

We now introduce a random process that we call a \dfn{random billiard walk}. As before, start by shining a beam of light from $\zz_0$ in the direction of $\eta$. But now, whenever the beam of light hits a hyperplane $\H\in\HH_W$, it passes through $\H$ with probability $p$ and reflects off of $\H$ with probability $1-p$. (See \cref{fig:A2_600,fig:C2_600}.) We are interested in the distribution of the location of the beam of light after a long period of time has passed. 

A crucial observation is that we can model the location of the beam of light algebraically using random subwords of suffixes of the word $\ww_{\zz_0}(\eta)=\cdots s_{i_2}s_{i_1}$. When the beam hits a hyperplane in $\HH_W$ for the first time, it passes from $\AA$ to $\AA s_{i_1}$ with probability $p$, and it reflects---thereby staying in $\AA$---with probability $1-p$. Therefore, the location of the beam of light after it has hit one hyperplane in $\HH_W$ is the alcove $\AA v_p(s_{i_1})$. When the beam hits a hyperplane in $\HH_W$ for the second time, it will either pass from $\AA v_p(s_{i_1})$ to $\AA s_{i_2}v_p(s_{i_1})$ (with probability $p$) or reflect and stay in the alcove $\AA v_p(s_{i_1})$ (with probability $1-p$). Therefore, the location of the beam of light after it has hit two hyperplanes in $\HH_W$ is the alcove $\AA v_p(s_{i_2}s_{i_1})$. In general, the alcove containing the beam of light immediately after the beam has hit a hyperplane in $\HH_W$ for the $M$-th time is $\AA v_p(s_{i_{M}}\cdots s_{i_2}s_{i_1})$. 

\begin{example}
Let $W=\widetilde A_2$. Then $V$ is a $2$-dimensional Euclidean space, and the alcoves are the unit triangles in the standard equilateral triangular grid, as depicted in \cref{fig:A2_small}. In that figure, the line segment separating an alcove $\AA u$ from the adjacent alcove $\AA s_iu$ is labeled with the index $i$. Let $\zz_0$ be the point indicated by the red dot, and let $\eta$ be a vector pointing to the right. The resulting infinite word $\ww_{\zz_0}(\eta)$ is $\cdots s_2s_1s_0s_2s_1s_0=(s_2s_1s_0)^\infty$. In blue is the trajectory of the random billiard walk corresponding to the subword $s_2s_1s_0s_2\,\underline{\,\,\,\,\,}\,\underline{\,\,\,\,\,}\,s_2s_1\,\underline{\,\,\,\,\,}\,s_2\,\underline{\,\,\,\,\,}\,s_0s_2s_1s_0$ of $(s_2s_1s_0)^5$ (underscores represent deleted letters).
\end{example}

\begin{figure}[ht]
  \begin{center}{\includegraphics[width=0.6\linewidth]{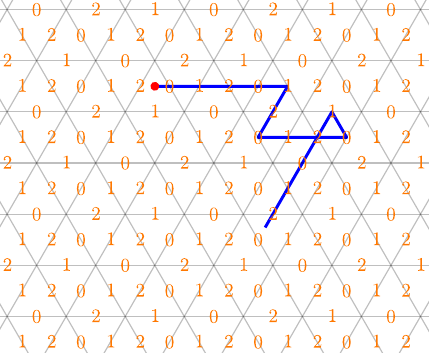}}
  \end{center}
\caption{The trajectory of the billiard walk corresponding to the subword $s_2s_1s_0s_2\,\underline{\,\,\,\,\,}\,\underline{\,\,\,\,\,}\,s_2s_1\,\underline{\,\,\,\,\,}\,s_2\,\underline{\,\,\,\,\,}\,s_0s_2s_1s_0$ of $(s_2s_1s_0)^5$ in type $\widetilde A_2$. The red dot indicates the starting point $\zz_0$.}\label{fig:A2_small} 
\end{figure}

As mentioned above, we will primarily focus on random subwords of the form $\sub_p(\bb^K)$, where $\bb$ is a fixed finite word. Such words arise from random billiard walks with ``rational'' initial directions. More precisely, the word $\ww_{\zz_0}(\eta)$ is periodic if and only if $\eta$ is a positive scalar multiple of a \emph{coroot vector}. However, there are other natural periodic infinite words that do not come from beams of light that move in straight lines. For example, suppose $\bb$ is a word that uses each simple reflection exactly once (so $\bb$ is a reduced word for a \emph{standard Coxeter element} of $W$). We can ask whether there exist a point $\zz_0$ and a vector $\eta$ such that the word $\ww_{\zz_0}(\eta)$ is equal to the word $\bb^\infty$ obtained by concatenating $\bb$ with itself infinitely many times. For some choices of $\bb$, the answer is affirmative, whereas for other choices, the answer is negative. However, it \emph{is} known \cite{Speyer} that every word of the form $\bb^K$ is \emph{reduced}. This essentially means that we can obtain $\bb^\infty$ by following a curve that travels through an infinite sequence of alcoves, never passing through any hyperplane in $\HH_W$ more than once. Informally, this means that we can repeat the same story about the beam of light from before, except now we follow a beam of ``pseudolight,'' which is a fictitious variant of light that moves along a squiggly path. 

\begin{remark}
It is worth mentioning another slightly different way of thinking about the distribution of $v_p(\bb^K)$ in terms of light beams. Instead of imagining that the entire beam of light either passes through or reflects off of a hyperplane, imagine instead that a proportion $p$ of the light passes through while a proportion $1-p$ of the light reflects. (This perspective is inspired by the behavior of light in the real world.) Over time, the light will distribute throughout the space $V$. By identifying elements of $W$ with alcoves, we can see the distribution of $v_p(\bb^K)$ as a description of how this light is distributed after a certain amount of time has passed. 
\end{remark} 

\begin{remark}
The element $v_p(\bb^K)$ has the same distribution as $v^{(K)}\cdots v^{(1)}$, where $v^{(1)},\ldots,v^{(K)}$ are independent random elements of $W$, each with the same distribution as $v_p(\bb)$. Thus, one can formulate our model in the language of random walks on groups \cite{Woess}. However, let us note that our random walk is not symmetric in general. More precisely, there can exist $y\in W$ such that $\PP(v_p(\bb)=y)\neq\PP(v_p(\bb)=y^{-1})$. This means that we cannot immediately appeal to classical results that are stated specifically for symmetric random walks. In addition, several of our main results (\cref{thm:covariance_complicated}, \ref{thm:covariance_simple}, and \ref{thm:table}) are much more precise than results typically proven about random walks on groups. 
\end{remark}

\subsection{Main Results} 

Let $\Phi$ be a finite irreducible crystallographic root system spanning an $r$-dimensional Euclidean space $V$, and let $\langle\cdot,\cdot\rangle$ be the inner product on $V$. We write $\Phi^+$ and $\Phi^-$ for the set of positive roots and the set of negative roots, respectively, so that $\Phi^-=-\Phi^+$ and $\Phi=\Phi^+\sqcup\Phi^-$. Let $W$ and $\overline W$ be the associated affine and finite Weyl groups, respectively. Let $S$ be the set of simple reflections of $W$, and let $s_0\in S$ be the simple reflection of $W$ that is not in $\overline W$. Note that $r=|S|-1$. Let \[\bb=s_{i_m}\cdots s_{i_1}\] be a finite word over $S$, and define $i_j$ for all integers $j>m$ by setting $i_{j}=i_{j-m}$. Fix a probability $p\in(0,1)$. For each integer $K\geq 0$, let $v_p(\bb^K)$ be the random element of $W$ represented by $\sub_p(\bb^K)$. 

If there is a simple reflection $s^*\in S$ that does not appear in the word $\bb$, then $v_p(\bb^K)$ is confined to the group generated by $S\setminus\{s^*\}$, which is necessarily finite. This setting is not so interesting for our purposes. Therefore, we will assume for simplicity throughout the rest of the article that $\bb$ contains each simple reflection in $S$ at least once. 

In what follows, we fix an orthonormal basis $\mathcal E=\{\varepsilon_1,\ldots,\varepsilon_r\}$ of $V$ and compute covariance matrices with respect to that basis. Let $\II_k$ be the $k\times k$ identity matrix. 

\begin{theorem}\label{thm:normal}
Fix a finite word $\bb$ over $S$ that uses each simple reflection at least once. As $K\to\infty$, the distribution of $v_p(\bb^K)^\bullet/\sqrt{K}$ converges in distribution to a multivariate normal distribution $\NN(0,\sigma^2_\bb \II_r)$, where $\sigma_\bb$ is a positive real number depending only on $\bb$ and $p$.  
\end{theorem}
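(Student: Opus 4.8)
The plan is to recognize $v_p(\bb^K)^\bullet$ as an additive functional of a Markov chain and invoke a martingale central limit theorem. First I would set up the right state space: the relevant information is not the full element of $W$, but the ``local configuration'' of the billiard trajectory, namely the coset $\overline W\backslash W$ position together with the current position $j \bmod m$ in the periodic word $\bb^\infty$. Concretely, writing $w_j = v_p(s_{i_j}\cdots s_{i_1})$ for the element after processing $j$ letters, the increment $w_{j}^\bullet - w_{j-1}^\bullet$ (the displacement of the centroid) is obtained by letting the new letter $s_{i_j}$ act, and whether that letter is effectively ``applied'' or ``skipped'' is governed by an independent Bernoulli($p$), while the displacement vector it produces depends on $w_{j-1}$ only through $\overline W w_{j-1}$ (which chamber of the finite arrangement we are in) and the index $i_j$. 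Since $i_j$ is periodic with period $m$, the pair $X_j := (\overline W w_j,\, j \bmod m)$ is a time-homogeneous Markov chain on a \emph{finite} state space, and $w_K^\bullet = \sum_{j=1}^K \Delta_j$ where $\Delta_j$ is a bounded function of $(X_{j-1}, X_j)$ and the auxiliary coin flip.

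Next I would establish that this finite Markov chain is irreducible and aperiodic — this is where the hypothesis that $\bb$ uses every simple reflection, plus the fact ($p\in(0,1)$) that letters are genuinely randomly kept or dropped, does the work: starting from any chamber one can reach any other chamber by an appropriate sequence of retained simple reflections drawn from a full period, and aperiodicity follows since with positive probability nothing happens during a period. Hence there is a unique stationary distribution $\pi$. Then I would appeal to the central limit theorem for additive functionals of finite-state Markov chains (equivalently, the martingale CLT applied after a Poisson-equation/corrector decomposition $\Delta_j = M_j + g(X_{j-1}) - g(X_j)$): $w_K^\bullet/\sqrt{K}$ converges to a centered Gaussian with some covariance matrix $\Sigma$. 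The centering at $0$ requires the mean drift $\sum_{j} \EE_\pi[\Delta_j]$ over a period to vanish; this should follow from a symmetry/averaging argument — the stationary measure is invariant under the finite Weyl group $\overline W$ acting on chambers, and summing the displacement vectors over a $\overline W$-orbit of chambers gives $0$ since $\overline W$ spans $V$ and fixes only the origin. Finally, to upgrade ``$v_p(\bb^K)^\bullet/\sqrt K \to \NN(0,\Sigma)$'' to the claimed \emph{spherical} form $\NN(0,\sigma_\bb^2 \II_r)$, I would again use $\overline W$-equivariance: the whole process is equivariant under $\overline W$ (conjugating the billiard setup by a finite Weyl group element permutes chambers and the word-structure compatibly), so $\Sigma$ commutes with every element of $\overline W$ in the orthonormal basis $\mathcal E$; since $\overline W$ acts irreducibly on $V$, Schur's lemma forces $\Sigma = \sigma_\bb^2 \II_r$, and $\sigma_\bb > 0$ because the increments are not supported on a proper subspace (again using that $\bb$ involves $s_0$, which moves the origin).

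The main obstacle I expect is the equivariance bookkeeping: making precise the claim that $v_p(\bb^K)$ is equivariant under $\overline W$ in a way that is compatible with the choice of basepoint $\zz_0$ and the fixed word $\bb$ (which is \emph{not} itself $\overline W$-symmetric), so that one legitimately gets $\Sigma$ commuting with all of $\overline W$ rather than merely with a stabilizer. The resolution is to note that it suffices to have the \emph{stationary} dynamics be $\overline W$-symmetric: the chamber-marginal of $\pi$ can be shown to be the uniform measure on chambers (or at least $\overline W$-invariant) by a direct argument using that each period's retained reflections act as a random walk on the finite group $\overline W$ that is irreducible, hence equidistributes; once the chamber distribution is uniform, both the vanishing of the drift and the commutation $\Sigma g = g \Sigma$ for $g \in \overline W$ follow cleanly. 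The remaining steps — boundedness of increments, the Poisson equation, and the martingale CLT — are standard and I would cite them rather than reprove them.
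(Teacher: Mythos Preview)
Your proposal is correct and would prove the theorem, but it follows a different route from the paper. The paper works with the coarser chain $(\overline u_K)_{K\ge 0}$ on $\overline W$ obtained by sampling only at multiples of $m$; it then uses \emph{regeneration}: the successive return times to the identity coset cut the trajectory into i.i.d.\ blocks, the ordinary multivariate CLT is applied to the block displacements $D_\id^\id$, and two technical lemmas transfer the result from the random return times $L_t$ back to the deterministic times $K$. Your approach instead writes $v_p(\bb^K)^\bullet$ directly as an additive functional of a finite-state chain and invokes the Poisson-equation/martingale CLT, which is more off-the-shelf and avoids the random-time-change lemmas entirely. Both routes finish the same way: the equivariance you describe (start the walk from $x\in\overline W$ instead of $\id$ and observe that the limit is $P_x$-rotated but also unchanged) is exactly how the paper obtains $P_x\boldsymbol{\Sigma}P_x^{-1}=\boldsymbol{\Sigma}$ and then applies Schur. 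One small correction: your chain $X_j=(\overline w_j,\,j\bmod m)$ is \emph{not} aperiodic, since the second coordinate advances deterministically and forces period $m$; you should either pass to the $m$-step chain on $\overline W$ (which is aperiodic, for the reason you give) or simply note that the additive-functional CLT for finite chains only needs irreducibility. A practical trade-off worth noting: the paper's regeneration viewpoint yields the identity $\sigma_\bb^2=\frac{1}{r|\overline W|}\Tr\Cov(D_\id^\id)$, which is the starting point for the explicit computations of $\sigma_\bb^2$ in the later sections; your martingale-CLT expression for $\boldsymbol{\Sigma}$ via the corrector $g$ is equally valid but less immediately suited to those calculations.
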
 

Let $\mathscr{Y}_1,\mathscr{Y}_2,\ldots$ be independent random variables with values in $W$, where $\mathscr{Y_j}$ has the same distribution as $v_p(s_{i_j})$. Note that $\mathscr{Y}_{Km}\cdots\mathscr{Y}_1$ has the same distribution as $v_p(\bb^K)$. As mentioned above, we can view $\AA \mathscr{Y}_M\cdots\mathscr{Y}_1$ as the alcove containing the random billiard walk associated with $\bb$ (which might be made of ``pseudolight'') after the beam has hit a hyperplane in $\HH_W$ for the $M$-th time. Let us say the random billiard walk associated with $\bb$ is \dfn{recurrent} if with probability $1$, there are infinitely many positive integers $M$ such that $\mathscr{Y}_M\cdots\mathscr{Y}_1=\id$; otherwise, say it is \dfn{transient}. As a corollary of \cref{thm:normal}, we will characterize exactly when the random billiard walk is recurrent.

\begin{corollary}\label{cor:recurrent}
Let $\bb$ be a word that uses each simple reflection in $S$ at least once. The random billiard walk associated to the word $\bb$ is recurrent if and only if $r\leq 2$.    
\end{corollary}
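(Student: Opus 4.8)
The plan is to translate recurrence of the random billiard walk into recurrence of an honest iid random walk on $W$, and then to transport the question to the coroot lattice $Q^\vee\cong\ZZ^r$, where the classical Chung--Fuchs dichotomy applies. \Cref{thm:normal} is used precisely to certify that the resulting walk on $\ZZ^r$ is centered and genuinely $r$-dimensional.

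First I would set $Z_K=\mathscr{Y}_{Km}\cdots\mathscr{Y}_1$, so that $(Z_K)_{K\ge0}$ is an iid random walk on $W$ whose step law is that of $v_p(\bb)$, and show that the billiard walk is recurrent if and only if $\sum_{K\ge0}\PP(Z_K=\id)=\infty$; equivalently, $(Z_K)$ is recurrent, by the standard renewal argument identifying summability of return probabilities with almost surely infinitely many returns for iid walks on countable groups. One direction is immediate, since if $Z_K=\id$ for infinitely many $K$ then $\mathscr{Y}_M\cdots\mathscr{Y}_1=\id$ for infinitely many $M$. For the converse, observe that whenever $\mathscr{Y}_M\cdots\mathscr{Y}_1=\id$ with $M=Km+t$ and $0\le t<m$, the element $Z_K=(\mathscr{Y}_M\cdots\mathscr{Y}_{Km+1})^{-1}$ lies in the finite set $F=\{w\in W:\ell(w)<m\}$; hence almost-sure recurrence of the billiard walk forces $Z_K\in F$ infinitely often with probability $1$, so $\sum_K\PP(Z_K=g)=\infty$ for some fixed $g\in F$. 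Because the support of $v_p(\bb)$ contains every simple reflection (keep a single letter of $\bb$), it generates $W$ as a semigroup, so there are $k_0$ and $\delta>0$ with $\PP(Z_{K+k_0}=\id)\ge\delta\,\PP(Z_K=g)$ for all $K$, and summing yields $\sum_K\PP(Z_K=\id)=\infty$.

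Next I would pass to the coroot lattice. Write $W\cong Q^\vee\rtimes\overline W$ with $Q^\vee\cong\ZZ^r$ normal of index $|\overline W|$, and let $\tau_0<\tau_1<\cdots$ be the successive times at which $Z_K\in Q^\vee$. The projection of $(Z_K)$ to the finite group $\overline W$ is an irreducible random walk, hence returns to the identity infinitely often almost surely, with return times having exponentially decaying tails; by the strong Markov property $(Z_{\tau_j})_{j\ge0}$ is then an iid random walk on $Q^\vee\cong\ZZ^r$ whose increments (bounded by the step size of $v_p(\bb)$ times the excursion length) have all moments. Since $\id\in Q^\vee$, the walk $(Z_K)$ returns to $\id$ infinitely often if and only if $(Z_{\tau_j})$ does, so it remains to decide recurrence of this walk on $\ZZ^r$. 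Its increment is centered: if it were not, then $Z_{\tau_j}^\bullet$ (and hence $Z_K^\bullet$, which between returns differs from the translation part of $Z_{\tau_j}$ by a bounded amount, with $\tau_j/j$ converging to a positive constant) would grow linearly in $K$, contradicting the $\sqrt K$ scaling in \cref{thm:normal}; and its increment is not supported within bounded distance of a proper subspace, for otherwise the limiting Gaussian $\NN(0,\sigma_\bb^2\II_r)$ would be degenerate, contradicting $\sigma_\bb>0$. The Chung--Fuchs theorem (see, e.g., \cite{Woess}) then gives that a centered, finite-variance, genuinely $r$-dimensional random walk on $\ZZ^r$ is recurrent exactly when $r\le2$, which by the reductions above is precisely the recurrence criterion for the billiard walk.

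I expect the main obstacle to be the last step: rigorously transferring the ``centered'' and ``full-dimensional'' properties from \cref{thm:normal} to the trace walk on $Q^\vee$. This requires relating the centroid $u^\bullet$ to the translation part of $u\in W$ (they differ by a bounded amount), controlling the random return times $\tau_j$ via a law of large numbers, and combining the almost-sure law of large numbers for $(Z_{\tau_j})$ with the tightness of $Z_K^\bullet/\sqrt K$ to pin the increment mean at $0$. By comparison, the billiard-to-iid reduction and the appeal to Chung--Fuchs are routine.
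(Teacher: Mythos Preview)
Your proposal is correct, and for the recurrent case $r\le 2$ it coincides with the paper's argument: the paper also passes to the induced walk on $Q^\vee$ obtained by sampling $(u_K)$ at the return times of $\overline u_K$ to $\id$ (these times are the paper's $L_t$, your $\tau_j$), observes that the increments are i.i.d.\ with mean $0$ (the paper records this as $\EE[D_\id^\id]=0$), and invokes classical recurrence on $\ZZ^r$.

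The genuine difference is in the transient case $r\ge 3$. The paper stays on the group $W$ and appeals to Alexopoulos's local limit theorem for centered random walks on groups of polynomial growth to obtain $\PP[u_K=\id]\sim cK^{-r/2}$, hence summability. You instead remain on the induced lattice walk and invoke the Chung--Fuchs dichotomy uniformly for all $r$. Your route is more elementary and self-contained (no heavy local-limit machinery), and it also makes explicit the equivalence between recurrence of the full billiard walk $(\mathscr{Y}_M\cdots\mathscr{Y}_1)_M$ and recurrence of the i.i.d.\ walk $(Z_K)$, a point the paper passes over. Conversely, the paper's route yields sharper quantitative information on $\PP[u_K=\id]$. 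Two small remarks on your writeup: the centering and nondegeneracy you extract from \cref{thm:normal} are in fact proved directly inside its proof---the paper shows $\Cov(D_\id^\id)=|\overline W|\,\sigma_\bb^2\,\II_r$ with $\sigma_\bb>0$---so you may cite that instead of re-deriving it; and your claim that between returns $Z_K^\bullet$ differs from the translation part of $Z_{\tau_j}$ by a \emph{bounded} amount is not literally true (excursions are unbounded), but your conclusion survives since it suffices to argue along the subsequence $K=\tau_j$.
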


The next corollary concerns the asymptotic distribution of the Coxeter length $\ell(v_p(\bb^K))$, which can be derived from the asymptotic distribution of $v_p(\bb^K)^\bullet$ described in \cref{thm:normal}. For $\gamma\in V$, let $\norm{\gamma}=\sqrt{\langle\gamma,\gamma\rangle}$.

\begin{corollary}\label{cor:length}
As $K\to\infty$, we have \[\mathbb E[\ell(v_p(\bb^K))]=\sqrt{\frac{2}{\pi}}\,\sigma_\bb\sum_{\beta\in\Phi^+}\norm{\beta}\sqrt{K}+o(\sqrt{K}).\] 
\end{corollary}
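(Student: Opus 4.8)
The plan is to push the central limit theorem of \cref{thm:normal} through the standard dictionary between Coxeter length and alcove geometry, and then to upgrade convergence in distribution to convergence of means by a uniform second‑moment estimate. \textbf{Step 1 (rewrite the length geometrically).} The hyperplanes of $\HH_W$ fall into $|\Phi^+|$ parallel pencils, one for each $\beta\in\Phi^+$: in the normalization matching the asserted formula, the pencil attached to $\beta$ consists of the level sets $\{x\in V:\langle x,\beta\rangle=k\}$, $k\in\ZZ$, so consecutive hyperplanes of the pencil have common normal $\beta/\norm{\beta}$ and are at distance $\norm{\beta}^{-1}$. Since $\ell(u)$ equals the number of hyperplanes of $\HH_W$ separating $\AA$ from $\AA u$, and since $\AA$ has bounded diameter while $u^\bullet$ lies in the interior of $\AA u$, the contribution of the $\beta$‑pencil to $\ell(u)$ is the number of integers lying between $\langle z,\beta\rangle$ (for a fixed reference point $z\in\AA$) and $\langle u^\bullet,\beta\rangle$, which is $|\langle u^\bullet,\beta\rangle|+O(1)$ with the error bounded uniformly in $u\in W$. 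Summing over $\Phi^+$ gives $\ell(u)=\sum_{\beta\in\Phi^+}|\langle u^\bullet,\beta\rangle|+O(1)$, hence the deterministic identity $\ell(v_p(\bb^K))/\sqrt K=\sum_{\beta\in\Phi^+}\big|\big\langle v_p(\bb^K)^\bullet/\sqrt K,\beta\big\rangle\big|+o(1)$.

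\textbf{Step 2 (identify the limit).} By \cref{thm:normal}, $v_p(\bb^K)^\bullet/\sqrt K$ converges in distribution to $Z\sim\NN(0,\sigma_\bb^2\II_r)$. Applying the continuous mapping theorem to the continuous map $x\mapsto\sum_{\beta\in\Phi^+}|\langle x,\beta\rangle|$ and using Step 1, $\ell(v_p(\bb^K))/\sqrt K$ converges in distribution to $\sum_{\beta\in\Phi^+}|\langle Z,\beta\rangle|$. Since $Z$ is a spherical Gaussian of variance $\sigma_\bb^2$, the linear functional $\langle Z,\beta\rangle$ is $\NN(0,\sigma_\bb^2\norm{\beta}^2)$, so $|\langle Z,\beta\rangle|$ is half‑normal with mean $\sqrt{2/\pi}\,\sigma_\bb\norm{\beta}$; thus the limiting random variable has expectation $\sqrt{2/\pi}\,\sigma_\bb\sum_{\beta\in\Phi^+}\norm{\beta}$, which is exactly the leading constant in the claim.

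\textbf{Step 3 (uniform integrability — the crux).} To conclude $\mathbb E[\ell(v_p(\bb^K))]/\sqrt K\to\sqrt{2/\pi}\,\sigma_\bb\sum_{\beta\in\Phi^+}\norm{\beta}$, I must show the family $\{\ell(v_p(\bb^K))/\sqrt K\}_{K\ge 1}$ is uniformly integrable; by Step 1 it suffices that $\sup_K\mathbb E\big[\norm{v_p(\bb^K)^\bullet}^2\big]/K<\infty$ (an $L^2$‑bounded family being uniformly integrable). I would obtain this by grouping $v_p(\bb^K)=\mathscr Y_{Km}\cdots\mathscr Y_1$ into $K$ i.i.d.\ blocks $B_K,\dots,B_1$, each distributed as $v_p(\bb)$, writing $B_k$ in the affine form ``translation by $\mu_k$ composed with rotation $\bar B_k\in\overline W$,'' and noting that $\norm{v_p(\bb^K)^\bullet}$ is within $O(1)$ of the norm of the translation part of $v_p(\bb^K)$, namely $\big\|\sum_{k=1}^{K}(\bar B_K\cdots\bar B_{k+1})(\mu_k)\big\|$. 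Here the $\mu_k$ are i.i.d.\ and bounded, and $\bar B_K\cdots\bar B_{k+1}$ is a partial product of the i.i.d.\ rotations $\bar B_j$. The law of these rotations (the finite part of $v_p(\bb)$) is supported on a generating set of $\overline W$ and assigns positive probability to the identity (the empty subword), so the induced random walk on the finite group $\overline W$ is irreducible and aperiodic and equilibrates geometrically fast; combining this with the facts that rotations preserve $\langle\cdot,\cdot\rangle$ and that the average of $\overline W$ acting on $V$ is $0$ (as $\overline W$ acts irreducibly on $V$), one finds that the cross‑correlations $\mathbb E\big[\big\langle(\bar B_K\cdots\bar B_{j+1})(\mu_j),\,(\bar B_K\cdots\bar B_{k+1})(\mu_k)\big\rangle\big]$ decay geometrically in $|j-k|$, whence $\mathbb E\big[\norm{v_p(\bb^K)^\bullet}^2\big]=O(K)$. (This $L^2$ bound is essentially a byproduct of the analysis underlying \cref{thm:normal}; alternatively one can exploit the regeneration structure furnished by returns of the $\overline W$‑walk to the identity, between which the translation increments are i.i.d.) The main obstacle is precisely this moment estimate — Steps 1 and 2 are then a routine consequence of \cref{thm:normal} together with the length–alcove dictionary.
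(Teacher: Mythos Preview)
Your Steps~1 and~2 are exactly the paper's proof: decompose $\ell(u)$ as $\sum_{\beta\in\Phi^+}\ell_\beta(u)$ with $\ell_\beta(u)=|\langle u^\bullet,\beta\rangle|+O(1)$, apply \cref{thm:normal} to see that each $\langle v_p(\bb^K)^\bullet/\sqrt K,\beta\rangle$ converges in law to $\NN(0,\sigma_\bb^2\norm{\beta}^2)$, and read off the half-normal mean $\sqrt{2/\pi}\,\sigma_\bb\norm{\beta}$.

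The one point where you go further than the paper is Step~3. The paper passes silently from convergence in distribution of $|\langle v_p(\bb^K)^\bullet/\sqrt K,\beta\rangle|$ to convergence of its expectation, without mentioning uniform integrability. Your proposal makes this explicit and supplies an $L^2$ bound. The sketch is correct: writing $\xx(v_p(\bb^K))$ as a sum $\sum_k P_{w_k}\mu_k$ of i.i.d.\ bounded vectors $\mu_k$ rotated by partial products $w_k$ of the i.i.d.\ $\overline B_j$'s, the cross term for $j<k$ reduces (after using orthogonality of the $P_w$'s and independence) to an inner product against $R_\bb^{\,k-j-1}$ applied to a fixed vector, and $R_\bb$ has operator norm strictly less than~$1$ on $V$ by the argument of \cref{lem:invertible} (strict inequality in the triangle inequality for nonzero vectors, plus compactness of the unit sphere). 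This gives geometric decay and hence $\EE\norm{v_p(\bb^K)^\bullet}^2=O(K)$. Your indexing of the partial products is written with the opposite handedness from the paper's right-action convention, but the structure of the bound is unaffected. The regeneration alternative you mention is also available directly from the machinery of \cref{sec:normality} (the i.i.d.\ increments $\xx(u_{L_{t+1}})-\xx(u_{L_t})$ have exponential tails by \cref{lem:exponential_tail}).

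In short: same proof as the paper, but with the uniform-integrability step that the paper elides actually carried out.
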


In order to truly understand the asymptotic behavior of $v_p(\bb^K)$, we wish to compute $\sigma^2_\bb$ exactly. We will do so by leveraging the high amount of symmetry coming from the affine Weyl group $W$.  

Let $\Q^\vee$ denote the coroot lattice of $W$, and let $\theta^\vee$ be the coroot associated with the highest root $\theta$. There is a standard semidirect product decomposition $W=\overline{W}\ltimes \Lambda$, where $\Lambda$ is a subgroup of $W$ that is naturally isomorphic to $\Q^\vee$. This yields a natural quotient map $W\to\overline{W}$, which we denote by $u\mapsto\overline{u}$. For $w\in\overline W$, let $P_w$ be the linear transformation of $V$ corresponding to the right action of $w$; that is, $P_w\gamma=\gamma w$ for all $\gamma\in V$. For $s\in S$, let $R_s=(1-p)\II_V+p{P}_{\overline s}$, where $\II_V$ is the identity map on $V$. More generally, given a finite word $\aa=s_{j_k}\cdots s_{j_1}$ over $S$, let $R_{\aa}=R_{s_{j_1}}\cdots R_{s_{j_k}}$ (note the reversed order). For each $J=\{j_1<\cdots<j_M\}\subseteq[m]$, we define the element $\overline s_J=\overline s_{i_{j_M}}\cdots\overline s_{i_{j_1}}\in \overline W$. 

Let $\Z=\{j\in[m]:i_j=0\}$. For $J\subseteq[m]$, let $\GG_J=\sum_{k\in J\cap\Z}{P}_{\overline s_{J\cap[k-1]}}\theta^\vee$. Using the assumption that $\bb$ contains every simple reflection, one can prove (see \cref{lem:invertible}) that $\II_V-R_\bb$ is invertible. Let \[\chi_\bb=\sum_{J\subseteq[m]}p^{|J|}(1-p)^{m-|J|}\norm{\GG_J}^2\quad\text{and}\quad \ED_\bb=|\overline W|(\II_V-R_\bb)^{-1}\sum_{J\subseteq[m]}p^{|J|}(1-p)^{m-|J|}\GG_J.\] 
Both of these quantities can be computed readily from the word $\bb$. Hence, the next theorem provides an computationally efficient formula for $\sigma_\bb^2$.

\begin{theorem}\label{thm:covariance_complicated}
Let $\bb=s_{i_m}\cdots s_{i_1}$ be a word that contains each simple reflection in $S$ at least once. We have 
\[\sigma_\bb^2=\frac{1}{r}\left(\frac{2}{|\overline W|}\sum_{J\subseteq[m]}p^{|J|}(1-p)^{m-|J|}\langle\GG_J,{P}_{\overline s_J}\ED_\bb\rangle+\chi_\bb\right).\]
\end{theorem}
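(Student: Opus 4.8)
The plan is to compute $\lim_{K\to\infty}\frac{1}{K}\Cov\big(v_p(\bb^K)^\bullet\big)$ directly from the multiplicative structure of $W$ and then invoke \cref{thm:normal}, which identifies this limit with $\sigma_\bb^2\II_r$; taking traces will then give $\sigma_\bb^2=\frac{1}{r}\lim_{K\to\infty}\frac{1}{K}\EE\norm{v_p(\bb^K)^\bullet}^2$, modulo the (easy) fact that $\EE[v_p(\bb^K)^\bullet]$ stays bounded once $R_\bb^K\to0$ is known, and uniform integrability of $\norm{v_p(\bb^K)^\bullet}^2/K$ (obtainable from the proof of \cref{thm:normal} or from an $O(K^2)$ bound on $\EE\norm{v_p(\bb^K)^\bullet}^4$). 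Write $v_p(\bb^K)=h_K$ with $h_\ell=v^{(\ell)}\cdots v^{(1)}$, where $v^{(1)},\dots,v^{(K)}$ are independent copies of $v_p(\bb)$, and decompose $v^{(\ell)}=\overline{v^{(\ell)}}\,t_{\Lambda^{(\ell)}}$ along $W=\overline W\ltimes\Lambda$, with $t_\mu$ the translation by $\mu$. Using the rule $\lambda(uv)=P_{\overline v}\lambda(u)+\lambda(v)$ for translation parts (immediate from the right-action conventions), an induction inside a single block of $\bb$ shows that the translation part of one copy of $v_p(\bb)$ is exactly $\GG_J$, where $J=\sub_p([m])$ is the random set of retained positions; hence each $\Lambda^{(\ell)}$ is distributed as $\GG_{J^{(\ell)}}$ for independent copies $J^{(\ell)}$ of $\sub_p([m])$. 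A second induction, across blocks, shows that the translation part of $v_p(\bb^K)$ is
\[T_K:=\sum_{\ell=1}^{K}P_{\overline{h_{\ell-1}}}\Lambda^{(\ell)},\qquad\overline{h_0}=\id.\]
Since $v_p(\bb^K)^\bullet=P_{\overline{h_K}}(\id^\bullet)+T_K$ and $\{P_w(\id^\bullet):w\in\overline W\}$ is finite, $v_p(\bb^K)^\bullet$ and $T_K$ differ by a vector of bounded norm, so $\lim_K\frac{1}{K}\EE\norm{v_p(\bb^K)^\bullet}^2=\lim_K\frac{1}{K}\EE\norm{T_K}^2$, and everything reduces to this limit.

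Next I would expand $\EE\norm{T_K}^2=\sum_{\ell,\ell'}\EE\langle P_{\overline{h_{\ell-1}}}\Lambda^{(\ell)},P_{\overline{h_{\ell'-1}}}\Lambda^{(\ell')}\rangle$. Because each $P_w$ with $w\in\overline W$ is orthogonal, the diagonal part contributes $\sum_\ell\EE\norm{\Lambda^{(\ell)}}^2=K\chi_\bb$, using $\EE\norm{\Lambda^{(1)}}^2=\sum_{J\subseteq[m]}p^{|J|}(1-p)^{m-|J|}\norm{\GG_J}^2=\chi_\bb$. For $\ell'<\ell$, put $D=\ell-\ell'$ and factor $\overline{h_{\ell-1}}=U'\,\overline{v^{(\ell')}}\,\overline{h_{\ell'-1}}$ with $U'=\overline{v^{(\ell-1)}}\cdots\overline{v^{(\ell'+1)}}$, a product of $D-1$ independent copies of $\overline{v_p(\bb)}$; orthogonality of $\overline{h_{\ell'-1}}$ collapses the $(\ell,\ell')$ term to $\EE\langle P_{\overline{v^{(\ell')}}}P_{U'}\Lambda^{(\ell)},\Lambda^{(\ell')}\rangle$. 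Conditioning on the retained set $J^{(\ell')}$ of block $\ell'$ — which determines both $\overline{v^{(\ell')}}=\overline s_{J^{(\ell')}}$ and $\Lambda^{(\ell')}=\GG_{J^{(\ell')}}$ — and using that $U'$ and $\Lambda^{(\ell)}$ are mutually independent and independent of block $\ell'$, together with $\EE[P_{\overline{v_p(\bb)}}]=R_\bb$ (so $\EE[P_{U'}]=R_\bb^{D-1}$) and $\EE[\Lambda^{(\ell)}]=\mu_\bb:=\sum_{J\subseteq[m]}p^{|J|}(1-p)^{m-|J|}\GG_J$, one obtains
\[\EE\langle P_{\overline{h_{\ell-1}}}\Lambda^{(\ell)},P_{\overline{h_{\ell'-1}}}\Lambda^{(\ell')}\rangle=\sum_{J\subseteq[m]}p^{|J|}(1-p)^{m-|J|}\big\langle\GG_J,\,P_{\overline s_J}R_\bb^{\,D-1}\mu_\bb\big\rangle=:c_D.\]

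It remains to sum. We have $\EE\norm{T_K}^2=K\chi_\bb+2\sum_{D=1}^{K-1}(K-D)c_D$, so once the $c_D$ are seen to be summable, $\frac{1}{K}\EE\norm{T_K}^2\to\chi_\bb+2\sum_{D\ge1}c_D$. The cross-terms telescope via $\sum_{D\ge1}R_\bb^{D-1}=(\II_V-R_\bb)^{-1}$, which gives $\sum_{D\ge1}c_D=\sum_{J\subseteq[m]}p^{|J|}(1-p)^{m-|J|}\langle\GG_J,P_{\overline s_J}(\II_V-R_\bb)^{-1}\mu_\bb\rangle$; since $\ED_\bb=|\overline W|(\II_V-R_\bb)^{-1}\mu_\bb$, this equals $\frac{1}{|\overline W|}\sum_{J\subseteq[m]}p^{|J|}(1-p)^{m-|J|}\langle\GG_J,P_{\overline s_J}\ED_\bb\rangle$. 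Substituting $\lim_K\frac{1}{K}\EE\norm{T_K}^2=\chi_\bb+2\sum_{D\ge1}c_D$ into $\sigma_\bb^2=\frac{1}{r}\lim_K\frac{1}{K}\EE\norm{T_K}^2$ produces exactly the asserted formula.

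I expect the main obstacle to be the pair of analytic inputs above, chiefly the estimate $\norm{R_\bb}<1$: it underlies the convergence $(\II_V-R_\bb)^{-1}=\sum_{n\ge0}R_\bb^n$ (which also re-proves \cref{lem:invertible}), the fact $R_\bb^K\to0$, and the summability $\norm{c_D}\le C\norm{R_\bb}^{D-1}$ needed to legitimize the limit of $\frac{1}{K}\sum_D(K-D)c_D$. To prove it I would note that $R_s=(1-p)\II_V+pP_{\overline s}$ satisfies $\norm{R_sw}\le\norm{w}$ with equality only when $P_{\overline s}w=w$; unwinding $R_\bb=R_{s_{i_1}}\cdots R_{s_{i_m}}$, any $w$ with $\norm{R_\bb w}=\norm{w}$ is forced to be fixed by $\overline s_i$ for every index $i$ occurring in $\bb$, hence by all simple reflections of $\overline W$, hence $w=0$, and compactness of the unit sphere upgrades this to $\norm{R_\bb}<1$. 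The other input, uniform integrability of $\norm{v_p(\bb^K)^\bullet}^2/K$, is needed only to pass from the distributional convergence of \cref{thm:normal} to convergence of second moments, and should follow from the proof of \cref{thm:normal} or from a fourth-moment bound proved by the same block-by-block bookkeeping. All remaining work — the full expansion of $\EE\norm{T_K}^2$ and the tracking of semidirect-product arithmetic — is routine though somewhat lengthy; the conceptual content is the reduction to $T_K$ and the observation that the block-to-block correlations telescope into the single operator $(\II_V-R_\bb)^{-1}$.
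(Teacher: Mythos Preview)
Your approach is correct and takes a genuinely different route from the paper's. The paper leans on the regenerative structure of the Markov chain $(\overline u_K)_{K\geq 0}$: it starts from the exact identity $\sigma_\bb^2=\frac{1}{r|\overline W|}\Tr(\Cov(D_\id^\id))$ established inside the proof of \cref{thm:normal}, applies a first-step decomposition (\cref{Prop:Master}) to the quantities $\Gamma_\id^x:=\EE\norm{D_\id^x}^2$, and then sums over $x\in\overline W$ so that $\sum_x\Gamma_\id^x$ cancels from both sides and $\Gamma_\id^\id$ is isolated; the vector $\ED_\bb$ enters as $\sum_{x\in\overline W}\EE[D_\id^x]$ (\cref{lem:sum_E}). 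You bypass the first-passage displacements $D_w^{w'}$ entirely, expanding $\EE\norm{T_K}^2$ as a double sum over blocks and collapsing the off-diagonal via $\sum_{D\geq 1}R_\bb^{\,D-1}=(\II_V-R_\bb)^{-1}$; here $\ED_\bb$ appears through the identities $\EE[P_{\overline{v_p(\bb)}}]=R_\bb$ and $(\II_V-R_\bb)^{-1}\mu_\bb=\ED_\bb/|\overline W|$. Your proof of $\norm{R_\bb}<1$ is essentially the paper's \cref{lem:invertible}, restated for the operator norm. Your argument is more direct, makes the appearance of $(\II_V-R_\bb)^{-1}$ transparent, and avoids the auxiliary family $D_w^{w'}$ altogether.

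The one place where the paper's route is cleaner is the passage from $\lim_K\frac{1}{K}\EE\norm{T_K}^2$ to $r\sigma_\bb^2$: the paper gets this for free from the exact identity \eqref{eq:sigma_b_trace}, whereas you must supply uniform integrability of $\norm{T_K}^2/K$ to upgrade the distributional convergence of \cref{thm:normal} to convergence of second moments. Your proposed fourth-moment route does work --- the summands $P_{\overline{h_{\ell-1}}}\Lambda^{(\ell)}$ are bounded and constitute an additive functional of a uniformly ergodic finite-state Markov chain, so $\EE\norm{T_K}^4=O(K^2)$ follows from a martingale-plus-coboundary decomposition --- but this deserves more than a parenthetical aside. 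Alternatively, uniform integrability can be extracted from \cref{lem:exponential_tail} together with the i.i.d.\ decomposition $\xx(u_{\KK_t})=\sum_{i=1}^t\bigl(\xx(u_{\KK_i})-\xx(u_{\KK_{i-1}})\bigr)$ used in \cref{sec:normality}.
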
 

If we assume that the word $\bb$ contains exactly one occurrence of the simple reflection $s_0$, then the formula in \cref{thm:covariance_complicated} simplifies substantially. 

\begin{theorem}\label{thm:covariance_simple}
Let $\bb=s_{i_m}\cdots s_{i_1}$ be a word over $S$ that uses each simple reflection at least once and uses $s_0$ exactly once. Let $\aa$ be the unique cyclic rotation of $\bb$ whose rightmost letter is $s_0$. Then 
\[\sigma_\bb^2=\frac{1}{r}\lim_{q\to p}\frac{q}{2q-1}\langle\theta^\vee,(2q(\II_V-R_\aa)^{-1}-\II_V)\theta^\vee\rangle.\]
\end{theorem}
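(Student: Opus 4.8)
The plan is to obtain the formula from \cref{thm:covariance_complicated} after reducing to the case $\bb=\aa$, and then to deal with the removable singularity at $p=\tfrac12$ by continuity. First I would observe that $\sigma_\bb^2$ depends only on the cyclic class of $\bb$, so $\sigma_\bb^2=\sigma_\aa^2$. Writing $\bb=\bb_1\bb_2$ and $\aa=\bb_2\bb_1$ as concatenations of words, one has $\aa^K=\bb_2\,\bb^{K-1}\,\bb_1$, so $v_p(\aa^K)$ has the distribution of $u_2\,v_p(\bb^{K-1})\,u_1$ with $u_1,u_2$ independent of $v_p(\bb^{K-1})$ and ranging over finite sets. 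Tracking centroids through the (right) action of $W$ shows that $v_p(\aa^K)^\bullet$ is, up to a bounded error, the image of $v_p(\bb^{K-1})^\bullet$ under the orthogonal map ${P}_{\overline{u_1}}$; hence $\tfrac1{\sqrt K}v_p(\aa^K)^\bullet$ is, up to $o(1)$, an orthogonal image of $\tfrac1{\sqrt K}v_p(\bb^{K-1})^\bullet$. Since the latter converges to the rotationally symmetric Gaussian $\NN(0,\sigma_\bb^2\II_r)$ by \cref{thm:normal}, so does the former, and \cref{thm:normal} applied to $\aa$ forces $\sigma_\aa=\sigma_\bb$.

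Next I would take $\bb=\aa=s_{i_m}\cdots s_{i_2}s_0$, so that $\Z=\{1\}$. Then $\GG_J=\theta^\vee$ when $1\in J$ and $\GG_J=0$ otherwise, so $\chi_\aa=p\norm{\theta^\vee}^2$ and $\ED_\aa=|\overline W|\,p\,(\II_V-R_\aa)^{-1}\theta^\vee$. The main combinatorial identity is
\[\sum_{J\ni 1}p^{|J|}(1-p)^{m-|J|}{P}_{\overline s_J}=p\,{P}_{\overline s_0}\,\mathcal T,\qquad\text{where }\mathcal T:=R_{s_{i_2}}\cdots R_{s_{i_m}}\text{ and }R_\aa=R_{s_0}\mathcal T.\]
Because the action of $W$ on $V$ is on the right, $u\mapsto{P}_u$ is an anti-homomorphism, so for $J=\{1=j_1<j_2<\cdots<j_M\}$ we have ${P}_{\overline s_J}={P}_{\overline s_0}{P}_{\overline s_{i_{j_2}}}\cdots{P}_{\overline s_{i_{j_M}}}$; summing over the subsets $\{j_2<\cdots<j_M\}\subseteq\{2,\dots,m\}$ reassembles $\prod_{j=2}^{m}\bigl((1-p)\II_V+p{P}_{\overline s_{i_j}}\bigr)=\mathcal T$. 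Feeding this into \cref{thm:covariance_complicated} gives $r\sigma_\aa^2=p\norm{\theta^\vee}^2+2p^2\langle\theta^\vee,{P}_{\overline s_0}\mathcal T(\II_V-R_\aa)^{-1}\theta^\vee\rangle$.

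Now I would use the special geometry of $s_0$: its linear part $\overline s_0$ is the reflection in $\theta^\perp$, so ${P}_{\overline s_0}\theta^\vee=-\theta^\vee$ and $R_{s_0}\theta^\vee=(1-2p)\theta^\vee$, and each $R_s$ is symmetric. The first relation (with symmetry of ${P}_{\overline s_0}$) rewrites the cross term as $-2p^2\langle\theta^\vee,\mathcal T(\II_V-R_\aa)^{-1}\theta^\vee\rangle$. For $p\neq\tfrac12$ the operator $R_{s_0}$ is invertible, so $\mathcal T=R_{s_0}^{-1}R_\aa$, and the identity $M(\II_V-M)^{-1}=(\II_V-M)^{-1}-\II_V$ gives $\mathcal T(\II_V-R_\aa)^{-1}=R_{s_0}^{-1}\bigl((\II_V-R_\aa)^{-1}-\II_V\bigr)$; since $R_{s_0}^{-1}$ is symmetric with $R_{s_0}^{-1}\theta^\vee=\tfrac1{1-2p}\theta^\vee$, the cross term collapses to $\tfrac{-2p^2}{1-2p}\bigl(\langle\theta^\vee,(\II_V-R_\aa)^{-1}\theta^\vee\rangle-\norm{\theta^\vee}^2\bigr)$. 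Adding $\chi_\aa=p\norm{\theta^\vee}^2$ and simplifying the scalar coefficients yields, for $p\neq\tfrac12$, $r\sigma_\bb^2=\tfrac{p}{2p-1}\langle\theta^\vee,(2p(\II_V-R_\aa)^{-1}-\II_V)\theta^\vee\rangle$, which is the asserted formula with $q=p$.

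Finally, for $p=\tfrac12$: by \cref{lem:invertible}, $\II_V-R_\aa$ is invertible for all $p\in(0,1)$, so the right-hand side of \cref{thm:covariance_complicated} is a rational function of $p$ with no pole in $(0,1)$, hence a continuous function $\sigma_\bb^2(p)$ on $(0,1)$. The previous step shows $\sigma_\bb^2(q)=g(q):=\tfrac1r\tfrac{q}{2q-1}\langle\theta^\vee,(2q(\II_V-R_\aa)^{-1}-\II_V)\theta^\vee\rangle$, with $R_\aa$ formed from the parameter $q$, for every $q\neq\tfrac12$; therefore $\lim_{q\to p}g(q)=\lim_{q\to p}\sigma_\bb^2(q)=\sigma_\bb^2(p)$ for all $p\in(0,1)$, which is the claim. (The limit exists at $p=\tfrac12$ because there $R_{s_0}\theta^\vee=0$, whence $R_\aa^{\mathsf T}\theta^\vee=0$ and so $\langle\theta^\vee,(\II_V-R_\aa)^{-1}\theta^\vee\rangle=\norm{\theta^\vee}^2$; the bracket then vanishes at $q=\tfrac12$, cancelling the pole of $\tfrac{q}{2q-1}$.) I expect the only genuine difficulties to be getting the factor orders right in the combinatorial identity (owing to the right action) and making the cyclic-invariance reduction precise; Steps three and four are then routine.
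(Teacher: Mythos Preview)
Your proof is correct and follows essentially the same approach as the paper: reduce to $\bb=\aa$ by cyclic invariance, specialize \cref{thm:covariance_complicated} using $\Z=\{1\}$ so that $\GG_J=\delta_{1\in J}\theta^\vee$, exploit ${P}_{\overline s_0}\theta^\vee=-\theta^\vee$ to collapse the sum, and handle $p=\tfrac12$ by continuity. The only differences are presentational: where the paper relates $\sum_{1\in J}F(J)$ to $\sum_{1\notin J}F(J)$ and then to $\langle\theta^\vee,R_\aa\ED_\aa\rangle$, you instead factor the operator sum as $p\,{P}_{\overline s_0}\mathcal T$ and write $\mathcal T=R_{s_0}^{-1}R_\aa$; and your cyclic-invariance argument invokes the rotational symmetry of the Gaussian limit rather than the paper's simpler coupling bound on Coxeter lengths (\cref{lem:invariance}).
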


\begin{remark}
The limit in \cref{thm:covariance_simple} is only needed so the formula makes sense when $p=1/2$. 
\end{remark}

A \dfn{Coxeter word} is a word over $S$ that uses each simple reflection exactly once. To illustrate the utility of \cref{thm:covariance_simple}, we will apply it to derive explicit formulas for $\sigma_\bb^2$ when $\bb$ is a Coxeter word. 

\begin{theorem}\label{thm:table}
If $\bb$ is a Coxeter word, then $\sigma_\bb^2$ and $\lim_{K\to\infty}\EE[\ell(v_p(\bb^K))]/\sqrt{K}$ are as in~\cref{table:sigma_b}. 
\end{theorem}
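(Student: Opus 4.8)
The plan is to derive \cref{thm:table} from \cref{thm:covariance_simple} together with the sphericity established in \cref{thm:normal}. Since a Coxeter word uses $s_0$ exactly once, \cref{thm:covariance_simple} applies, so computing $\sigma_\bb^2$ amounts to evaluating $\frac{1}{r}\lim_{q\to p}\frac{q}{2q-1}\langle\theta^\vee,(2q(\II_V-R_\aa)^{-1}-\II_V)\theta^\vee\rangle$, where $\aa$ is the cyclic rotation of $\bb$ whose rightmost letter is $s_0$. Two structural observations make this tractable. Writing $\aa=s_{i_{r+1}}\cdots s_{i_2}s_0$, the letters $s_{i_2},\ldots,s_{i_{r+1}}$ are a permutation of $s_1,\ldots,s_r$, so $R_\aa=R_{s_0}R_\ww$ where $\ww$ is a Coxeter word for the \emph{finite} Weyl group $\overline W$; moreover, since $\overline{s}_0=s_\theta$ and $s_\theta\theta^\vee=-\theta^\vee$, the coroot $\theta^\vee$ is an eigenvector of the self-adjoint operator $R_{s_0}=\II_V-\frac{2p}{\langle\theta,\theta\rangle}\theta\theta^{\mathsf{T}}$ with eigenvalue $1-2p$, and $R_{s_0}$ is a rank-one perturbation of $\II_V$. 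These facts reduce the evaluation of $\langle\theta^\vee,(\II_V-R_\aa)^{-1}\theta^\vee\rangle$ to a handful of pairings against the finite Coxeter-element operator $R_\ww$, which one computes directly.

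Before computing, I would show that $\sigma_\bb^2$ depends only on the type of $W$, so that any convenient Coxeter word suffices. If $\bb'$ is obtained from $\bb$ by swapping two adjacent commuting letters $s_i,s_j$, then whether or not each of $s_i,s_j$ is deleted the pair contributes the same element of $W$ (as $s_is_j=s_js_i$), so $\sub_p(\bb^K)$ and $\sub_p(\bb'^K)$ can be coupled to represent the same element; hence $v_p(\bb^K)\overset{d}{=}v_p(\bb'^K)$ and $\sigma_{\bb'}=\sigma_\bb$. If instead $\bb'=\mathsf{d}\mathsf{c}$ is a cyclic rotation of $\bb=\mathsf{c}\mathsf{d}$, then $\bb'^K=\mathsf{d}\,\bb^{K-1}\,\mathsf{c}$, so $v_p(\bb'^K)=v_p(\mathsf{d})\,v_p(\bb^{K-1})\,v_p(\mathsf{c})$ with the three factors independent and the outer two taking only finitely many values; tracking centroids under the affine action of $W$ gives $v_p(\bb'^K)^\bullet/\sqrt K=O\big(v_p(\bb^{K-1})^\bullet/\sqrt K\big)+o(1)$ for an orthogonal matrix $O$ depending only on $\overline{v_p(\mathsf{c})}$ and independent of $v_p(\bb^{K-1})$. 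Since the limit $\NN(0,\sigma_\bb^2\II_r)$ from \cref{thm:normal} is rotation-invariant, conditioning on $\overline{v_p(\mathsf{c})}$ and applying Slutsky shows $v_p(\bb'^K)^\bullet/\sqrt K$ also converges to $\NN(0,\sigma_\bb^2\II_r)$, so $\sigma_{\bb'}=\sigma_\bb$. Finally, reduced words for Coxeter elements of $W$ correspond to acyclic orientations of the (connected) Coxeter graph, any two of which are linked by source-to-sink flips, and each such flip is a cyclic rotation preceded by commutation moves; hence all Coxeter words yield the same $\sigma_\bb^2$.

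It then remains to carry out the computation type by type. For the classical types $\widetilde A_r,\widetilde B_r,\widetilde C_r,\widetilde D_r$ I would use the standard coordinate realizations of $\Phi$, fix a convenient Coxeter word (for instance $s_1s_2\cdots s_r s_0$), write $R_\aa$ as an explicit matrix, and solve the linear system $(\II_V-R_\aa)x=\theta^\vee$; the sparsity of the pairings $\langle\theta,\alpha_i\rangle$ collapses this to a short recursion (in type $\widetilde A_r$ the highest root meets only $\alpha_1$ and $\alpha_r$, leading for instance to $\sigma_\bb^2=\frac{2}{r(r+1)}\frac{p}{1-p}$). For the exceptional types $\widetilde E_6,\widetilde E_7,\widetilde E_8,\widetilde F_4,\widetilde G_2$ the same computation is performed directly from the explicit root data, done by hand for $\widetilde G_2$ and $\widetilde F_4$ and by a fixed finite-dimensional linear-algebra computation for the $E$ series. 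The length column then follows from \cref{cor:length}: $\lim_{K\to\infty}\EE[\ell(v_p(\bb^K))]/\sqrt K=\sqrt{2/\pi}\,\sigma_\bb\sum_{\beta\in\Phi^+}\norm{\beta}$, and $\sum_{\beta\in\Phi^+}\norm{\beta}$ is standard type-by-type data (computed with the same normalization of $\langle\cdot,\cdot\rangle$ used for $\theta^\vee$), so substituting the values of $\sigma_\bb^2$ yields the entries of \cref{table:sigma_b}.

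The main obstacle is this last, computational step: evaluating $\langle\theta^\vee,(\II_V-R_\aa)^{-1}\theta^\vee\rangle$ in each type and massaging it (and the resulting length) into the closed forms recorded in the table. The conceptual ingredients---invoking \cref{thm:covariance_simple} and proving word-independence---are short; the labor lies in the explicit, type-dependent linear algebra, in particular obtaining uniform closed forms along the four infinite families and treating each exceptional type.
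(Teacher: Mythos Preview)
Your overall strategy matches the paper's: invoke \cref{thm:covariance_simple}, reduce to a convenient Coxeter word via commutation moves and cyclic rotations, carry out the linear algebra type by type, and read off the length column from \cref{cor:length}. The commutation and cyclic-rotation invariance arguments are fine (the paper proves them as \cref{lem:invariance}, with a slightly simpler argument for rotation via Coxeter length).

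However, there is a genuine gap in your word-independence step. You assert that ``any two [acyclic orientations of the Coxeter graph] are linked by source-to-sink flips.'' This is true when the Coxeter graph is a tree, which covers every type except $\widetilde A_r$ for $r\geq 2$, but it is \emph{false} for cycles: two acyclic orientations of an $(r{+}1)$-cycle are flip equivalent if and only if they have the same number of clockwise edges (see \cref{rem:flip}). Thus in type $\widetilde A_r$ there are $r$ distinct cyclic-commutation classes of Coxeter words, represented for instance by $\cc^{(d)}=s_d s_{d+1}\cdots s_{r} s_{d-1}\cdots s_1 s_0$ for $1\le d\le r$, and nothing you have written so far shows that $\sigma_{\cc^{(1)}}^2=\cdots=\sigma_{\cc^{(r)}}^2$. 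The paper addresses this not by a structural argument but by computing $(\II_V-R_{\cc^{(d)}})^{-1}\theta^\vee$ explicitly for every $d$ (\cref{lem:computations_A}) and verifying that the resulting value of $\sigma_{\cc^{(d)}}^2$ is the same for all $d$; indeed, the remark after \cref{thm:table} flags this independence as ``somewhat surprising'' precisely because it does not follow from flip equivalence. So your plan to ``fix a convenient Coxeter word (for instance $s_1 s_2\cdots s_r s_0$)'' in type $\widetilde A$ would only establish the table entry for one of the $r$ classes; you must either do the computation for all $\cc^{(d)}$, or supply a separate argument (e.g.\ a diagram automorphism) relating the classes.
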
 

\begin{table}[h!]
\centering
\setlength{\arrayrulewidth}{0.5mm} % Set general line thickness
\setlength{\tabcolsep}{12pt} % Adjust cell padding if needed
\renewcommand{\arraystretch}{2.3} % Adjust row height if needed
\begin{tabular}{|c|c|c|} % thicker vertical line between columns
\hline
\rowcolor{orange!15} % Shading for the first row
$W$ & $\sigma_\bb^2$ & $\displaystyle{\lim\limits_{K\to\infty}\frac{\EE[\ell(v_p(\mathsf{b}^K))]}{\sqrt{K}}}$ \\[1.5ex]
\hline
$ \widetilde A_r $ & $ \displaystyle{\frac{2}{r(r+1)}\frac{p}{1-p}} $ & $ \displaystyle{\sqrt{\frac{2}{\pi}r(r+1)\frac{p}{1-p}}} $ \\[1.5ex]
\hline
$ \widetilde B_r $ & $ \displaystyle{\frac{1}{2r(r-1)}\frac{p}{1-p}} $ & $ \displaystyle{\left((r-1)\sqrt{2}+1\right)\sqrt{\frac{1}{\pi}\frac{r}{r-1}\frac{p}{1-p}}} $ \\[1ex]
\hline
$ \widetilde C_r $ & $ \displaystyle{\frac{1}{2r^2}\frac{p}{1-p}} $ & $ \displaystyle{\left((r-1)\sqrt{2}+2\right)\sqrt{\frac{1}{\pi}\frac{p}{1-p}}} $ \\[1ex]
\hline
$ \widetilde D_r $ & $ \displaystyle{\frac{1}{2r(r-2)}\frac{p}{1-p}} $ & $ \displaystyle{(r-1)\sqrt{\frac{2}{\pi}\frac{r}{r-2}\frac{p}{1-p}}} $ \\[1.5ex]
\hline
$ \widetilde E_6 $ & $ \displaystyle{\frac{1}{72}}\frac{p}{1-p} $ & $ \displaystyle{6\,\sqrt{\frac{2}{\pi}\frac{p}{1-p}}} $ \\[1ex]
\hline
$ \widetilde E_7 $ & $ \displaystyle{\frac{1}{168}}\frac{p}{1-p} $ & $ \displaystyle{3\,\sqrt{\frac{21}{2\pi}\frac{p}{1-p}}} $ \\[1ex]
\hline
$ \widetilde E_8 $ & $ \displaystyle{\frac{1}{480}}\frac{p}{1-p} $ & $ \displaystyle{2\,\sqrt{\frac{30}{\pi}\frac{p}{1-p}}} $ \\[1ex]
\hline
$ \widetilde F_4 $ & $ \displaystyle{\frac{1}{48}}\frac{p}{1-p} $ & $ \displaystyle{(\sqrt{2}+1)\sqrt{\frac{6}{\pi}\frac{p}{1-p}}} $  \\[1ex]
\hline
$ \widetilde G_2 $ & $ \displaystyle{\frac{1}{24}}\frac{p}{1-p} $ & $ \displaystyle{(\sqrt{3}+1)\sqrt{\frac{3}{2\pi}\frac{p}{1-p}}} $ \\[1ex]
\hline
\end{tabular}
\caption{Quantities describing the asymptotic behavior of $v_p(\bb^K)$ for the irreducible affine Weyl groups, where $\bb$ is a Coxeter word.}\label{table:sigma_b} 
\end{table}

\begin{remark}
If $\bb$ is a Coxeter word, then it follows from \cref{thm:table} that $\sigma_\bb^2$ is of the form $\varkappa_W\frac{p}{1-p}$, where $\varkappa_W$ is a real number that only depends on $W$ (and not $\bb$ or $p$). The fact that $\sigma_\bb^2$ does not depend on the specific order of the letters in $\bb$ is unsurprising when $W$ is not of type $\widetilde A_r$ for $r\geq 2$ since, in this case, it is a straightforward consequence of the fact that the Coxeter graph of $W$ is a tree (see \cref{rem:flip}). It is also unsurprising when $W=\widetilde A_2$ since the Coxeter graph of $\widetilde A_2$ is a triangle, which is highly symmetric. However, it is somewhat surprising when $W$ is of type $\widetilde A_r$ for $r\geq 3$. 

On the other hand, if $\bb$ uses some simple reflections more than once, then $\sigma_\bb^2$ might depend on the specific order of the letters in $\bb$, and $\sigma_\bb^2\frac{1-p}{p}$ might depend on $p$. For example, when $W=\widetilde A_2$, we can use \cref{thm:covariance_simple} to find that 
\[\sigma_{s_1s_2s_1s_0}^2=\frac{2}{5}\frac{p}{1-p}\neq\frac{2p(1-p)}{5-10p+6p^2}=\sigma_{s_2s_1s_1s_0}^2.\]
\end{remark}

\subsection{Related Previous Work}

Given a Coxeter system $(W,S)$ and a word $\mathsf{v}$ over $S$, we can of course multiply the letters in $\mathsf{v}$ using the usual group product in order to obtain an element of $W$. However, there is another natural product known as the \emph{Demazure product}. One could also study the distribution of the random element of $W$ obtained by taking the Demazure product of the random subword $\sub_p(\ww)$. When $W=\SS_n$ and $\ww=\ww_{\mathrm{stair}}$ is the staircase reduced word, this problem was considered in \cite{MPPY}. When $W$ is an affine Weyl group and $\ww=\bb^K$ for some finite word $\bb$, this problem was considered in \cite{DefantStoned} (in the regime where $K \to \infty$). It is notable that both of these settings relate to versions of the totally asymmetric simple exclusion process (TASEP). 

This paper can be seen as a continuation of a recent program devoted to understanding \emph{combinatorial billiard systems}, which are, roughly speaking, billiard systems that are rigid and discretized \cite{DefantRefractions, DefantStoned, DefantJiradilok, BDHKL, Zhu}. Such systems concern a beam of light that travels in a particular direction and can reflect (or \emph{refract} \cite{DefantRefractions}) when it hits a hyperplane in the Coxeter arrangement of a Coxeter group.  

\subsection{Outline}
\cref{sec:preliminaries} provides background information from Coxeter theory and probability theory. In \cref{sec:normality}, we prove \cref{thm:normal,cor:recurrent,cor:length}. We prove \cref{thm:covariance_complicated,thm:covariance_simple} in \cref{sec:computing}, and we prove \cref{thm:table} in \cref{sec:explicit}. Finally, we end with ideas for future work in \cref{sec:conclusion}.

\section{Preliminaries}\label{sec:preliminaries} 
\subsection{Finite and Affine Weyl Groups} 
As before, let $\Phi$ be a finite irreducible crystallographic root system spanning an $r$-dimensional Euclidean space $V$. Let $\alpha_1,\ldots,\alpha_r$ be the simple roots. Associated to each root $\beta\in\Phi$ is the \dfn{coroot} $\beta^\vee=\frac{2}{\norm{\beta}^2}\beta$. The \dfn{coroot lattice} is ${\Q^\vee=\spann_\ZZ\{\beta^\vee:\beta\in\Phi\}}$. Let $\theta\in\Phi^+$ denote the highest root, and let $\theta^\vee$ be its associated coroot.

For $\beta\in\Phi$ and $k\in\ZZ$, we consider the affine hyperplane 
\[\H_\beta^k=\{\gamma\in V:\langle\beta,\gamma\rangle=k\}.\] Let $s_{\beta,k}$ denote the orthogonal reflection through $\H_\beta^k$. Let $s_0=s_{\theta,1}$, and for $1\leq i\leq r$, let $s_i=s_{\alpha_i,0}$. The \dfn{affine Weyl group} of $\Phi$ is the group $W$ generated by the set $S=\{s_0,s_1,\ldots,s_r\}$ (equivalently, by all reflections of the form $s_{\beta,k}$ for $\beta\in\Phi$ and $k\in\ZZ$), while the \dfn{Weyl group} of $\Phi$ is the subgroup $\overline W$ of $W$ generated by $S\setminus\{s_0\}$ (equivalently, by all reflections of the form $s_{\beta,0}$ for $\beta\in\Phi$). We find it convenient to view the defining actions of $W$ and $\overline W$ on $V$ as right actions. The elements of $S$ are called the \dfn{simple reflections} of $W$. We denote by $\id$ the identity element of $W$ (and of $\overline W$). 
The \dfn{Coxeter graph} of $W$ is the graph with vertex set $S$ in which $s_i$ and $s_{i'}$ are adjacent if and only if $s_i$ and $s_{i'}$ do not commute; if $s_{i}$ and $s_{i'}$ are adjacent and the order of $s_is_{i'}$ is not $3$, then the edge between them is labeled by the order of $s_is_{i'}$. 

The hyperplane arrangement $\HH_W=\{\H_\beta^k\colon\beta\in\Phi,\,k\in\ZZ\}$ is called the \dfn{Coxeter arrangement} of $W$. The closures of the connected components of $V\setminus\bigcup\HH_W$ are pairwise-congruent simplices called \dfn{alcoves}. The \dfn{fundamental alcove}, denoted by $\AA$, is the alcove bounded by the hyperplanes $\H_{\alpha_i,0}$ for $1\leq i\leq r$ and the hyperplane $\H_{\theta,1}$. The defining (right) action of $W$ on $V$ induces a free and transitive (right) action of $W$ on the set of alcoves. In other words, the map $u\mapsto\AA u$ is a bijection from $W$ to the set of alcoves. Two distinct alcoves are \dfn{adjacent} if they share a common facet. For each $u\in W$, the alcoves adjacent to $\AA u$ are those of the form $\AA s_iu$ for $0\leq i\leq r$. 

Because $\Q^\vee$ is a lattice in $V$, we can form the quotient space $V/\Q^\vee$, which is an $r$-dimensional torus. For each $\eta\in \Q^\vee$, there is a unique element $\tau_\eta\in W$ that acts on $V$ via translation by $-\eta$; that is, $\gamma\tau_\eta=\gamma-\eta$ for all $\gamma\in V$. Let $\Lambda=\{\tau_\eta:\eta\in\Q^\vee\}$. The map $\eta\mapsto\tau_\eta$ is a group isomorphism from $\Q^\vee$ to $\Lambda$. The group $W$ decomposes as the semidirect product $W=\overline W\ltimes \Lambda$, so we obtain a quotient map $W\to\overline W$ denoted by $u\mapsto\overline u$. For each $u\in W$, there is a unique $\xx(u)\in \Q^\vee$ such that $u=\overline u\tau_{-\xx(u)}$. The alcove $\AA u$ is obtained by translating $\AA\overline u$ by $\xx(u)$; moreover, $\xx(u)$ is the unique coroot vector contained in $\AA u$. One case of particular importance for us is when $u$ is the simple reflection $s_0$. We have $\xx(s_0)=\theta^\vee$, so $s_0=\overline s_0\tau_{-\theta^\vee}$.  

A \dfn{reduced word} for an element $u\in W$ is a minimum-length word over $S$ that represents $u$. The length of a reduced word for $u$ is called the \dfn{Coxeter length} of $u$ and is denoted $\ell(u)$. Equivalently, $\ell(u)$ is the number of hyperplanes in $\mathcal H_W$ that separate $u^\bullet$ (the centroid of $\AA u$) from $\id^\bullet$ (the centroid of $\AA$). 

\subsection{Multivariate Distributions} 
Let us fix an orthonormal basis $\mathcal E=\{\varepsilon_1,\ldots,\varepsilon_r\}$
of $V$. We will frequently view elements of $V$ as column vectors with respect to this basis. More generally, we will write matrices with respect to this basis. We denote the transpose of a matrix $A$ by $A^\top$. We denote a probability by $\PP$ and denote an expected value by $\EE$. Suppose $X=(X_1,\ldots,X_r)^\top\in V$ is a random vector; so $X_i=\langle X,\varepsilon_i\rangle$ is a real-valued random variable. The \dfn{covariance matrix} of $X$ is the $r\times r$ matrix 
\[\Cov(X)=\EE[(X-\EE[X])(X-\EE[X])^\top].\] 

We denote by $\NN(0,\boldsymbol{\Sigma})$ the multivariate normal distribution on $V$ with mean $0$ and covariance matrix $\boldsymbol{\Sigma}$. When this distribution is nondegenerate (i.e., $\boldsymbol{\Sigma}$ is positive definite), its density function $f_{\boldsymbol{\Sigma}}$ is given by 
\[f_{\boldsymbol{\Sigma}}(\gamma)=\frac{\exp\left(-\frac{1}{2}\langle\gamma,\boldsymbol{\Sigma}^{-1}\gamma\rangle\right)}{\sqrt{(2\pi)^r\det(\boldsymbol{\Sigma})}}.\]

\subsection{Other Notation}
For integers $a,b$, we let $[a,b]=\{i\in\ZZ:a\leq i\leq b\}$. Given a statement $\mathrm{P}$, we let  
\[\delta_{\mathrm{P}}=\begin{cases}
    0 & \text{if P is false}; \\
    1 & \text{if P is true}.
\end{cases}\] 

\section{Asymptotic Normality}\label{sec:normality}

Fix a length-$m$ word $\bb=s_{i_{m}}\cdots s_{i_1}$ over $S$. For each subset ${J=\{j_1<\cdots<j_M\}\subseteq[m]}$, let \[s_J=s_{i_{j_M}}\cdots s_{i_{j_1}}\quad\text{and}\quad\overline s_J=\overline s_{i_{j_M}}\cdots\overline s_{i_{j_1}}.\] Let $v^{(1)},v^{(2)},\ldots$ be independent copies of $v_p(\bb)$. For each integer $K\geq 0$, let $u_K=v^{(K)}\cdots v^{(1)}$. Note that $u_K$ has the same distribution as the element $v_p(\bb^K)$ of $W$ represented by $\sub_p(\bb^K)$. Recall that we write $u_K^\bullet$ for the centroid of the alcove $\AA u_K$. 

A crucial tool for us is the fact that the sequence $(\overline u_K)_{K\geq 0}$ is a Markov chain $\mathscr M$ with (finite) state space $\overline W$. Its transition probabilities are such that 
\begin{equation}\label{eq:P-u-K-1-u-K-transition}
\PP[ \overline u_{K+1} = w' \mid \overline u_K = w ] = \sum_{\substack{J\subseteq[0,m-1]\\ w'=\overline s_Jw}}p^{|J|}(1-p)^{m-|J|},
\end{equation}
for all $K\geq 0$. This Markov chain is irreducible and aperiodic since $\{\overline s_J:J\subseteq[m]\}$ generates $\overline W$ and contains $\id$. For all $w,w',x\in\overline W$, we have $\PP[ \overline u_{K+1} = w' \mid \overline u_K = w ]=\PP[ \overline u_{K+1} = w'x \mid \overline u_K = wx ]$. This implies that the stationary distribution of $\mathscr{M}$ is invariant under the right action of $\overline W$ on itself, so it must be the uniform distribution on $\overline W$. For $w\in\overline W$, let $\mathcal L_w=\{L\geq 0:\overline u_L=w\}$. We will assume that each set $\mathcal L_w$ is infinite (this happens with probability~$1$). 

Let $w,w'\in\overline W$, and let $\boldsymbol{t}=\min\mathcal{L}_w$. Let $T_w^{w'}$ denote the smallest positive integer such that $\overline u_{\boldsymbol{t}+T_w^{w'}}=w'$, and let \[D_w^{w'}=\xx\left(u_{\boldsymbol{t}+T_w^{w'}}\right)-\xx\left(u_{\boldsymbol{t}}\right).\] Note that $T_w^{w'}$ has the same distribution as the first-passage time from $w$ to $w'$ in $\mathscr{M}$; this distribution depends only on $w$ and $w'$, not on $\boldsymbol{t}$ or the specific representative $u_{\boldsymbol{t}}$ of the coset $w\Lambda$. Crucially, the distribution of $D_w^{w'}$ also depends only on $w$ and $w'$. Note that 
\begin{equation}\label{eq:Dbounded}
\norm{D_w^{w'}}\leq T_w^{w'}\norm{\theta^\vee}. 
\end{equation}

A more intuitive description of $D_w^{w'}$ is as follows. For $u\in W$, the unique coroot vector contained in $\AA u$ is $\xx(u)$, so we can think of $\xx(u)$ as an approximation of the vector $u^\bullet$. Suppose we start with an arbitrary element $u\in w\Lambda$ and then repeatedly multiply on the left by independent copies of $v_p(\bb)$ until reaching an element $u'\in w'\Lambda$ (even if $w=w'$, we must multiply by at least one copy of $v_p(\bb)$). The number of copies of $v_p(\bb)$ that we use has the same distribution as $T_w^{w'}$, and $\xx(u')-\xx(u)$, which is the total displacement from $\xx(u)$ to $\xx(u')$, has the same distribution as $D_{w}^{w'}$. 

For all $w,w',x\in\overline W$, we can apply the right action of $x$ on $V$ to obtain the random vector $D_w^{w'}x$, which has the same distribution as $D_{wx}^{w'x}$:
\begin{equation}\label{eq:group_translate_D}
D_{w}^{w'}x\sim D_{wx}^{w'x}. 
\end{equation}

Let us write $\mathcal L_\id=\{\KK_0<\KK_1<\cdots\}$. Note that $\KK_0=0$. Observe that \[\KK_1-\KK_0,\, \KK_2-\KK_1,\, \KK_3-\KK_2,\ldots\] are i.i.d.\ random variables with the same distribution as $T_\id^\id$. In particular, these random variables have mean $|\overline W|$ because the stationary distribution of $\mathscr{M}$ is uniform. Furthermore, \[\xx(u_{\KK_1})-\xx(u_{\KK_0}),\, \xx(u_{\KK_2})-\xx(u_{\KK_1}),\, \xx(u_{\KK_3})-\xx(u_{\KK_2}),\ldots\] are i.i.d.\ random variables with the same distribution as $D_\id^\id$. 

\begin{lemma}\label{lem:exponential_tail}
There is a constant $\lambda>0$ such that \[\PP\left[\norm{D_w^{w'}}\geq R\right]=O(e^{-\lambda R})\] for all $w,w'\in\overline W$ and all real numbers $R$.
\end{lemma}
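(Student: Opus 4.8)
The plan is to bound the first-passage time $T_w^{w'}$ by a geometric random variable and then use the inequality $\norm{D_w^{w'}}\le T_w^{w'}\norm{\theta^\vee}$ from \eqref{eq:Dbounded}. Since the Markov chain $\mathscr M$ is irreducible on the finite state space $\overline W$, there exist a positive integer $N$ and a constant $c\in(0,1)$ — independent of the starting state — such that from any state $w$ the chain reaches any fixed target within $N$ steps with probability at least $c$. (Concretely one can take $N=|\overline W|$ and let $c$ be the minimum over all pairs $(w,w')$ of the probability that an $N$-step run of $\mathscr M$ started at $w$ visits $w'$; this minimum is positive because each such probability is positive and there are finitely many pairs.) Then a standard restart argument shows $\PP[T_w^{w'}>kN]\le(1-c)^k$ for every $k\ge 0$, uniformly in $w,w'$.

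From here the argument is routine. Writing $\lambda_0=-\frac{1}{N}\log(1-c)>0$, the geometric tail bound gives $\PP[T_w^{w'}\ge t]\le \frac{1}{1-c}\,e^{-\lambda_0 t}$ for all $t\ge 0$. Combining with \eqref{eq:Dbounded}, for any real $R$ we have
\[
\PP\!\left[\norm{D_w^{w'}}\ge R\right]\le \PP\!\left[T_w^{w'}\ge R/\norm{\theta^\vee}\right]\le \frac{1}{1-c}\,e^{-\lambda R},
\]
where $\lambda=\lambda_0/\norm{\theta^\vee}>0$; for $R\le 0$ the bound is trivially $1\le \frac{1}{1-c}$, absorbed into the implied constant. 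This is exactly the claimed estimate $\PP[\norm{D_w^{w'}}\ge R]=O(e^{-\lambda R})$ with the implied constant uniform over all $w,w'\in\overline W$.

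The only real point requiring a little care is the uniformity of the geometric bound over starting states and targets, i.e.\ establishing the single constant $c>0$ that works for all pairs. This is where finiteness of $\overline W$ is essential: irreducibility alone gives, for each pair $(w,w')$, some number of steps after which the transition probability is positive, and one needs to package these into a common $N$ and common lower bound $c$. Since $\overline W$ is finite this is immediate (take a max over finitely many pairs of a minimal such step count, and a min over finitely many positive numbers), so I expect no genuine obstacle here — just the bookkeeping of stating the uniform minorization correctly. One alternative, if one prefers to avoid even this much, is to invoke the standard fact that first-passage times of a finite irreducible Markov chain have exponential tails with rate uniform over the state space, but spelling out the $N$-step minorization makes the argument self-contained.
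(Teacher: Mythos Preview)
Your proof is correct and follows essentially the same approach as the paper: bound $\PP[T_w^{w'}\ge R/\norm{\theta^\vee}]$ exponentially using that $\mathscr M$ is an irreducible finite-state Markov chain, then invoke \eqref{eq:Dbounded}. The paper simply asserts the exponential tail for first-passage times as a standard fact, whereas you spell out the $N$-step minorization and restart argument; the content is the same.
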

\begin{proof}
Because $\mathscr{M}$ is an irreducible finite-state Markov chain, there is a constant $\lambda>0$ such that \[\PP\left[T_w^{w'}\geq R/\norm{\theta^\vee}\right]=O(e^{-\lambda R})\] for all $w,w'\in\overline W$. Therefore, the desired result follows from \eqref{eq:Dbounded}. 
\end{proof}

The preceding lemma implies that for all $w,w'\in\overline W$, the covariance matrix $\Cov(D_w^{w'})$ is finite. 

\begin{lemma}\label{lem:mean_0}
As $t\to\infty$, the random variable $\xx(u_{\KK_t})/\sqrt{t}$ converges in distribution to the multivariate normal distribution $\NN(0,\Cov(D_\id^\id))$. 
\end{lemma}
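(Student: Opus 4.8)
The plan is to recognize $\xx(u_{\KK_t})$ as a sum of i.i.d.\ random vectors and then apply the multivariate central limit theorem. Specifically, by the discussion immediately preceding the lemma, the increments $\xx(u_{\KK_1})-\xx(u_{\KK_0}),\,\xx(u_{\KK_2})-\xx(u_{\KK_1}),\ldots$ are i.i.d.\ with the same distribution as $D_\id^\id$, and since $\xx(u_{\KK_0})=\xx(u_0)=\xx(\id)=0$, we have the telescoping identity $\xx(u_{\KK_t})=\sum_{j=1}^{t}\left(\xx(u_{\KK_j})-\xx(u_{\KK_{j-1}})\right)$. Thus $\xx(u_{\KK_t})$ is a sum of $t$ i.i.d.\ copies of $D_\id^\id$.

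First I would check the hypotheses of the vector-valued CLT. By \cref{lem:exponential_tail} applied with $w=w'=\id$, the random vector $D_\id^\id$ has an exponential tail, hence all moments are finite; in particular its covariance matrix $\Cov(D_\id^\id)$ is finite (as already noted after that lemma), and this is exactly what the CLT requires. Next I would verify that $\EE[D_\id^\id]=0$. This is the one point worth a short argument: I would use the symmetry relation \eqref{eq:group_translate_D}, which says $D_\id^\id x\sim D_x^x$ for all $x\in\overline W$, combined with the fact that the stationary distribution of $\mathscr M$ is uniform on $\overline W$. Summing $\EE[D_x^x]$ over $x\in\overline W$ and using a renewal/cycle argument (a full excursion of $\mathscr M$ from $\id$ back to $\id$ decomposes, up to cyclic structure, so that the total displacement over one such excursion, averaged appropriately over starting states, must vanish because the coroot-lattice coordinate $\xx(u_K)$ is a bounded perturbation of the alcove position $u_K^\bullet$, while the Weyl-part $\overline u_K$ returns to where it started) forces $\sum_{x}\EE[D_x^x]=0$; by \eqref{eq:group_translate_D} each $\EE[D_x^x]=\EE[D_\id^\id]x$, so $\left(\sum_{x\in\overline W}x\right)\EE[D_\id^\id]$ applied appropriately gives $\EE[D_\id^\id]=0$, since $\overline W$ acts with no nonzero fixed vector on $V$ (as $\Phi$ spans $V$).

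With $\EE[D_\id^\id]=0$ and $\Cov(D_\id^\id)$ finite, the classical multivariate central limit theorem gives that $\frac{1}{\sqrt t}\sum_{j=1}^t\left(\xx(u_{\KK_j})-\xx(u_{\KK_{j-1}})\right)=\xx(u_{\KK_t})/\sqrt t$ converges in distribution to $\NN(0,\Cov(D_\id^\id))$, which is the claim. I expect the main obstacle to be the verification that $\EE[D_\id^\id]=0$: the increments of $\xx$ along the renewal times $\KK_t$ are not themselves obviously centered, and one genuinely needs the combination of the uniformity of the stationary distribution, the covariance relation \eqref{eq:group_translate_D}, and the fact that $\overline W$ has no nonzero invariant vector. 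Everything else—the telescoping, the moment bound from \cref{lem:exponential_tail}, and the invocation of the CLT—is routine.
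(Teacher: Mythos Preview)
Your overall architecture is correct and matches the paper: write $\xx(u_{\KK_t})$ as a sum of $t$ i.i.d.\ copies of $D_\id^\id$, check finite second moments via \cref{lem:exponential_tail}, verify $\EE[D_\id^\id]=0$, and invoke the multivariate CLT. The paper does exactly this.

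The gap is in your verification that $\EE[D_\id^\id]=0$. Your proposed argument is vacuous. You aim to show $\sum_{x\in\overline W}\EE[D_x^x]=0$ and then combine this with $\EE[D_x^x]=P_x\,\EE[D_\id^\id]$ to deduce $\EE[D_\id^\id]=0$. But $\sum_{x\in\overline W}P_x$ is $|\overline W|$ times the projection onto $V^{\overline W}$, and since $V$ is an irreducible nontrivial $\overline W$-representation, this operator is identically zero. Hence $\sum_{x}P_x\,v=0$ for \emph{every} $v\in V$, so the identity $\sum_x\EE[D_x^x]=0$ holds automatically and carries no information about $\EE[D_\id^\id]$. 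Your parenthetical ``renewal/cycle'' justification is also not a proof: the return of the Weyl part $\overline u_K$ to $\id$ says nothing about the lattice displacement $\xx(u_K)$, which is precisely the quantity whose drift is in question.

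What is actually needed (and what the paper does) is to show that $\EE[D_\id^\id]=\EE[D_x^x]$ for each fixed $x\in\overline W$, not merely that the sum vanishes. Combined with \eqref{eq:group_translate_D} this gives $P_x\,\EE[D_\id^\id]=\EE[D_\id^\id]$ for all $x$, and now irreducibility forces $\EE[D_\id^\id]=0$. The paper proves the equality $\EE[D_\id^\id]=\EE[D_x^x]$ by contradiction: if the two means differed, then by the CLT (applied separately along the return times to $\id$ and to $x$) the positions $\xx(u_{\KK_t})$ and $\xx(u_{\KK_{t'}^x})$ would drift apart linearly in $t$, while choosing $t'$ so that $\KK_{t'}^x$ is the first return to $x$ after $\KK_t$ makes their difference distributed like $D_\id^x$, contradicting the exponential tail in \cref{lem:exponential_tail}. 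This comparison-of-drifts step is the genuine content of the lemma, and your proposal does not supply it.
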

\begin{proof}
Choose $x\in\overline W$, and write $\mathcal{L}_x=\{L_0^x<L_1^x<\cdots\}$. We wish to show that $\EE[D_\id^\id]=\EE[D_x^x]$, so suppose by way of contradiction that this is not the case. Because \[\xx(u_{\KK_1})-\xx(u_{\KK_0}),\, \xx(u_{\KK_2})-\xx(u_{\KK_1}),\, \xx(u_{\KK_3})-\xx(u_{\KK_2}),\ldots\] are i.i.d.\ random variables with the same distribution as $D_\id^\id$ (which we have seen has finite covariance), the Central Limit Theorem tells us that $(\xx(u_{\KK_t})-t\EE[D_\id^\id])/\sqrt{t}$ converges in distribution to $\NN(0,\Cov(D_\id^\id))$. Similarly, \[\xx(u_{\KK_1^x})-\xx(u_{\KK_0^x}),\, \xx(u_{\KK_2^x})-\xx(u_{\KK_1^x}),\, \xx(u_{\KK_3^x})-\xx(u_{\KK_2^x}),\ldots\] are i.i.d.\ random variables with the same distribution as $D_x^x$, so $(\xx(u_{\KK_t^x})-t\EE[D_x^x])/\sqrt{t}$ converges in distribution to $\NN(0,\Cov(D_x^x))$. It follows that there are constants $\upsilon_x,\Upsilon_x>0$ such that with probability $1$, we have \[\norm{\xx\left(u_{\KK_t}\right)-\xx\left(u_{\KK_{t'}^x}\right)}>\upsilon_x\norm{t\EE[D_\id^\id]-t'\EE[D_x^x]}\] for all integers $t,t'\geq \Upsilon_x$. Since $\EE[D_\id^\id]\neq\EE[D_x^x]$, this implies that there is a constant $\upsilon'_x>0$ such that \[\norm{\xx\left(u_{\KK_t}\right)-\xx\left(u_{\KK_{t'}^x}\right)}>\upsilon'_xt\] for all integers $t,t'\geq \Upsilon_x$. In particular, this holds when $t'=\min\{i\in\ZZ_{\geq 0}:\KK_i^x\geq\KK_t\}$, in which case \[\norm{\xx\left(u_{\KK_t}\right)-\xx\left(u_{\KK_{t'}^x}\right)}\] has the same distribution as $\norm{D_\id^x}$. This contradicts \cref{lem:exponential_tail}. We conclude that $\EE[D_\id^\id]=\EE[D_x^x]$.

As $x$ was arbitrary, we conclude that $\EE[D_\id^\id]$ is fixed by the right action of $\overline W$ on $V$. Because $V$ is an irreducible representation of $\overline W$, we must have $\EE[D_\id^\id]=0$. This completes the proof. 
\end{proof}

For future reference, let us record the fact, which we observed in the proof of \cref{lem:mean_0}, that 
\begin{equation}\label{eq:ED11=0}
\EE[D_x^x]=0 
\end{equation}
for all $x\in\overline W$. 

For each integer $K\geq 0$, let \[\hh(K)=\min\{t\in\ZZ_{\geq 0} : \KK_t\geq K\}.\]  

\begin{lemma}\label{lem:o(1)}
As $K\to\infty$, we have 
\[\PP\left[\left|\hh(K)-\left\lfloor K/|\overline W|\right\rfloor\right|>K^{0.6}\right]=o(1).\]
\end{lemma}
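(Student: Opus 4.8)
The plan is to recognize $\hh(K)$ as the counting function of the renewal process $(\KK_t)_{t\ge 0}$ and to estimate its fluctuations by a second-moment (Chebyshev) bound; since $K^{0.6}$ lies comfortably above the natural $\sqrt K$ fluctuation scale, nothing delicate is needed. Write $\mu=|\overline W|$ and $\Delta_t=\KK_t-\KK_{t-1}$ for $t\ge 1$, so $\KK_n=\sum_{t=1}^n\Delta_t$. As recorded just before \cref{lem:exponential_tail}, the $\Delta_t$ are i.i.d.\ with the distribution of $T_\id^\id$ and common mean $\mu$; and since first-passage times in the finite irreducible chain $\mathscr M$ have exponentially decaying tails (as used in the proof of \cref{lem:exponential_tail}), we also have $\nu^2:=\mathrm{Var}(T_\id^\id)<\infty$. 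Thus $\EE[\KK_n]=\mu n$ and $\mathrm{Var}(\KK_n)=\nu^2 n$. Because $(\KK_t)_{t\ge 0}$ is increasing, we have the renewal duality
\[\{\hh(K)>t\}=\{\KK_t<K\}\qquad\text{and}\qquad\{\hh(K)\le t\}=\{\KK_t\ge K\}\]
for every integer $t\ge 0$.

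For the upper deviation, let $t_+$ be an integer of the form $t_+=K/\mu+K^{0.6}+O(1)$ chosen so that $\{\hh(K)-\lfloor K/\mu\rfloor>K^{0.6}\}\subseteq\{\hh(K)>t_+\}$ (any honest choice of integer part works, since $\lfloor K/\mu\rfloor$ and $K/\mu$ differ by less than $1$). Then $\EE[\KK_{t_+}]=\mu t_+\ge K+\mu K^{0.6}-O(1)$ and $t_+=O(K)$, so Chebyshev's inequality gives
\[\PP[\hh(K)>t_+]=\PP[\KK_{t_+}<K]\le\PP\!\left[\,\norm{\KK_{t_+}-\EE\KK_{t_+}}\ge\tfrac12\mu K^{0.6}\right]\le\frac{4\nu^2 t_+}{\mu^2 K^{1.2}}=O(K^{-0.2})\]
for all large $K$. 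The lower deviation is handled symmetrically: choosing an integer $t_-=K/\mu-K^{0.6}+O(1)$ with $\{\lfloor K/\mu\rfloor-\hh(K)>K^{0.6}\}\subseteq\{\hh(K)\le t_-\}$, one gets $\EE[\KK_{t_-}]\le K-\mu K^{0.6}+O(1)$ and hence $\PP[\hh(K)\le t_-]=\PP[\KK_{t_-}\ge K]=O(K^{-0.2})$ by the same Chebyshev estimate. Adding the two bounds yields $\PP[\,\norm{\hh(K)-\lfloor K/\mu\rfloor}>K^{0.6}]=O(K^{-0.2})=o(1)$, as claimed.

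The only point requiring attention is the routine bookkeeping with the floor functions and with the direction of the inequalities in the renewal duality; there is no genuine obstacle, the entire probabilistic content being the single application of Chebyshev's inequality together with the finiteness of the second moment of $T_\id^\id$ (which comes from its exponential tail). Alternatively, one could simply invoke the elementary renewal theorem, $\KK_{\hh(K)}/\hh(K)\to\mu$ almost surely, but the second-moment argument above is self-contained and moreover delivers the polynomial rate $O(K^{-0.2})$, far more than the $o(1)$ that is needed.
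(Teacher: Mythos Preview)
Your argument is correct. The paper takes a slightly different path: it first invokes the Central Limit Theorem for $(\KK_k-k|\overline W|)/\sqrt{k}$, then uses $\hh(K)\le K$ to compare $\hh(K)$ with $\KK_{\hh(K)}/|\overline W|$, and finally observes that $\KK_{\hh(K)}$ lies within $(\log K)^2$ of $K$ with high probability. Your approach is more direct: you use the renewal duality $\{\hh(K)>t\}=\{\KK_t<K\}$ and a single Chebyshev bound on each side. This is arguably cleaner---it needs only the finiteness of $\mathrm{Var}(T_\id^\id)$ rather than the full CLT---and it produces an explicit polynomial rate $O(K^{-0.2})$ rather than merely $o(1)$, which could in principle be fed forward if one wanted quantitative versions of the later lemmas.

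One small notational point: you write $\norm{\KK_{t_+}-\EE\KK_{t_+}}$ and $\norm{\hh(K)-\lfloor K/\mu\rfloor}$, but in this paper $\norm{\cdot}$ is reserved for the Euclidean norm on $V$; for scalar quantities you should write $|\cdot|$ to match the statement of the lemma.
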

\begin{proof}
By the Central Limit Theorem, the random variables $(\KK_k-k|\overline W|)/\sqrt{k}$ converge in distribution as $k\to\infty$ to a normal distribution $\NN(0,\varsigma^2)$ for some $\varsigma>0$. 
Since $h(K)\leq K$, we have
\[\PP\left[\left|\hh(K)-\KK_{\hh(K)}/|\overline W|\right|>K^{0.55}\right]\leq\PP\left[\left|\hh(K)-\KK_{\hh(K)}/|\overline W|\right|>\hh(K)^{0.55}\right]=o(1).\] The proof follows from the observation that 
\[\PP\left[\left|\KK_{\hh(K)}-K\right|>(\log K)^2\right]=o(1). \qedhere\]
\end{proof}

\begin{lemma}\label{lem:o(1)_again}
As $K\to\infty$, we have 
\[\PP\left[\norm{u_K^\bullet-\xx\left(u_{\KK_{\left\lfloor K/|\overline W|\right\rfloor}}\right)}>K^{0.49}\right]=o(1).\] 
\end{lemma}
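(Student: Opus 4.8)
\textbf{Proof plan for \cref{lem:o(1)_again}.}

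The plan is to bound the quantity $\norm{u_K^\bullet - \xx(u_{\KK_{\lfloor K/|\overline W|\rfloor}})}$ by splitting it through the intermediate point $\xx(u_K)$, using the triangle inequality:
\[
\norm{u_K^\bullet - \xx(u_{\KK_{\lfloor K/|\overline W|\rfloor}})} \leq \norm{u_K^\bullet - \xx(u_K)} + \norm{\xx(u_K) - \xx(u_{\KK_{\lfloor K/|\overline W|\rfloor}})}.
\]
The first term is deterministically $O(1)$: since $\xx(u_K)$ is the unique coroot vector in the alcove $\AA u_K$ and $u_K^\bullet$ is the centroid of that same alcove, their distance is bounded by the diameter of an alcove, which is a constant depending only on $\Phi$. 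So this term contributes nothing asymptotically and can be absorbed once $K$ is large.

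The real work is the second term, $\norm{\xx(u_K) - \xx(u_{\KK_{\lfloor K/|\overline W|\rfloor}})}$. First I would write $N = \lfloor K/|\overline W|\rfloor$ and use \cref{lem:o(1)} to say that, outside an event of probability $o(1)$, we have $|\hh(K) - N| \leq K^{0.6}$. On this good event, both $K$ and $\KK_N$ lie within a controlled window: indeed $\KK_{\hh(K)-1} < K \leq \KK_{\hh(K)}$, and since $\hh(K)$ and $N$ differ by at most $K^{0.6}$ while the consecutive gaps $\KK_{t+1}-\KK_t$ are i.i.d.\ with exponential tails (by \cref{lem:exponential_tail} applied with $w=w'=\id$, or directly since they are distributed as $T_\id^\id$), a standard large-deviation / union-bound argument shows that outside another $o(1)$-probability event we have $|K - \KK_N| \leq K^{0.7}$, say. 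More precisely: the sum of $\lceil K^{0.6}\rceil$ i.i.d.\ copies of $T_\id^\id$ exceeds $K^{0.7}$ with probability $o(1)$ because the mean of each is the constant $|\overline W|$ and $K^{0.6}\cdot|\overline W| = o(K^{0.7})$, and the exponential tail gives concentration; so the partial sums of gaps between index $N$ and index $\hh(K)$ are $O(K^{0.6})$ with high probability, forcing $\KK_N$ to be within $O(K^{0.6})$ of $\KK_{\hh(K)}$, hence within $O(K^{0.6}) + T_\id^\id$ of $K$ — and a single copy of $T_\id^\id$ is $o(K^{0.49})$ with high probability too.

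Now, conditioning on the (high-probability) event that $|\KK_N - K|$ is small, I want to say that $\xx(u_K)$ and $\xx(u_{\KK_N})$ are close. The point is that passing from step $\KK_N$ to step $K$ involves multiplying by at most $|K - \KK_N|$ additional copies of $v_p(\bb)$, each of which translates $\xx$ by a vector of norm at most $\norm{\theta^\vee}$ times the number of $s_0$'s used — in any case by $O(1)$ per step, so the total displacement is $O(|K-\KK_N|)$. Wait — I should be careful: $\KK_N$ could exceed $K$, in which case I instead run the argument backwards from $u_{\KK_N}$ to $u_K$; either way the displacement $\norm{\xx(u_K) - \xx(u_{\KK_N})}$ is bounded by $C|K - \KK_N|$ for a constant $C$. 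Combining, on the good event this is $O(K^{0.6})$... which is \emph{too big} for the claimed bound $K^{0.49}$. So I need the sharper version: the relevant quantity is not $|\hh(K) - N|\cdot|\overline W|$ but rather the displacement accumulated over an interval of length that is itself $o(K^{0.49})$ with high probability. Re-examining: between $\KK_N$ and $\KK_{\hh(K)}$ there are $|\hh(K)-N| \leq K^{0.6}$ returns to $\id$, but these returns contribute \emph{net} displacement that is a sum of $|\hh(K)-N|$ i.i.d.\ mean-zero copies of $D_\id^\id$ (using \eqref{eq:ED11=0} and \cref{lem:exponential_tail}); such a sum has norm $O(\sqrt{K^{0.6}}\cdot\mathrm{polylog}) = O(K^{0.3+\epsilon}) = o(K^{0.49})$ with high probability by Chebyshev (the covariance $\Cov(D_\id^\id)$ being finite). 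And the final stretch from $\KK_{\hh(K)}$ down to $K$ is a single excursion of length $\leq T_\id^\id$, contributing $O(T_\id^\id) = o(K^{0.49})$ with high probability by the exponential tail. So the combined displacement $\norm{\xx(u_K) - \xx(u_{\KK_N})}$ is $o(K^{0.49})$ off an $o(1)$-probability event, which is exactly what is needed.

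\textbf{Main obstacle.} The delicate point is the bookkeeping in the last paragraph: I must not naively bound the displacement over $|\hh(K) - N|$ steps by $O(|\hh(K)-N|)$ (that gives $K^{0.6}$, which fails), but instead recognize that the displacement between two returns to $\id$ decomposes into i.i.d.\ \emph{mean-zero} increments distributed as $D_\id^\id$, so that its growth is $\sqrt{\cdot}$ rather than linear — this is where \eqref{eq:ED11=0} and the finiteness of $\Cov(D_\id^\id)$ (from \cref{lem:exponential_tail}) are essential. One also has to handle the two "boundary" excursions (from $\KK_{N}$ or $\KK_{\hh(K)}$ to $K$) separately via the exponential tail, and to be slightly careful about the sign of $\KK_N - K$. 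Everything else — the alcove-diameter bound, the CLT-based gap concentration, the union bounds — is routine.
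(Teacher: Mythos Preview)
Your approach is essentially the same as the paper's: both pass through an intermediate point near $u_K^\bullet$, invoke \cref{lem:o(1)} to confine $|\hh(K)-N|$ to $[-K^{0.6},K^{0.6}]$, and then control the displacement $\xx(u_{\KK_{\hh(K)}}) - \xx(u_{\KK_N})$ as a sum of i.i.d.\ mean-zero copies of $D_\id^\id$. The paper uses $\xx(u_{\KK_{\hh(K)}})$ as the intermediate rather than your $\xx(u_K)$, but since $|K-\KK_{\hh(K)}|$ has exponential tail this is an immaterial difference.

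There is one point where you are too quick. You write that the sum of $|\hh(K)-N|$ copies of $D_\id^\id$ has norm $O(K^{0.3+\epsilon})$ ``by Chebyshev.'' The issue is that $|\hh(K)-N|$ is a random index that depends on the return times $T_\id^\id$, which are \emph{not} independent of the displacements $D_\id^\id$ (both are functionals of the same underlying excursion), so Chebyshev cannot be applied directly to the random-length sum. And a naive union bound over the $\sim K^{0.6}$ possible values of the index would give $K^{0.6}\cdot K^{-0.38}=K^{0.22}$, which diverges. The fix is easy: either use Kolmogorov's maximal inequality (which controls $\max_{m\le K^{0.6}}\lVert S_m\rVert$ with the same second-moment input and yields the $K^{-0.38}$ bound in one shot), or do as the paper does and use a Chernoff bound on each partial sum---the exponential tail from \cref{lem:exponential_tail} gives a finite moment generating function near $0$, and since $\EE[D_\id^\id]=0$ the resulting bound is strong enough to survive a union bound over all $O(K^{1.2})$ pairs of indices in the window.
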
 

\begin{proof}
We have 
\[\PP\left[\norm{u_K^\bullet-\xx\left(u_{\KK_{\hh(K)}}\right)}>(\log K)^2\right]=o(1),\] so it suffices to show that 
\[\PP\left[\norm{\xx\left(u_{\KK_{\hh(K)}}\right)-\xx\left(u_{\KK_{\left\lfloor K/|\overline W|\right\rfloor}}\right)}>K^{0.48}\right]=o(1).\] Let $\boldsymbol{\Xi}$ be the set of pairs $(t,t')$ of integers satisfying \[\left\lfloor K/|\overline W|\right\rfloor-K^{0.6}\leq t<t'\leq \left\lfloor K/|\overline W|\right\rfloor+K^{0.6}.
\] 
Let $\mathtt{A}$ be the event that $\left|\hh(K)-\left\lfloor K/|\overline W|\right\rfloor\right|>K^{0.6}$, and let $\mathtt{B}$ be the event that there exists $(t,t')\in\boldsymbol{\Xi}$ such that $\norm{\xx\left(u_{\KK_{t'}}\right)-\xx\left(u_{\KK_{t}}\right)}>K^{0.48}$. If \[\norm{\xx\left(u_{\KK_{\hh(K)}}\right)-\xx\left(u_{\KK_{\left\lfloor K/|\overline W|\right\rfloor}}\right)}>K^{0.48},\] then $\mathtt{A}$ or $\mathtt{B}$ must occur. According to \cref{lem:o(1)}, $\PP[\mathtt{A}]=o(1)$. Hence, we just need to show that $\PP[\mathtt{B}]=o(1)$. 

For $j\in[r]$ and $(t,t')\in\boldsymbol{\Xi}$, let $\mathtt{B}_j(t,t')$ be the event that $\left|\left\langle \xx\left(u_{L_{t'}}\right)-\xx\left(u_{L_{t}}\right),\varepsilon_j\right\rangle\right|>K^{0.48}/r$. If $\mathtt{B}$ occurs, then one of the events $\mathtt{B}_j(t,t')$ must occur; indeed, this follows from the inequality 
\[\norm{\xx\left(u_{\KK_{t'}}\right)-\xx\left(u_{\KK_{t}}\right)}\leq\sum_{j=1}^r\left|\left\langle \xx\left(u_{L_{t'}}\right)-\xx\left(u_{L_{t}}\right),\varepsilon_j\right\rangle\right|.\] Since $|\boldsymbol{\Xi}|\leq 4K^{1.2}$, we just need to show that $\PP[\mathtt{B}_j(t,t')]=o(K^{-1.2})$ for each $j\in[r]$ and $(t,t')\in\boldsymbol{\Xi}$. 

Fix $j\in[r]$ and $(t,t')\in\boldsymbol{\Xi}$. Let \[G(z)=\EE\left[e^{\langle D_\id^\id,\varepsilon_j\rangle z}\right]\] be the moment generating function of the random variable $\langle D_\id^\id,\varepsilon_j\rangle$. For $z\neq 0$, we can use \cref{lem:exponential_tail} to find that 
\begin{align*}
G(z)&\leq\int_0^\infty\mathbb P\left[e^{\norm{D_\id^\id z}}\geq R\right]\mathrm{d}R \\ 
&=\int_0^\infty\mathbb P\left[\norm{D_\id^\id}\geq (\log R)/|z|\right]\mathrm{d}R \\ 
&\leq 1+O\left(\int_1^\infty R^{-\lambda/|z|}\mathrm{d}R\right).
\end{align*}
Hence, $G(z)<\infty$ whenever $|z|<\lambda/2$. We know by  \eqref{eq:ED11=0} that $\EE[\langle D_\id^\id,\varepsilon_j\rangle]=0$. Therefore, we can use the Taylor expansion of $G$ at $0$ to find that there is a constant $C>0$ (independent of $K$) such that $G(z)<1+Cz^2$ whenever $|z|<\lambda/2$. The random variable $\left\langle \xx\left(u_{L_{t'}}\right)-\xx\left(u_{L_{t}}\right),\varepsilon_j\right\rangle$ is a sum of $t'-t$ i.i.d.\ random variables with the same distribution as $\langle D_\id^\id,\varepsilon_j\rangle$, so its moment generating function is $G(z)^{t'-t}$. It follows from a standard Chernoff bound that 
\begin{align*}
\PP[\mathtt{B}_j(t,t')]&=\PP\left[\left|\left\langle \xx\left(u_{L_{t'}}\right)-\xx\left(u_{L_{t}}\right),\varepsilon_j\right\rangle\right|>K^{0.48}/r\right] \\ 
&\leq(1+Cz^2)^{t'-t}e^{-zK^{0.48}/r}
\\ 
&\leq(1+Cz^2)^{2K^{0.6}}e^{-zK^{0.48}/r}
\end{align*} 
for all $z\in(0,\lambda/2)$. If $K$ is sufficiently large, then we can set $z=K^{-0.2}$ to find that 
\begin{align*}
\PP[\mathtt{B}_j(t,t')]&\leq(1+CK^{-0.4})^{2K^{0.6}}e^{-K^{0.28}/r} \\ 
&\leq (e^{CK^{-0.4}})^{2K^{0.6}}e^{-K^{0.28}/r} \\ 
&=o(K^{-1.2}),
\end{align*} 
as desired. 
\end{proof}

We are now in a position to prove \cref{thm:normal}, which states that $u_K^\bullet/\sqrt{K}$ converges in distribution to a multivariate normal distribution of the form $\NN(0,\sigma^2_\bb \II_r)$ for some $\sigma_\bb>0$. 

\begin{proof}[Proof of \cref{thm:normal}]
Let $\boldsymbol{\Sigma}=\frac{1}{|\overline W|}\Cov{D_\id^\id}$. It follows from \cref{lem:mean_0,lem:o(1)_again} that $u_K^\bullet/\sqrt{K}$ converges in distribution to $\NN(0,\boldsymbol{\Sigma})$. Choose $x\in\overline W$, and let $k_x=\min\mathcal{L}_x$. Let \[\widetilde u_{K}=v^{(k_x+K)}v^{(k_x+K-1)}\cdots v^{(k_x+1)}.\] Then $\widetilde u_K$ has the same distribution as $u_K$, so $\widetilde u_K^\bullet/\sqrt{K}$ converges in distribution to $\NN(0,\boldsymbol{\Sigma})$. But $\AA u_{k_x+K}=\AA\widetilde u_Ku_{k_x}=\AA \widetilde u_Kx+\xx(u_{k_x})$, so \[\norm{(k_x+K)^{-1/2}u_{k_x+K}^\bullet-K^{-1/2}u_{K}^\bullet x}\to 0\] as $K\to\infty$. This implies that $(k_x+K)^{-1/2}u_{k_x+K}^\bullet$ converges in distribution to $\mathcal{N}(0, P_x \boldsymbol{\Sigma} P_x^{-1})$. Hence, 
\[
\boldsymbol{\Sigma} = P_x \boldsymbol{\Sigma} P_x^{-1}.
\]
As $x$ was arbitrary, we deduce that $\boldsymbol{\Sigma}$ commutes with every matrix of the form $P_w$ for $w\in\overline W$. As $V$ is an irreducible representation of $\overline{W}$, we can use Schur's Lemma to deduce that $\boldsymbol{\Sigma}$ is of the form $\sigma_\bb^2\II_r$ for some $\sigma_\bb\geq 0$. Because $\boldsymbol{\Sigma}=\frac{1}{|\overline W|}\Cov{D_\id^\id}$, we have 
\begin{equation}\label{eq:sigma_b_trace}
\sigma_\bb^2=\frac{1}{r|\overline W|}\Tr(\Cov(D_{\id}^{\id})). 
\end{equation}
This implies that $\sigma_\bb>0$. 
\end{proof}

\begin{proof}[Proof of \cref{cor:recurrent}]
Suppose first that $r \le 2$. Since
\[
\xx(u_{\KK_1})-\xx(u_{\KK_0}),\, \xx(u_{\KK_2})-\xx(u_{\KK_1}),\, \xx(u_{\KK_3})-\xx(u_{\KK_2}),\ldots
\]
are i.i.d.\ random vectors of mean $0$ in the $r$-dimensional lattice $Q^\vee$, it follows from the classical theorem on recurrence of random walks \cite[Section~7]{Spi76} that there are infinitely many positive integers $M$ for which
\[
\xx(u_{\KK_M})=\xx(u_{\KK_M}) - \xx(u_{\KK_0}) = \sum_{i=1}^M \left( \xx(u_{\KK_i}) - \xx(u_{\KK_{i-1}}) \right) = 0.
\]
For such $M$, the definition of $\KK_M$ ensures that $\overline u_{\KK_M}=\id$, so $u_{\KK_M}=\id$.

Now suppose $r\geq 3$. To show that the random billiard walk is transient, it suffices by a classical result of P\'olya (see \cite[Section~2]{Spi76}) to show that \begin{equation}\label{eq:Polya}
\sum_{K\geq 0}\PP[u_K=\id]<\infty.
\end{equation} Let $\boldsymbol{\mu}$ be the probability distribution on $W$ defined by $\boldsymbol{\mu}(u)=\PP[v_p(\bb)=u]$. The sequence $(u_K)_{K\geq 0}$ is a random walk on $W$ driven by $\boldsymbol{\mu}$. It follows from \cref{thm:normal} that $\lim_{K\to 0}\norm{u_K^\bullet}/K=0$ almost surely, so $\boldsymbol{\mu}$ is centered. Since $W$ has polynomial growth of degree $r$, it follows from \cite[Corollary~1.17]{Alexopoulos} that there are constants $c_1,c_2>0$ such that \[\left\lvert\PP[u_K=\id]-c_1K^{-r/2}\right\rvert<c_2K^{-(r+1)/2}\] for all $K\geq 1$. 
This readily implies \eqref{eq:Polya}. 
\end{proof}

We can now prove \cref{cor:length}, which provides an asymptotic formula for the expected Coxeter length $\EE[\ell(v_p(\bb^K))]$. 

\begin{proof}[Proof of \cref{cor:length}]
For $\beta\in\Phi^+$ and $u\in W$, let $\ell_\beta(u)$ denote the number of hyperplanes of the form $\H_\beta^k$ that separate the centroid of the alcove $\AA u$ from the centroid of the alcove $\AA$. Then $\ell(u)=\sum_{\beta\in\Phi^+}\ell_\beta(u)$. We have $\ell_\beta(u)=|\langle u^\bullet,\beta\rangle|+O(1)$. Hence, 
\[
\EE[\ell(v_p(\bb^K))]=\sum_{\beta\in\Phi^+}\EE\left[\left\lvert\langle v_p(\bb^K)^\bullet/\sqrt{K},\beta\rangle\right\rvert\right]\sqrt{K}+O(1).
\]
It follows from \cref{thm:normal} that for each fixed $\beta\in\Phi^+$, the random variable $\langle v_p(\bb^K)^\bullet/\sqrt{K},\beta\rangle$ converges in distribution to a (univariate) normal distribution with mean $0$ and variance $\sigma_\bb^2\norm{\beta}^2$. Consequently,
\[\lim_{K\to\infty}\EE\left[\left\lvert\langle v_p(\bb^K)^\bullet/\sqrt{K},\beta\rangle\right\rvert\right]=\frac{1}{\sqrt{2\pi}\sigma_\bb\norm{\beta}}\int_{-\infty}^\infty |x|e^{-x^2/(2\sigma_\bb^2\norm{\beta}^2)}\mathrm{d}x=\sqrt{\frac{2}{\pi}}\sigma_\bb\norm{\beta},\]
and this implies the desired result. 
\end{proof} 

\section{Computing the Covariance}\label{sec:computing}
\subsection{Invariance of the Covariance}\label{subsec:invariance}  
Our goal in this section is to compute the number $\sigma_\bb^2$ appearing in \cref{thm:normal}. As an initial step, we prove a simple lemma providing conditions under which we can change the word $\bb$ without changing the number $\sigma_\bb^2$. 

We can apply a \dfn{commutation move} to a word over $S$ by swapping two adjacent simple reflections that commute with each other in $W$. Let us say two finite words $\aa$ and $\aa'$ over $S$ are \dfn{cyclically commutation equivalent} if $\aa'$ can be obtained from $\aa$ by applying a sequence of commutation moves and cyclic rotations. 

\begin{lemma}\label{lem:invariance}
If $\aa$ and $\aa'$ are cyclically commutation equivalent finite words over $S$, then $\sigma_\aa^2=\sigma_{\aa'}^2$. 
\end{lemma}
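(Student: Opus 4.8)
The plan is to reduce the claim to two cases: (1) $\aa'$ is obtained from $\aa$ by a single cyclic rotation, and (2) $\aa'$ is obtained from $\aa$ by a single commutation move. Since cyclic commutation equivalence is generated by these two kinds of operations, establishing invariance of $\sigma^2$ under each suffices. In both cases the strategy is to exhibit a distribution-preserving correspondence between the relevant random walks, and then invoke the trace formula \eqref{eq:sigma_b_trace}, namely $\sigma_\bb^2 = \frac{1}{r|\overline W|}\Tr(\Cov(D_\id^\id))$, which expresses $\sigma_\bb^2$ purely in terms of the displacement random variable $D_\id^\id$ of the Markov chain $\mathscr M$.

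For the cyclic rotation case, suppose $\aa = s_{i_m}\cdots s_{i_1}$ and $\aa' = s_{i_1}s_{i_m}\cdots s_{i_2}$ (rotating the last letter to the front). First I would observe that running the chain $\mathscr M$ for the word $\aa'$ starting from $\overline u_0 = \id$ is the same, after a single application of $v_p(s_{i_1})$, as running the chain for $\aa$ starting from the random state $\overline{v_p(s_{i_1})}$; more precisely, the two chains are conjugate by a fixed ``shift'' in the indexing of the i.i.d.\ letters $\mathscr Y_1, \mathscr Y_2, \ldots$. Because $\mathscr M$ is irreducible with uniform stationary distribution and because $D_w^{w'}$ depends only on $w$ and $w'$ (as recorded in the text before \cref{lem:exponential_tail}), the distribution of $D_\id^\id$ is unaffected by which cyclic shift of $\bb$ we use to generate the steps: a first-passage excursion from $\id$ back to $\id$ in one version corresponds to a first-passage excursion from $\id$ back to $\id$ in the other (possibly re-anchored at a different letter-index, but the re-anchoring is a deterministic relabeling that does not change the joint law of $(T_\id^\id, D_\id^\id)$). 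Hence $\Cov(D_\id^\id)$, and therefore $\sigma^2$, is the same for $\aa$ and $\aa'$. Alternatively, and perhaps more cleanly, one can argue directly at the level of \cref{thm:normal}: $v_p(\aa'^K)$ and $v_p(\aa^K)$ differ by multiplying on one side by at most one copy of $v_p(s_{i_1})$ and on the other side by the corresponding correction, an $O(1)$ perturbation of the alcove that vanishes after dividing by $\sqrt K$, so the two limiting Gaussians coincide.

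For the commutation move case, suppose $\aa$ and $\aa'$ differ by swapping adjacent commuting letters $s_{i_{j+1}}$ and $s_{i_j}$. The key point is that $s_{i_{j+1}} s_{i_j} = s_{i_j} s_{i_{j+1}}$ in $W$ (not just in $\overline W$), since the simple reflections commute in $W$ exactly when they commute in $\overline W$. I would then construct a coupling: given the independent Bernoulli($p$) retention choices for each of the $m$ positions of $\aa^K$, build the corresponding subword of $\aa'^K$ by using the ``same'' choices with positions $j$ and $j+1$ (within each block) interchanged. Because the two swapped letters commute as elements of $W$, whatever retained sub-product they contribute — one of $\id$, $s_{i_j}$, $s_{i_{j+1}}$, or $s_{i_j}s_{i_{j+1}} = s_{i_{j+1}}s_{i_j}$ — is literally the same group element under this coupling. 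Therefore $v_p(\aa^K) = v_p(\aa'^K)$ as random elements of $W$ under this coupling, so they have identical distributions, and in particular $\sigma_\aa^2 = \sigma_{\aa'}^2$.

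I expect the commutation-move case to be essentially immediate once the coupling is spelled out, so the main (mild) obstacle is making the cyclic-rotation argument airtight. The subtlety is that cyclic rotation genuinely does change the distribution of $v_p(\bb^K)$ for finite $K$ — it is only the $\sqrt K$-rescaled limit that is invariant — so one cannot argue by exact equality of distributions as in the commutation case. I would handle this either via the ``$O(1)$ perturbation after rescaling'' remark above (cleanest, citing \cref{thm:normal} and the fact that a fixed translate of an alcove is a bounded perturbation) or via the Markov-chain renewal structure and the trace formula \eqref{eq:sigma_b_trace}, verifying that the excursion law of $(T_\id^\id, D_\id^\id)$ is genuinely independent of the cyclic anchor point. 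Either route is short; I would present the perturbation argument in the main text and relegate the renewal-theoretic justification to a parenthetical remark.
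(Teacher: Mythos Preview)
Your reduction to single commutation moves and single cyclic rotations, and your handling of the commutation case via coupling, are correct and match the paper. The gap is in the cyclic-rotation case.

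Under the natural coupling of $\sub_p(\aa^K)$ with $\sub_p((\aa')^K)$ (same retention choices for corresponding positions), the two random group elements are either equal (if the rotated position is deleted) or conjugate by $s_{i_1}$: $v_p((\aa')^K) = s_{i_1}\, v_p(\aa^K)\, s_{i_1}$. Your approach (b) asserts this is ``an $O(1)$ perturbation of the alcove,'' but that is false: \emph{right} multiplication by $s_{i_1}$ acts on $V$ as reflection through a fixed hyperplane, so the centroids $u^\bullet$ and $(s_{i_1} u s_{i_1})^\bullet$ can be arbitrarily far apart. One can repair this by observing that, after dividing by $\sqrt K$, the right multiplication becomes the orthogonal map $P_{\overline s_{i_1}}$ (plus $o(1)$), and then invoking the spherical symmetry of $\NN(0,\sigma^2 \II_r)$ to conclude that the limit is unchanged --- but that extra step is exactly what is missing from your sketch, and it is not an ``$O(1)$ perturbation'' argument. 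Your approach (a) is vaguer still: the one-step law of the coarse chain $\mathscr M$ genuinely changes under cyclic rotation (since $v_p(\aa)$ and $v_p(\aa')$ are not identically distributed), and you give no argument that the excursion law of $D_\id^\id$ nonetheless survives unchanged; it is not clear that it does.

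The paper sidesteps all of this by passing to Coxeter length. Under the same coupling, $\ell(v_p(\aa^K))$ and $\ell(v_p((\aa')^K))$ differ by at most $2$, since conjugating by a single simple reflection changes length by at most $2$. Hence $|\EE[\ell(v_p(\aa^K))] - \EE[\ell(v_p((\aa')^K))]| \leq 2$ for every $K$, and then \cref{cor:length} (which expresses $\sigma_\bb$ as a positive multiple of $\lim_{K\to\infty}\EE[\ell(v_p(\bb^K))]/\sqrt K$) gives $\sigma_\aa = \sigma_{\aa'}$ immediately. This avoids reasoning about alcove centroids altogether.
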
 

\begin{proof}
It suffices to prove that $\sigma_{\aa}^2=\sigma_{\aa'}^2$ when $\aa$ is obtained from $\aa'$ by performing a single commutation move or by cyclically rotating by one letter. If $\aa'$ is obtained from $\aa$ by applying a commutation move, then it is straightforward to check that $v_p(\aa^K)$ has the same distribution as $v_p((\aa')^K)$ for each $K\geq 0$, so the desired equality is immediate in this case. Now suppose $\aa$ is obtained from $\aa'$ by cyclically rotating by one letter. Consider a large positive integer $K$. The word $\aa^K$ is obtained by cyclically rotating $(\aa')^K$ by one letter. There is an obvious coupling of $\sub_p(\aa^K)$ and $\sub_p((\aa')^K)$, where a letter in $\aa^K$ appears in $\sub_p(\aa^K)$ if and only if the corresponding letter of $(\aa')^K$ appears in $\sub_p((\aa')^K)$. With this coupling, the Coxeter lengths of $v_p(\aa^K)$ and $v_p((\aa')^K)$ differ by at most $2$. Therefore, the desired equality follows from \cref{cor:length}, which implies that $\sigma_\aa^2$ and $\sigma_{\aa'}^2$ are determined by the asymptotics of the expected values of these Coxeter lengths as $K\to\infty$. 
\end{proof}

\subsection{An Invertible Transformation} 

Let us quickly prove the following lemma, which is necessary to define the vector $\kappa_\bb$ appearing in the statement of \cref{thm:covariance_complicated}. 

\begin{lemma}\label{lem:invertible}
Let $\bb$ be a finite word over $S$ that contains each simple reflection at least once. The linear transformation $\II_V-R_\bb$ of $V$ is invertible. 
\end{lemma}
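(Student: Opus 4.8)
The plan is to show that $\|R_\bb\| < 1$ in the operator norm induced by the inner product $\langle\cdot,\cdot\rangle$ on $V$, which immediately forces $\II_V - R_\bb$ to be invertible (its eigenvalues cannot equal $1$ since any eigenvalue $\mu$ would satisfy $|\mu| \le \|R_\bb\| < 1$). The building blocks are the maps $R_s = (1-p)\II_V + p P_{\overline s}$ for $s \in S$. Since each $\overline s$ is an orthogonal reflection, $P_{\overline s}$ is an orthogonal transformation, so $\|R_s\gamma\| \le (1-p)\|\gamma\| + p\|\gamma\| = \|\gamma\|$ for every $\gamma$; thus each $R_s$ is a contraction, $\|R_s\| \le 1$, and hence $\|R_\bb\| \le 1$ as a product of such maps. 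The work is in showing the inequality is \emph{strict}, and this is where the hypothesis that $\bb$ uses every simple reflection enters.

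First I would pin down exactly when equality $\|R_s\gamma\| = \|\gamma\|$ holds. Expanding,
\[
\|R_s\gamma\|^2 = (1-p)^2\|\gamma\|^2 + 2p(1-p)\langle\gamma, P_{\overline s}\gamma\rangle + p^2\|\gamma\|^2,
\]
and since $\langle\gamma, P_{\overline s}\gamma\rangle \le \|\gamma\|^2$ with equality iff $P_{\overline s}\gamma = \gamma$ (i.e.\ $\gamma$ lies on the reflecting hyperplane $\H_{\alpha,0}$ associated to $s$), we get $\|R_s\gamma\| = \|\gamma\|$ iff $\gamma$ is fixed by $\overline s$. Consequently, writing $\bb = s_{i_m}\cdots s_{i_1}$ so that $R_\bb = R_{s_{i_1}}\cdots R_{s_{i_m}}$, if $\|R_\bb\gamma\| = \|\gamma\|$ then every intermediate vector has the same norm, which forces each $R_{s_{i_j}}$ to act as an isometry on the relevant vector; tracking this from right to left, $\gamma$ must be fixed by $\overline s_{i_m}$, then $R_{s_{i_m}}\gamma = \gamma$, so $\gamma$ must be fixed by $\overline s_{i_{m-1}}$, and so on. Hence $\gamma$ is fixed by $\overline s_i$ for every $i$ appearing in $\bb$, which by hypothesis is all of $S \setminus \{s_0\}$; since the reflecting hyperplanes $\H_{\alpha_i,0}$ for $1 \le i \le r$ have trivial common intersection (the simple roots $\alpha_1,\dots,\alpha_r$ span $V$), we conclude $\gamma = 0$.

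So far this gives $\|R_\bb\gamma\| < \|\gamma\|$ for all nonzero $\gamma$, which already suffices: on the compact unit sphere of $V$ (finite-dimensional), the continuous function $\gamma \mapsto \|R_\bb\gamma\|$ attains a maximum, which is therefore some value $c < 1$, giving $\|R_\bb\| = c < 1$ and hence invertibility of $\II_V - R_\bb$ via the Neumann series $\sum_{n\ge 0} R_\bb^n$. I do not anticipate a genuine obstacle here; the only mild subtlety to state carefully is the right-to-left propagation argument showing that equality in the norm for the \emph{composite} forces the fixed-vector condition at \emph{each} factor — one should phrase this as: if $T_1, T_2$ are contractions and $\|T_1 T_2 \gamma\| = \|\gamma\|$, then $\|T_2\gamma\| = \|\gamma\|$ (from $\|T_1(T_2\gamma)\| \le \|T_2\gamma\| \le \|\gamma\|$) and $\|T_1(T_2\gamma)\| = \|T_2\gamma\|$, and iterate. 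An alternative, essentially equivalent route is to check directly that $1$ is not an eigenvalue: if $R_\bb\gamma = \gamma$ then $\|R_\bb\gamma\| = \|\gamma\|$ and the same analysis forces $\gamma = 0$; this avoids even mentioning the operator norm, though one still wants the norm bound if one later wants the Neumann series or quantitative control.
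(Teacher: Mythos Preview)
Your proof is correct. The paper's argument is close in spirit but structured differently: rather than using the factorization $R_\bb = R_{s_{i_1}}\cdots R_{s_{i_m}}$ as a product of contractions, the paper expands $R_\bb$ as the convex combination $\sum_{J\subseteq[m]} p^{|J|}(1-p)^{m-|J|} P_{\overline s_J}$ of orthogonal maps and then applies equality in the triangle inequality to conclude that all the vectors $P_{\overline s_J}\gamma$ coincide, whence $\gamma$ is fixed by each $\overline s$. Your route via the product structure is a bit more economical, avoids the subset expansion entirely, and yields the strictly stronger conclusion $\|R_\bb\|<1$ (hence Neumann-series convergence), which the paper's eigenvalue-only argument does not directly give. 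For the final step, the paper invokes irreducibility of $V$ as a $\overline W$-representation, whereas you use the equivalent but more concrete fact that the simple roots $\alpha_1,\dots,\alpha_r$ span $V$; either works.
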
 
\begin{proof}
Let $\bb=s_{i_{m}}\cdots s_{i_1}$, and for ${J=\{j_1<\cdots<j_k\}\subseteq[m]}$, let $\overline s_J=\overline s_{i_{j_k}}\cdots\overline s_{i_{j_1}}$. Let $\gamma\in V$ be such that $(\II_V-R_\bb)\gamma=0$; we will prove that $\gamma=0$. For each $w\in\overline W$, the linear transformation $P_w$ is orthogonal, so $\norm{P_w\gamma}=\norm{\gamma}$. Hence,  
\begin{align*}
\norm{\gamma}&=\norm{R_\bb\gamma} \\ 
&=\norm{\sum_{J\subseteq[m]}p^{|J|}(1-p)^{m-|J|}P_{\overline{s}_J}\gamma} \\ 
&\leq \sum_{J\subseteq[m]}p^{|J|}(1-p)^{m-|J|}\norm{P_{\overline{s}_J}\gamma} \\ 
&= \sum_{J\subseteq[m]}p^{|J|}(1-p)^{m-|J|}\norm{\gamma} \\ 
&=\norm{\gamma}. 
\end{align*}
This inequality must be an equality, so all of the vectors $P_{\overline s_J}\gamma$ for $J\subseteq[m]$ must be equal. Since every simple reflection appears in $\bb$, this implies that $\gamma=P_{\overline s}\gamma$ for every $s\in S$. Consequently, $\gamma=P_{w}\gamma$ for every $w\in\overline W$. As $V$ is an irreducible representation of $\overline W$, we deduce that $\gamma=0$. 
\end{proof}

\subsection{A General Formula} 
We now wish to prove \cref{thm:covariance_complicated}, which provides a general formula for computing $\sigma_\bb^2$. If $\bb$ does not contain all of the simple reflections in $S$, then the subgroup of $W$ generated by the simple reflections in $\bb$ is finite, so $\sigma_\bb^2=0$. Hence, we will assume that $\bb$ contains each simple reflection at least once. 

Recall that we write $\bb=s_{i_m}\cdots s_{i_1}$ and let 
\begin{align*}
\Z&=\{j\in[m]:i_j=0\}, \\ 
\GG_J&=\sum_{k\in J\cap\Z}{P}_{\overline s_{J\cap[k-1]}}\theta^\vee, \\ 
\chi_\bb&=\sum_{J\subseteq[m]}p^{|J|}(1-p)^{m-|J|}\norm{\GG_J}^2, \\ 
\ED_\bb&=|\overline W|(\II_V-R_\bb)^{-1}\sum_{J\subseteq[m]}p^{|J|}(1-p)^{m-|J|}\GG_J. 
\end{align*}

\begin{proposition}\label{Prop:Master} 
Let $U$ be a real vector space, and let $f\colon V\to U$ be a function. For all $x\in \overline W$, we have  
\[\EE[f(D_\id^{x})]=\sum_{J\subseteq[m]}p^{|J|}(1-p)^{m-|J|}\EE[f(\delta_{\overline s_J\neq x}D_{\overline s_J}^x+\GG_J)].\] 
\end{proposition}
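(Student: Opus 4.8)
The plan is to condition on the first "block" of $\bb$ — that is, on which letters of the leftmost copy of $\bb$ survive in the random subword — and then track how the resulting partial product moves us through the cosets of $\Lambda$ in $W$. Recall that $D_\id^x$ is (in distribution) the total displacement $\xx(u') - \xx(u)$ obtained by starting from some $u \in \id\cdot\Lambda$ and repeatedly left-multiplying by independent copies of $v_p(\bb)$ until the first time we land in $x\cdot\Lambda$, using at least one copy. So I would start with $u = \id$, draw one copy of $v_p(\bb)$, call it $v$, and split the computation according to whether $\overline v$ (a random element of $\overline W$, equal to $\overline s_J$ where $J\subseteq[m]$ is the random surviving set) already equals $x$ or not.

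First I would compute the displacement contributed by the single copy $v = v_p(\bb)$ itself, conditioned on the surviving set being $J$. Writing $\bb = s_{i_m}\cdots s_{i_1}$, the product of the surviving letters is $s_J = s_{i_{j_M}}\cdots s_{i_{j_1}}$ (with $J = \{j_1 < \cdots < j_M\}$), and I need $\xx(s_J)$. Using the semidirect product structure $W = \overline W \ltimes \Lambda$ and the key identity $s_0 = \overline s_0 \tau_{-\theta^\vee}$ (together with $s_i = \overline s_i$ for $i \ge 1$), I would expand the product $s_{i_{j_M}}\cdots s_{i_{j_1}}$ from right to left, pushing each translation factor to the right past the linear factors to its right. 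Each occurrence of $s_0$ among the surviving letters — i.e., each $k \in J \cap \Z$ — contributes a translation by $-\theta^\vee$, which, after being conjugated past the linear part coming from the surviving letters in positions $j_1, \ldots$ strictly below $k$, becomes a translation by $-P_{\overline s_{J\cap[k-1]}}\theta^\vee$. Summing these contributions gives exactly $\xx(s_J) = -\GG_J$ (up to the sign convention $\gamma\tau_\eta = \gamma - \eta$; I'd double-check the sign, but the upshot is that the displacement from this first block is $\GG_J$). This is the place where the precise definition of $\GG_J$ gets "reverse-engineered," and getting the conjugation order right is the step I'd be most careful about.

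Next I would handle the "remaining journey." After the first copy, we are at an element lying in the coset $\overline s_J \cdot \Lambda$. If $\overline s_J = x$ we are done — the total displacement is just $\GG_J$, contributing the $\delta_{\overline s_J \ne x}$-vanishing term with the $\GG_J$ summand retained. If $\overline s_J \ne x$, then by the definition of $D$ (and the fact that its distribution depends only on the source and target cosets, not the representative) the additional displacement needed to reach $x\cdot\Lambda$ has the same distribution as $D_{\overline s_J}^x$, and it is conditionally independent of the first block given $\overline s_J$. Hence the total displacement $D_\id^x$ has, conditioned on $J$, the same distribution as $\delta_{\overline s_J \ne x} D_{\overline s_J}^x + \GG_J$ (with $D_{\overline s_J}^x$ independent of everything already used). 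Applying $f$ and taking expectations, then averaging over $J$ with the weight $p^{|J|}(1-p)^{m-|J|}$ — which is exactly $\PP[\text{surviving set} = J]$ — yields the claimed identity.

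I would organize the write-up as: (1) recall the coupling description of $D_\id^x$ from the paragraph preceding Lemma~\ref{lem:exponential_tail}; (2) a short computation establishing $\xx(s_J) = \GG_J$ (modulo sign) via the semidirect product, isolated as a displayed calculation; (3) the conditioning/Markov argument splitting on $\overline s_J \stackrel{?}{=} x$ and invoking that the law of $D_w^{w'}$ depends only on $w, w'$; (4) take expectations of $f(\cdot)$ and sum over $J$. The main obstacle is purely bookkeeping: pinning down the exact form of $\GG_J$ — in particular which partial product $\overline s_{J\cap[k-1]}$ conjugates the $k$-th translation, and the overall sign — since this must match the definition used later in \cref{thm:covariance_complicated}; everything else is a routine application of the Markov/renewal structure already set up for the chain $\mathscr M$.
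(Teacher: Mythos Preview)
Your proposal is correct and follows exactly the same approach as the paper: condition on the surviving set $J_0$ of the first copy of $\bb$, observe that $\xi(u_1)=\nu_{J_0}$, and then invoke the Markov property (plus the fact that the law of $D_w^{w'}$ depends only on $w,w'$) to identify the remaining displacement as $\delta_{\overline s_J\neq x}D_{\overline s_J}^x$. The paper's proof is extremely terse---it simply asserts $\xi(u_1)=\nu_{J_0}$ and says the result follows---whereas you spell out the semidirect-product computation verifying $\xi(s_J)=\nu_J$ and make the Markov/renewal step explicit; but the underlying argument is identical.
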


\begin{proof}
Note that $u_1=s_{J_0}$, where $J_0\subseteq[m]$ is obtained by including each element of $[m]$ in $J_0$
independently with probability $p$. For each $J\subseteq[m]$, we have $\PP[J_0=J]=p^{|J|}(1-p)^{m-|J|}$. The desired result follows from the observation that $\xi(u_1)=\nu_{J_0}$. 
\end{proof} 

\begin{lemma}\label{lem:sum_E}
We have 
\[\sum_{x\in\overline W}\EE[D_{\id}^x]=\ED_\bb.\] 
\end{lemma}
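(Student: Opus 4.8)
The plan is to sum the identity from \cref{Prop:Master} over all $x\in\overline W$ and solve the resulting linear equation for $\sum_{x}\EE[D_\id^x]$. Take $U=V$ and $f=\mathrm{id}_V$ in \cref{Prop:Master}, so that for each $x\in\overline W$ we have
\[\EE[D_\id^x]=\sum_{J\subseteq[m]}p^{|J|}(1-p)^{m-|J|}\left(\delta_{\overline s_J\neq x}\,\EE[D_{\overline s_J}^x]+\GG_J\right).\]
Now sum over $x\in\overline W$ and interchange the two (finite) sums. The term $\sum_J p^{|J|}(1-p)^{m-|J|}\GG_J$ contributes $|\overline W|$ copies of itself, giving $|\overline W|\sum_J p^{|J|}(1-p)^{m-|J|}\GG_J$. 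For the first term, I would fix $J$ and examine $\sum_{x\in\overline W}\delta_{\overline s_J\neq x}\,\EE[D_{\overline s_J}^x]$: since $\EE[D_x^x]=0$ by \eqref{eq:ED11=0}, the indicator $\delta_{\overline s_J\neq x}$ is harmless and we may drop it, so this equals $\sum_{x\in\overline W}\EE[D_{\overline s_J}^x]$.

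The key step is then to recognize that $\sum_{x\in\overline W}\EE[D_{\overline s_J}^x]=P_{\overline s_J}\sum_{x\in\overline W}\EE[D_\id^x]$. This uses \eqref{eq:group_translate_D}, which gives $D_{w}^{w'}x\sim D_{wx}^{w'x}$; applied with $w=\id$, $w'=x'$, and the group element $\overline s_J$, it yields $D_\id^{x'}\overline s_J\sim D_{\overline s_J}^{x'\overline s_J}$, i.e.\ $\EE[D_{\overline s_J}^{x'\overline s_J}]=P_{\overline s_J}\EE[D_\id^{x'}]$ (recall $P_w\gamma=\gamma w$). As $x'$ ranges over $\overline W$, so does $x=x'\overline s_J$, hence $\sum_{x}\EE[D_{\overline s_J}^x]=\sum_{x'}\EE[D_{\overline s_J}^{x'\overline s_J}]=P_{\overline s_J}\sum_{x'}\EE[D_\id^{x'}]$. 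Writing $E=\sum_{x\in\overline W}\EE[D_\id^x]$, the summed identity becomes
\[E=\sum_{J\subseteq[m]}p^{|J|}(1-p)^{m-|J|}P_{\overline s_J}E+|\overline W|\sum_{J\subseteq[m]}p^{|J|}(1-p)^{m-|J|}\GG_J.\]
By the definition of $R_\bb$ and the fact (used already in the proof of \cref{lem:invertible}) that $R_\bb=\sum_{J\subseteq[m]}p^{|J|}(1-p)^{m-|J|}P_{\overline s_J}$, the first sum on the right is exactly $R_\bb E$. So $(\II_V-R_\bb)E=|\overline W|\sum_J p^{|J|}(1-p)^{m-|J|}\GG_J$, and since $\II_V-R_\bb$ is invertible by \cref{lem:invertible}, we get $E=|\overline W|(\II_V-R_\bb)^{-1}\sum_J p^{|J|}(1-p)^{m-|J|}\GG_J=\ED_\bb$, as claimed.

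The one point requiring a little care — and the likely main obstacle — is verifying that $R_\bb=\sum_{J\subseteq[m]}p^{|J|}(1-p)^{m-|J|}P_{\overline s_J}$, i.e.\ that the "reversed product" definition $R_\bb=R_{s_{i_1}}\cdots R_{s_{i_m}}$ with $R_s=(1-p)\II_V+p P_{\overline s}$ expands into this sum. Expanding the product of the $m$ binomials $R_{s_{i_j}}$, each term picks, for every index $j\in[m]$, either the $(1-p)\II_V$ summand or the $pP_{\overline s_{i_j}}$ summand; choosing the $P$-summand exactly on a set $J\subseteq[m]$ contributes $p^{|J|}(1-p)^{m-|J|}$ times the product of the $P_{\overline s_{i_j}}$ for $j\in J$ taken in the appropriate order, which (because the factors in $R_\bb$ are written right-to-left relative to $\bb$) is precisely $P_{\overline s_J}$ with $\overline s_J=\overline s_{i_{j_M}}\cdots\overline s_{i_{j_1}}$. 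This is the same bookkeeping already implicitly used in \cref{lem:invertible}, so it can be dispatched quickly. Everything else is routine rearrangement of finite sums together with the two invariance properties \eqref{eq:ED11=0} and \eqref{eq:group_translate_D}.
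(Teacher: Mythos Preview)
Your proof is correct and follows essentially the same route as the paper's: apply \cref{Prop:Master} with $f=\mathrm{id}_V$, sum over $x\in\overline W$, use \eqref{eq:ED11=0} to remove the indicator and \eqref{eq:group_translate_D} to turn $\sum_x\EE[D_{\overline s_J}^x]$ into $P_{\overline s_J}\sum_x\EE[D_\id^x]$, then recognize $R_\bb$ and invert via \cref{lem:invertible}. The expansion $R_\bb=\sum_{J}p^{|J|}(1-p)^{m-|J|}P_{\overline s_J}$ you verify is exactly the one already used in the proof of \cref{lem:invertible}, so no extra work is needed there.
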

\begin{proof}
Applying \cref{Prop:Master} with $U=V$ and the identity map $f\colon V\to V$ yields that 
\begin{align*}
\sum_{x\in\overline W}\EE[D_\id^x]&=\sum_{x\in\overline W}\sum_{J\subseteq [m]}p^{|J|}(1-p)^{m-|J|}
\EE[\delta_{\overline s_J\neq x}D_{\overline s_J}^x+\nu_J] \\ 
&=\sum_{J\subseteq [m]}p^{|J|}(1-p)^{m-|J|}
\left(\sum_{x\in\overline W}\EE[\delta_{\overline s_J\neq x}D_{\overline s_J}^x]+|\overline W|\nu_J\right). 
\end{align*}
For every $J\subseteq[m]$, it follows from \eqref{eq:group_translate_D} and \eqref{eq:ED11=0} that 
\begin{align*}
\sum_{x\in\overline W}\EE[\delta_{\overline s_J\neq x}D_{\overline s_J}^x]&=\sum_{x\in\overline W}\EE[D_{\overline s_J}^x] \\ 
&=\sum_{x\in\overline W}\EE[D_{\overline s_J}^{x\overline s_J}] \\ 
&=P_{\overline s_J}\sum_{x\in\overline W}\EE[D_{\id}^{x}].
\end{align*}
We conclude that 
\begin{align*}
\sum_{x\in\overline W}\EE[D_{\id}^x]&=\sum_{J\subseteq[m]}p^{|J|}(1-p)^{m-|J|}P_{\overline s_J}\sum_{x\in \overline W}\EE[D_\id^x]+|\overline W|\sum_{J\subseteq[m]}p^{|J|}(1-p)^{m-|J|}\nu_J \\ 
&=R_\bb\sum_{x\in \overline W}\EE[D_\id^x]+|\overline W|\sum_{J\subseteq[m]}p^{|J|}(1-p)^{m-|J|}\nu_J, 
\end{align*}
so 
\begin{align*}\sum_{x\in \overline W}\EE[D_\id^x]&=|\overline W|(\II_V-R_\bb)^{-1}\sum_{J\subseteq[m]}p^{|J|}(1-p)^{m-|J|}\nu_J \\ 
&=\kappa_\bb,
\end{align*}
as desired. 
\end{proof} 

For $w,x\in\overline W$, let \[\Gamma_x^{w}=\EE[\Tr(D_x^{w}(D_x^w)^\top)].\] We know by \eqref{eq:ED11=0} that $\EE[D_\id^\id]=0$, so we can use \eqref{eq:sigma_b_trace} to see that 
\begin{equation}\label{eq:sigma_Gamma}
\sigma_\bb^2=\frac{1}{r|\overline W|}\Tr(\Cov(D_\id^\id))=\frac{1}{r|\overline W|}\Gamma_\id^\id.
\end{equation} 

\begin{proof}[Proof of \cref{thm:covariance_complicated}]
For $w,x\in\overline W$, \eqref{eq:group_translate_D} tells us that $D_x^w$ has the same distribution as ${P}_xD_\id^{wx^{-1}}$, so 
\begin{align}
\nonumber \Gamma_x^w&=\EE[\Tr(D_x^{w}(D_x^w)^\top)] \\ 
\nonumber &=\EE[\Tr(({P}_xD_\id^{wx^{-1}})({P}_xD_\id^{wx^{-1}})^\top)] \\ 
\nonumber &=\EE[\Tr({P}_xD_\id^{wx^{-1}}(D_\id^{wx^{-1}})^\top{P}_x^{-1})] \\ 
\nonumber &=\EE[\Tr(D_\id^{wx^{-1}}(D_\id^{wx^{-1}})^\top)] \\ 
&=\Gamma_{\id}^{xw^{-1}}.
\label{eq:translate_Gamma}
\end{align}

Let $U=\mathbb R^{r\times r}$, and define $f\colon V\to U$ by $f(\gamma)=\gamma\gamma^\top$. Invoking \cref{Prop:Master}, we find that $\EE[D_\id^x(D_\id^x)^\top]$ is equal to
\[
\sum_{J\subseteq[m]}p^{|J|}(1-p)^{m-|J|}\left[\delta_{\overline s_J=x}\EE[D_{\overline s_J}^x(D_{\overline s_J}^x)^\top]+\delta_{\overline s_J=x}\EE[D_{\overline s_J}^x]\GG_J^\top+\delta_{\overline s_J=x}\EE[(D_{\overline s_J}^x)^\top]\GG_J+\GG_J\GG_J^\top\right].
\] Taking the trace and applying \eqref{eq:translate_Gamma} yields that 
\begin{align}
\nonumber \Gamma_\id^x&=\!\sum_{J\subseteq[m]}p^{|J|}(1-p)^{m-|J|}\!\left[\delta_{\overline s_J=x}\Gamma_{\id}^{x\overline s_J^{-1}}\!\!+\!\delta_{\overline s_J=x}\Tr(\EE[D_{\overline s_J}^x]\GG_J^\top)\!+\!\delta_{\overline s_J=x}\Tr(\EE[(D_{\overline s_J}^x)^\top]\GG_J)\!+\!\Tr(\GG_J\GG_J^\top)\right] \\ 
\nonumber &=\!\sum_{J\subseteq[m]}p^{|J|}(1-p)^{m-|J|}\!\left[\delta_{\overline s_J=x}\Gamma_{\id}^{x\overline s_J^{-1}}\!\!+2\delta_{\overline s_J=x}\Tr(\EE[D_{\overline s_J}^x]\GG_J^\top)+\langle\GG_J,\GG_J\rangle\right] \\ 
&\label{eq:computations1}=\!\sum_{J\subseteq[m]}p^{|J|}(1-p)^{m-|J|}\!\left[\delta_{\overline s_J=x}\Gamma_{\id}^{x\overline s_J^{-1}}\!\!+2\delta_{\overline s_J=x}\Tr(\EE[D_{\overline s_J}^x]\GG_J^\top)\right]+\chi_\bb. 
\end{align} 
The key trick is now to sum \eqref{eq:computations1} over $x\in\overline W$: 
\begin{align*}
\sum_{x\in\overline W}\Gamma_\id^x&=\sum_{x\in\overline W}\left(\sum_{J\subseteq[m]}p^{|J|}(1-p)^{m-|J|}\left[\delta_{\overline s_J=x}\Gamma_{\id}^{x\overline s_J^{-1}}+2\delta_{\overline s_J=x}\Tr(\EE[D_{\overline s_J}^x]\GG_J^\top)\right]+\chi_\bb\right) \\ &=\sum_{J\subseteq[m]}p^{|J|}(1-p)^{m-|J|}\left[\left(\sum_{x\in\overline W}\Gamma_{\id}^{x\overline s_J^{-1}}-\Gamma_{\id}^{\id}\right)+2\sum_{x\in\overline W\setminus\{\overline s_J\}}\Tr(\EE[D_{\overline s_J}^x]\GG_J^\top)\right]+|\overline W|\chi_\bb. 
\end{align*}
Since $\EE[D_{\overline s_J}^{\overline s_J}]=0$ by \eqref{eq:ED11=0}, we have \[\sum_{x\in\overline W\setminus\{\overline s_J\}}\Tr(\EE[D_{\overline{s}_J}^x]\GG_J^\top)=\sum_{x\in\overline W}\Tr(\EE[D_{\overline{s}_J}^x]\GG_J^\top).\] Also, $\sum_{x\in\overline W}\Gamma_{\id}^{x\overline s_J^{-1}}=\sum_{x\in\overline W}\Gamma_{\id}^x$. Therefore, 
\begin{align*}
\sum_{x\in\overline W}\Gamma_{\id}^x&=\sum_{J\subseteq[m]}p^{|J|}(1-p)^{m-|J|}\left[\left(\sum_{x\in\overline W}\Gamma_{\id}^{x}-\Gamma_{\id}^{\id}\right)+2\sum_{x\in\overline W}\Tr(\EE[D_{\overline s_J}^x]\GG_J^\top)\right]+|\overline W|\chi_\bb \\ 
&= \sum_{x\in\overline W}\Gamma_{\id}^{x}-\Gamma_{\id}^{\id}+2\sum_{J\subseteq[m]}p^{|J|}(1-p)^{m-|J|}\sum_{x\in\overline W}\Tr(\EE[D_{\overline s_J}^x]\GG_J^\top)+|\overline W|\chi_\bb \\ 
&= \sum_{x\in\overline W}\Gamma_{\id}^{x}-\Gamma_{\id}^{\id}+2\sum_{J\subseteq[m]}p^{|J|}(1-p)^{m-|J|}\Tr\left(\sum_{x\in\overline W}\EE\left[{P}_{\overline s_J}D_{\id}^{x\overline s_J^{-1}}\right]\GG_J^\top\right)+|\overline W|\chi_\bb \\ 
&= \sum_{x\in\overline W}\Gamma_{\id}^{x}-\Gamma_{\id}^{\id}+2\sum_{J\subseteq[m]}p^{|J|}(1-p)^{m-|J|}\Tr\left({P}_{\overline s_J}\sum_{x\in\overline W}\EE\left[D_{\id}^{x}\right]\GG_J^\top\right)+|\overline W|\chi_\bb \\ 
&= \sum_{x\in\overline W}\Gamma_{\id}^{x}-\Gamma_{\id}^{\id}+2\sum_{J\subseteq[m]}p^{|J|}(1-p)^{m-|J|}\Tr\left({P}_{\overline s_J}\ED_\bb\GG_J^\top\right)+|\overline W|\chi_\bb,
\end{align*}
where the last equation follows from \cref{lem:sum_E}. 
Thankfully, we can now cancel $\sum_{x\in\overline W}\Gamma_{\id}^x$ to find that 
\begin{align*} 
\Gamma_{\id}^{\id}&=2\sum_{J\subseteq[m]}p^{|J|}(1-p)^{m-|J|}\Tr\left({P}_{\overline s_J}\ED_\bb\GG_J^\top\right)+|\overline W|\chi_\bb \\ 
&=2\sum_{J\subseteq[m]}p^{|J|}(1-p)^{m-|J|}\langle\GG_J,{P}_{\overline s_J}\ED_\bb\rangle+|\overline W|\chi_\bb.
\end{align*}
The formula for $\sigma_\bb^2$ in the statement of the theorem now follows from \eqref{eq:sigma_Gamma}.  
\end{proof} 

\begin{remark}\label{rem:continuous}
For each fixed $J\subseteq[m]$, it follows from the definitions of $\GG_J$, ${P}_{\overline s_J}$, and $\ED_\bb$ that $\langle\GG_J,{P}_{\overline s_J}\ED_\bb\rangle$ is a rational function in $p$ for $p\in(0,1)$. For each fixed $p\in(0,1)$, it follows from \cref{lem:sum_E} that $\langle\GG_J,{P}_{\overline s_J}\ED_\bb\rangle$ is finite. Hence, $\langle\GG_J,{P}_{\overline s_J}\ED_\bb\rangle$ is continuous in $p$ for $p\in(0,1)$. It is also clear that $\chi_\bb$ is continuous in $p$. Therefore, it follows from \cref{thm:covariance_complicated} that $\sigma_\bb^2$ is a continuous function of $p$ for $p\in(0,1)$. 
\end{remark} 

\subsection{A Simpler Formula} 

We can now proceed to prove \cref{thm:covariance_simple}, which gives a very simple formula for $\sigma_\bb$ when $\bb$ contains exactly one occurrence of the simple reflection $s_0$. 

\begin{proof}[Proof of \cref{thm:covariance_simple}]
By \cref{lem:invariance}, it suffices to prove the theorem in the case where $\bb=\aa$ (i.e., the rightmost letter of $\bb$ is $s_0$). Under this assumption, we have $R_\aa=R_\bb$ and $\Z=\{1\}$. For $J\subseteq[m]$, we have $\GG_J=\delta_{1\in J}\theta^\vee$. Hence, 
\[\chi_\bb=\sum_{J\subseteq[m]}p^{|J|}(1-p)^{m-|J|}\delta_{1\in J}\norm{\theta^\vee}^2=p\norm{\theta^\vee}^2. \]
Moreover, 
\begin{equation}\label{eq:varrho_simplified}
\ED_\bb=|\overline W|(\II_V-R_\bb)^{-1}\sum_{J\subseteq[m]}p^{|J|}(1-p)^{m-|J|}\delta_{1\in J}\theta^\vee=p|\overline W|(\II_V-R_\bb)^{-1}\theta^\vee. 
\end{equation} 
For $J\subseteq[m]$, let $F(J)=p^{|J|}(1-p)^{m-|J|}\langle\theta^\vee,{P}_{\overline s_J}\ED_\bb\rangle$. According to \cref{thm:covariance_complicated}, we have 
\begin{equation}\label{eq:sigma_in_proof}
\sigma_\bb^2=\frac{1}{r}\left(\frac{2}{|\overline W|}\sum_{1\in J\subseteq [m]}F(J)+p\norm{\theta^\vee}^2\right). 
\end{equation} 

We know by \cref{rem:continuous} that $\sigma_\bb^2$ is a continuous function of $p$ for $p\in(0,1)$. Therefore, in order to prove the theorem, it suffices to prove it under the assumption that $p\neq 1/2$; we will make this assumption in what follows. 

We will use a trick to simplify \eqref{eq:sigma_in_proof} further. Observe that 
\begin{align*}
\sum_{1\in J\subseteq[m]}F(J)&=\sum_{1\in J\subseteq[m]}p^{|J|}(1-p)^{m-|J|}\langle\theta^\vee,{P}_{\overline s_J}\ED_\bb\rangle \\ 
&=\sum_{1\not\in J'\subseteq[m]}p^{|J'|+1}(1-p)^{m-|J'|-1}\langle\theta^\vee,{P}_{\overline s_0}{P}_{\overline s_{J'}}\ED_\bb\rangle \\ 
&=\frac{p}{1-p}\sum_{1\not\in J'\subseteq[m]}p^{|J'|}(1-p)^{m-|J'|}\langle{P}_{\overline s_0}\theta^\vee,{P}_{\overline s_{J'}}\ED_\bb\rangle,
\end{align*} where we have used the fact that ${P}_{\overline s_0}$ is an orthogonal involution. Now, ${P}_{\overline s_0}\theta^\vee$ is the vector obtained by reflecting $\theta^\vee$ through the linear hyperplane $\H_{\theta}^0$, which is orthogonal to $\theta^\vee$. Hence, ${P}_{\overline s_0}\theta^\vee=-\theta^\vee$. It follows that 
\begin{align*}
\sum_{1\in J\subseteq[m]}F(J)&=-\frac{p}{1-p}\sum_{1\not\in J'\subseteq[m]}p^{|J'|}(1-p)^{m-|J'|}\langle\theta^\vee,{P}_{\overline s_{J'}}\ED_\bb\rangle \\ 
&=-\frac{p}{1-p}\sum_{1\not\in J'\subseteq[m]}F(J'),  
\end{align*}
so 
\begin{align*}\sum_{1\in J\subseteq[m]}F(J)&=\frac{p}{2p-1}\left(\sum_{1\in J\subseteq [m]}F(J)-\frac{1-p}{p}\sum_{1\in J\subseteq[m]}F(J)\right) \\ 
&=\frac{p}{2p-1}\left(\sum_{1\in J\subseteq [m]}F(J)+\sum_{1\not\in J'\subseteq[m]}F(J')\right) \\ 
&=\frac{p}{2p-1}\sum_{J\subseteq[m]}F(J) \\ 
&=\frac{p}{2p-1}\left\langle\theta^\vee,\sum_{J\subseteq[m]}p^{|J|}(1-p)^{m-|J|}{P}_{\overline s_J}\ED_\bb\right\rangle \\ 
&=\frac{p}{2p-1}\left\langle\theta^\vee,R_{\bb}\ED_\bb\right\rangle.
\end{align*} We can now use \eqref{eq:varrho_simplified} to see that $R_\bb\ED_\bb=\ED_\bb-(\II_V-R_\bb)\ED_\bb=\ED_\bb-p|\overline W|\theta^\vee$. Hence, 
\[\sum_{1\in J\subseteq[m]}F(J)=\frac{p}{2p-1}\left(\langle\theta^\vee,\ED_\bb\rangle-p|\overline W|\norm{\theta^\vee}^2\right).\] Substituting this into \eqref{eq:sigma_in_proof} and using the formula for $\ED_\bb$ in \eqref{eq:varrho_simplified} yields that 
\begin{align*}\sigma_\bb^2&=\frac{1}{r}\left(\frac{2}{|\overline W|}\frac{p}{2p-1}\left(\langle\theta^\vee,\ED_\bb\rangle-p|\overline W|\norm{\theta^\vee}^2\right)+p\norm{\theta^\vee}^2\right) \\ 
&=\frac{1}{r}\left(\frac{2}{|\overline W|}\frac{p}{2p-1}\langle\theta^\vee,p|\overline W|(\II_V-R_\bb)^{-1}\theta^\vee\rangle-\frac{p}{2p-1}\norm{\theta^\vee}^2\right) \\ 
&=\frac{1}{r}\frac{p}{2p-1}\langle\theta^\vee,(2p(\II_V-R_\bb)^{-1}-\II_V)\theta^\vee\rangle, 
\end{align*}
as desired. 
\end{proof}

\section{Explicit Computations}\label{sec:explicit}

Assume throughout this section that $\bb$ is a Coxeter word. We aim to prove \cref{thm:table}, which gives explicit formulas for $\sigma_\bb^2$ and $\lim_{K\to\infty}\EE[\ell(u_K)]/\sqrt{K}$. Throughout this section, let $e_i$ denote the $i$-th standard basis vector of the Euclidean space $\mathbb R^n$. We will view $V$ as a certain $r$-dimensional Euclidean subspace of $\mathbb R^n$. 

We consider the different families of affine Weyl groups separately. We have chosen to present the four infinite families in the order $\widetilde C,\widetilde B,\widetilde D,\widetilde A$, which might appear strange since it is not alphabetical. Our rationale is that we believe this is order presents the computations in roughly increasing order of complexity. 

\begin{remark}\label{rem:flip}
Given a Coxeter word $\bb$, we obtain an acyclic orientation $\mathrm{ao}(\bb)$ of the Coxeter graph of $W$ by directing each edge $\{s,s'\}$ from $s$ to $s'$ if $s$ appears to the left of $s'$ in $\bb$. We can perform a \dfn{flip} on an acyclic orientation of a graph by reversing the directions of all edges incident to a source or a sink. Two acyclic orientations are \dfn{flip equivalent} if one can be obtained from the other by a sequence of flips. It is known \cite[Theorem~1.15]{DMR} that two Coxeter words $\bb$ and $\bb'$ are cyclically commutation equivalent if and only if $\mathrm{ao}(\bb)$ and $\mathrm{ao}(\bb')$ are flip equivalent. It is also well known that all acyclic orientations of a tree are flip equivalent. When $W$ is not of type $\widetilde A$, its Coxeter graph is a tree, so we only need to consider one particular Coxeter word $\bb$. When $W$ is of type $\widetilde A_r$ for $r\geq 2$, its Coxeter graph is a cycle. Luckily, the flip equivalence classes of acyclic orientations of a cycle are easy to describe. Namely, if the cycle is embedded in the plane, then two acyclic orientations are flip equivalent if and only if they have the same number of edges directed clockwise.  
\end{remark}

\subsection{Type \texorpdfstring{$\widetilde C_r$}{widetilde{C}_r}} 
Let $\Phi$ be the root system of type $C_r$, where $r=n\geq 2$. That is, 
\[\Phi=\{\pm e_i\pm e_j:1\leq i<j\leq n\}\cup\{\pm 2e_i:i\in[n]\}.\] Then 
$V=\mathbb R^n$. For $1\leq i\leq n-1$, the $i$-th simple root is $\alpha_i=e_i-e_{i+1}$. Also, $\alpha_n=2e_n$. We have \[\theta^\vee=e_1.\] 
The Coxeter graph of the affine Weyl group $W=\widetilde C_n$ is 
\[\begin{array}{l}\includegraphics[height=0.448cm]{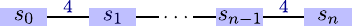}\end{array}.\] 

For $1\leq i\leq n-1$, the linear transformation $P_{s_i}$ swaps the vectors $e_i$ and $e_{i+1}$ and fixes $e_j$ for all ${j\in[n]\setminus\{i,i+1\}}$.  We have $P_{s_n}e_n=-e_n$ and $P_{s_n}e_j=e_j$ for all $j\in[n-1]$. In addition, we have $P_{\overline s_0}e_1=-e_1$ and $P_{\overline s_0}e_j=e_j$ for all $j\in[2,n]$.

Let $\cc=s_n\cdots s_1s_0$. The Coxeter graph of $\widetilde C_n$ is a tree, so \cref{rem:flip} tells us that to prove the formula for $\sigma_\bb^2$ in \cref{table:sigma_b}, it suffices to prove it when $\bb=\cc$. 

For $1\leq k\leq n+1$, let $\aa_k=s_n\cdots s_{n-k+1}$ be the prefix of $\cc$ of length $k$. Let 
\[\zeta=\frac{1}{4np(1-p)}\sum_{i=1}^n((2n-2i+1)(1-p)+p)e_i.\] It is straightforward to prove by induction on $k$ that 
\[R_{\aa_k}\zeta=\zeta-\frac{1}{2n(1-p)}e_{n-k+1}\] for all $1\leq k\leq n$. Therefore, 
\begin{align*}
R_{\cc}\zeta&=R_{s_0}R_{\aa_n}\zeta \\ 
&=R_{s_0}\left(\zeta-\frac{1}{2n(1-p)}e_1\right) \\ 
&=\zeta-e_1.
\end{align*} 
Consequently, 
\[(\II_V-R_\cc)\zeta=e_1=\theta^\vee.\] 

The desired formula for $\sigma_\cc^2$ now follows from \cref{thm:covariance_simple}. Indeed, for $p\neq 1/2$, we have 
\begin{align*}
\sigma_{\cc}^2&=\frac{1}{n}\frac{p}{2p-1}\langle\theta^\vee,(2p(\II_V-R_{\cc})^{-1}-\II_V)\theta^\vee\rangle \\ 
&=\frac{1}{n}\frac{p}{2p-1}\left\langle\theta^\vee,2p\zeta-\theta^\vee\right\rangle \\ 
&=\frac{1}{2n^2}\frac{p}{1-p}. 
\end{align*} 

Finally, we have $\sum_{\beta\in\Phi^+}\norm{\beta}=n(n-1)\sqrt{2}+2n$, so the formula for $\lim_{K\to\infty}\EE[\ell(v_p(\bb^K))]/\sqrt{K}$ in \cref{table:sigma_b} follows from \cref{cor:length} and the formula for $\sigma_\bb^2$.

\subsection{Type \texorpdfstring{$\widetilde B_r$}{widetilde{B}_r}} 
Let $\Phi$ be the root system of type $B_r$, where $r=n\geq 3$. That is, 
\[\Phi=\{\pm e_i\pm e_j:1\leq i<j\leq n\}\cup\{\pm e_i:i\in[n]\}.\] Then 
$V=\mathbb R^n$. For $1\leq i\leq n-1$, the $i$-th simple root is $\alpha_i=e_i-e_{i+1}$. Also, $\alpha_n=e_n$. We have \[\theta^\vee=e_1+e_2.\] 
The Coxeter graph of the affine Weyl group $W=\widetilde B_n$ is 
\[\begin{array}{l}\includegraphics[height=1.220cm]{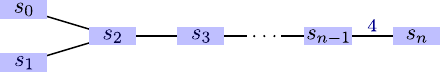}\end{array}.\]  

For $1\leq i\leq n-1$, the linear transformation $P_{s_i}$ swaps the vectors $e_i$ and $e_{i+1}$ and fixes $e_j$ for all ${j\in[n]\setminus\{i,i+1\}}$.  We have $P_{s_n}e_n=-e_n$ and $P_{s_n}e_j=e_j$ for all $j\in[n-1]$. In addition, we have $P_{\overline s_0}e_1=-e_2$, $P_{\overline s_0}e_2=-e_1$, and $P_{\overline s_0}e_j=e_j$ for all $j\in[3,n]$.

Let $\cc=s_n\cdots s_1s_0$. The Coxeter graph of $\widetilde B_n$ is a tree, so \cref{rem:flip} tells us that to prove the formula for $\sigma_\bb^2$ in \cref{table:sigma_b}, it suffices to prove it when $\bb=\cc$. 

For $1\leq k\leq n+1$, let $\aa_k=s_n\cdots s_{n-k+1}$ be the prefix of $\cc$ of length $k$. Let 
\[\zeta=\frac{1}{4(n-1)p(1-p)}\sum_{i=1}^n((2n-2i+1)(1-p)+p)e_i.\] It is straightforward to prove by induction on $k$ that 
\[R_{\aa_k}\zeta=\zeta-\frac{1}{2(n-1)(1-p)}e_{n-k+1}\] for all $1\leq k\leq n$. Therefore, 
\begin{align*}
R_{\cc}\zeta&=R_{s_0}R_{\aa_n}\zeta \\ 
&=R_{s_0}\left(\zeta-\frac{1}{2(n-1)(1-p)}e_1\right) \\ 
&=\zeta-\left(1+\frac{1}{2(n-1)(1-p)}\right)e_1-e_2.
\end{align*}
Since $R_{s_2},\ldots,R_{s_n}$ all fix $e_1$, we also have 
\begin{align*}
R_{\cc}e_1&=R_{s_0}R_{s_1}e_1 \\ 
&=R_{s_0}((1-p)e_1+pe_2) \\ 
&=(1-2p)e_1.
\end{align*} 
Consequently, 
\begin{align*}
(\II_V-R_{\cc})\left(\zeta-\frac{1}{4(n-1)p(1-p)}e_1\right)=e_1+e_2=\theta^\vee.
\end{align*}

The desired formula for $\sigma_\cc^2$ now follows from \cref{thm:covariance_simple}. Indeed, for $p\neq 1/2$, we have 
\begin{align*}
\sigma_{\cc}^2&=\frac{1}{n}\frac{p}{2p-1}\langle\theta^\vee,(2p(\II_V-R_{\cc})^{-1}-\II_V)\theta^\vee\rangle \\ 
&=\frac{1}{n}\frac{p}{2p-1}\left\langle\theta^\vee,2p\left(\zeta-\frac{1}{4(n-1)p(1-p)}e_1\right)-\theta^\vee\right\rangle \\ 
&=\frac{1}{2n(n-1)}\frac{p}{1-p}. 
\end{align*} 

Finally, we have $\sum_{\beta\in\Phi^+}\norm{\beta}=n(n-1)\sqrt{2}+n$, so the formula for $\lim_{K\to\infty}\EE[\ell(v_p(\bb^K))]/\sqrt{K}$ in \cref{table:sigma_b} follows from \cref{cor:length} and the formula for $\sigma_\bb^2$.

\subsection{Type \texorpdfstring{$\widetilde D_r$}{widetilde{D}_r}} 

Let $\Phi$ be the root system of type $D_r$, where $r=n\geq 4$. That is, 
\[\Phi=\{\pm e_i\pm e_j:1\leq i<j\leq n\}.\] Then $V=\mathbb R^n$. For $1\leq i\leq n-1$, the $i$-th simple root is $\alpha_i=e_i-e_{i+1}$. Also, $\alpha_n=e_{n-1}+e_n$. We have \[\theta^\vee=e_1+e_2.\] 

The Coxeter graph of the affine Weyl group $W=\widetilde D_n$ is \[\begin{array}{l}\includegraphics[height=1.220cm]{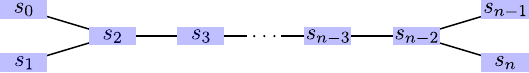}\end{array}.\] 

For $1\leq i\leq n-1$, the linear transformation $P_{s_i}$ swaps the vectors $e_i$ and $e_{i+1}$ and fixes $e_j$ for all $j\in[n]\setminus\{i,i+1\}$. We have $P_{s_n}e_{n-1}=-e_n$, $P_{s_n}e_n=-e_{n-1}$, and $P_{s_n}e_j=e_j$ for all $j\in[n-2]$. In addition, we have $P_{\overline s_0}e_1=-e_2$, $P_{\overline s_0}e_2=-e_1$, and $P_{\overline s_0}e_j=e_j$ for all $j\in[3,n]$. 

Let $\cc=s_n\cdots s_1s_0$. The Coxeter graph of $\widetilde D_n$ is a tree, so \cref{rem:flip} tells us that to prove the formula for $\sigma_\bb^2$ in \cref{table:sigma_b}, it suffices to prove it when $\bb=\cc$. 

For $1\leq k\leq n+1$, let $\aa_k=s_n\cdots s_{n-k+1}$ be the prefix of $\cc$ of length $k$. Let 
\[\zeta=\frac{1}{4(n-2)p(1-p)}\sum_{i=1}^{n-1}((2n-2i-1)(1-p)+p)e_{i}.\] 
One can check that \[R_{s_n}\zeta=\zeta-\frac{1}{4(n-2)(1-p)}(e_{n-1}+e_n).\]
It is then straightforward to prove by induction on $k$ that 
\[R_{\aa_k}\zeta=\zeta-\frac{1}{2(n-2)(1-p)}e_{n-k+1}\] for all $2\leq k\leq n$. Therefore, 
\begin{align*}
R_\cc\zeta&= R_{s_0}R_{\aa_n}\zeta \\ 
&=R_{s_0}\left(\zeta-\frac{1}{2(n-2)(1-p)}e_1\right) \\ 
&=\zeta-\left(1+\frac{1}{2(n-2)(1-p)}\right)e_1-e_2. 
\end{align*}
Since $R_{s_2},\ldots,R_{s_n}$ all fix $e_1$, we also have 
\begin{align*}
R_{\cc}e_1&=R_{s_0}R_{s_1}e_1 \\ 
&=R_{s_0}((1-p)e_1+pe_2) \\ 
&=(1-2p)e_1.
\end{align*} 
Consequently, 
\begin{align*}
(\II_V-R_{\cc})\left(\zeta-\frac{1}{4(n-2)p(1-p)}e_1\right)=e_1+e_2=\theta^\vee.
\end{align*} 

The desired formula for $\sigma_\cc^2$ now follows from \cref{thm:covariance_simple}. Indeed, for $p\neq 1/2$, we have 
\begin{align*}
\sigma_{\cc}^2&=\frac{1}{n}\frac{p}{2p-1}\langle\theta^\vee,(2p(\II_V-R_{\cc})^{-1}-\II_V)\theta^\vee\rangle \\ 
&=\frac{1}{n}\frac{p}{2p-1}\left\langle\theta^\vee,2p\left(\zeta-\frac{1}{4(n-2)p(1-p)}e_1\right)-\theta^\vee\right\rangle \\ 
&=\frac{1}{2n(n-2)}\frac{p}{1-p}. 
\end{align*} 

Finally, we have $\sum_{\beta\in\Phi^+}\norm{\beta}=n(n-1)\sqrt{2}$, so the formula for $\lim_{K\to\infty}\EE[\ell(v_p(\bb^K))]/\sqrt{K}$ in \cref{table:sigma_b} follows from \cref{cor:length} and the formula for $\sigma_\bb^2$. 

\subsection{Type \texorpdfstring{$\widetilde A_r$}{widetilde{A}_r}}\label{subsec:TypeA} 
Let $\Phi$ be the root system of type $A_r$, where $r=n-1\geq 1$. That is, 
\[\Phi=\{e_i-e_j:i,j\in[n],\, i\neq j\}.\] Then 
\[V=\{(\gamma_1,\ldots,\gamma_n)\in\mathbb R^n:\gamma_1+\cdots+\gamma_n=0\}.\] 
For $1\leq i\leq n-1$, the $i$-th simple root is $\alpha_i=e_i-e_{i+1}$. We have \[\theta^\vee=e_1-e_n.\] Consider the affine Weyl group $W=\widetilde A_{n-1}$. If $n\geq 3$, then the Coxeter graph of $W$ is 
\[\begin{array}{l}\includegraphics[height=1.220cm]{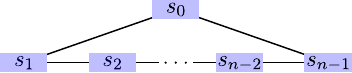}\end{array}.\] If $n=2$, then the Coxeter graph of $W$ is 
\[\begin{array}{l}\includegraphics[height=0.372cm]{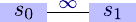}\end{array}.\]

For $0\leq i\leq n-1$, let $\PPP_{s_i}$ be the $n\times n$ permutation matrix corresponding to the transposition that swaps $i$ and $i+1$ (modulo $n$), and let $\RRR_{s_i}=p\PPP_{s_i}+(1-p)\II_{n}$, where $\II_n$ is the $n\times n$ identity matrix. We view $\PPP_{s_i}$ and $\RRR_{s_i}$ as linear transformations of $\mathbb R^n$ that agree with $P_{s_i}$ and $R_{s_i}$, respectively, on the subspace $V$. Given a finite word $\aa=s_{j_k}\cdots s_{j_1}$, let $\PPP_{\aa}=\PPP_{s_{j_1}}\cdots \PPP_{s_{j_k}}$ and $\RRR_{\aa}=\RRR_{s_{j_1}}\cdots \RRR_{s_{j_k}}$. 

For $1\leq d\leq n-1$, let $\cc^{(d)}$ be the word $s_ds_{d+1}\cdots s_{n-1}s_{d-1}\cdots s_1s_0$. If $n\geq 3$, then the Coxeter graph of $W$ is a cycle, so it follows readily from \cref{rem:flip} that every Coxeter word is cyclically commutation equivalent to a word of the form $\cc^{(d)}$. If $n=2$, then the only Coxeter words are $s_0s_1$ and $c^{(1)}=s_1s_0$, which are cyclically commutation equivalent. Therefore, according to \cref{lem:invariance}, proving the formula in \cref{table:sigma_b} for $\sigma_\bb^2$ reduces to proving that 
\begin{equation}\label{eq:type_A_sigma's}\sigma_{\cc^{(1)}}^2=\cdots=\sigma_{\cc^{(n-1)}}^2=\frac{2}{n(n-1)}\frac{p}{1-p}.
\end{equation} 

Fix $1\leq d\leq n-1$. For $1\leq k\leq n$, let $\aa_k$ be the prefix of $\cc^{(d)}$ of length $k$. For example, $\aa_1=s_d$, and $\aa_n=\cc^{(d)}=s_ds_{d+1}\cdots s_{n-1}s_{d-1}\cdots s_1s_0$. Let $\zeta=\sum_{i=1}^nie_i$. For $1\leq k\leq n$, let $\omega_k=\sum_{i=1}^ke_i$. 

\begin{lemma}\label{lem:computations_A}
For each $1\leq d\leq n-1$, we have  
\[(\II_V-R_{\cc^{(d)}})^{-1}\theta^\vee=\frac{1}{np}\left(\left(\frac{n+1}{2}-\frac{dp}{n(1-p)}\right)\omega_n+\frac{p}{1-p}\omega_d-\zeta\right).\]
\end{lemma}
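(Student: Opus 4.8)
The plan is to verify the asserted identity by a direct computation, in exactly the same spirit as the treatments of types $\widetilde C_r$, $\widetilde B_r$, and $\widetilde D_r$ above. Write $\xi_d$ for the vector on the right-hand side of the formula. Since the linear transformation $\II_V-R_{\cc^{(d)}}$ of $V$ is invertible by \cref{lem:invertible}, it suffices to check that $\xi_d\in V$ and that $(\II_V-R_{\cc^{(d)}})\xi_d=\theta^\vee$, equivalently that $R_{\cc^{(d)}}\xi_d=\xi_d-\theta^\vee$. The membership $\xi_d\in V$ is a one-line coordinate-sum check: $\omega_n$, $\omega_d$, and $\zeta$ have coordinate sums $n$, $d$, and $\tfrac{n(n+1)}{2}$, and with the stated coefficients these combine to $0$.

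To compute $R_{\cc^{(d)}}\xi_d$, I would pass to $\mathbb{R}^n$ and use the matrices $\PPP_{s_i}$ and $\RRR_{s_i}$, which agree with $P_{s_i}$ and $R_{s_i}$ on $V$. By the definition of $\RRR_{\aa}$ we have $\RRR_{\cc^{(d)}}=\RRR_{s_0}\RRR_{s_1}\cdots\RRR_{s_{d-1}}\RRR_{s_{n-1}}\RRR_{s_{n-2}}\cdots\RRR_{s_d}$, so applying $\RRR_{\cc^{(d)}}$ to a vector amounts to first performing the ``up-sweep'' $\RRR_{s_d},\RRR_{s_{d+1}},\ldots,\RRR_{s_{n-1}}$ (each $\RRR_{s_i}$ replacing the coordinates in positions $i,i+1$ by the appropriate convex combinations), then the ``down-sweep'' $\RRR_{s_{d-1}},\RRR_{s_{d-2}},\ldots,\RRR_{s_1}$, and finally the ``wrap'' $\RRR_{s_0}$ (mixing positions $1$ and $n$). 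Every permutation matrix fixes $\omega_n=e_1+\cdots+e_n$, hence so does every $\RRR_{s_i}$, and therefore $\RRR_{\cc^{(d)}}\omega_n=\omega_n$; so it is enough to compute $\RRR_{\cc^{(d)}}\omega_d$ and $\RRR_{\cc^{(d)}}\zeta$. As in the previous subsections, I would record the image under each prefix $\aa_k$ of $\cc^{(d)}$ in closed form and prove these formulas by induction on $k$. Concretely, one checks that the up-sweep sends $\zeta=\sum_i ie_i$ to $\zeta+\sum_{i=d}^{n-1}p^{\,i-d+1}e_i-\bigl(p+p^2+\cdots+p^{\,n-d}\bigr)e_n$, that it acts on $\omega_d$ by an analogous geometric redistribution of the coordinates in positions $d,\ldots,n$, that the down-sweep then produces a similar pattern in positions $1,\ldots,d$, and that the wrap only alters two coordinates. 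Summing the resulting geometric series in $p$ gives $\RRR_{\cc^{(d)}}\omega_d$ and $\RRR_{\cc^{(d)}}\zeta$ explicitly.

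With those two vectors in hand, the desired identity $R_{\cc^{(d)}}\xi_d=\xi_d-\theta^\vee$ reduces, once the invariant $\omega_n$-terms cancel, to the single linear identity
\[
\RRR_{\cc^{(d)}}\zeta-\zeta=\frac{p}{1-p}\bigl(\RRR_{\cc^{(d)}}\omega_d-\omega_d\bigr)+np\,(e_1-e_n),
\]
which I would then verify coordinate by coordinate. Note that the coefficient $\tfrac{n+1}{2}-\tfrac{dp}{n(1-p)}$ of $\omega_n$ never enters this comparison: solutions of $(\II_n-\RRR_{\cc^{(d)}})\xi=\theta^\vee$ in $\mathbb{R}^n$ are only determined up to adding multiples of $\omega_n$ (since $\ker(\II_n-\RRR_{\cc^{(d)}})=\mathbb{R}\omega_n$, using that $\RRR_{\cc^{(d)}}$ preserves $V\oplus\mathbb{R}\omega_n$ and restricts to the invertible map $\II_V-R_{\cc^{(d)}}$ on $V$), and this coefficient is the unique one making $\xi_d$ lie in $V$, which was already checked. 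I expect the only real obstacle to be bookkeeping---guessing the correct closed forms for the intermediate vectors after the up-sweep and the down-sweep, tracking the wrap-around at position $n$, and summing several geometric series in $p$ without slips---rather than anything conceptual; in particular no case analysis beyond carrying $d$ as a parameter is needed, and the extreme values $d=1$ (empty down-sweep) and $d=n-1$ (single-letter up-sweep) are covered by the general formulas. Substituting the resulting value of $\langle\theta^\vee,(\II_V-R_{\cc^{(d)}})^{-1}\theta^\vee\rangle$ into \cref{thm:covariance_simple} then yields \eqref{eq:type_A_sigma's}.
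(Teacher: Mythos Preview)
Your proposal is correct and follows essentially the same approach as the paper's proof: both pass to $\mathbb{R}^n$ via the matrices $\RRR_{s_i}$, note that $\omega_n$ is fixed, compute $\RRR_{\aa_k}\zeta$ and $\RRR_{\aa_k}\omega_d$ by induction on the prefix length $k$ (splitting into the up-sweep $1\le k\le n-d$ and the down-sweep $n-d\le k\le n-1$ before the final wrap by $\RRR_{s_0}$), and then verify that the asserted vector lies in $V$ and satisfies $(\II_n-\RRR_{\cc^{(d)}})\upsilon=\theta^\vee$. Your closed form for the up-sweep on $\zeta$ agrees with the paper's \eqref{eq:cc1} at $k=n-d$, and your reduced identity $\RRR_{\cc^{(d)}}\zeta-\zeta=\tfrac{p}{1-p}(\RRR_{\cc^{(d)}}\omega_d-\omega_d)+np(e_1-e_n)$ is exactly what one obtains by combining the paper's \eqref{eq:cc4}, \eqref{eq:cc7}, and \eqref{eq:cc8}.
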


\begin{proof}
We first claim that 
\begin{equation}\label{eq:cc1}
\RRR_{\aa_k}\zeta=\zeta+\sum_{i=0}^{k-1}p^{i+1}e_{d+i}-\frac{p-p^{k+1}}{1-p}e_{d+k}
\end{equation}
for all $1\leq k\leq n-d$. For example, if $n=8$, $d=5$, and $k=2$, then we have 
\begin{align*}\RRR_{\aa_2}\zeta&=\RRR_{s_6}\RRR_{s_5}\zeta \\ &=e_1+e_2+e_3+e_4+(5+p)e_5+(6+p^2)e_6+(7-p-p^2-p^3)e_7+e_8.
\end{align*}
The identity in \eqref{eq:cc1} is straightforward to prove by induction on $k$, so we omit the proof. Next, we claim that 
\begin{equation}\label{eq:cc2}
\RRR_{\aa_k}\zeta=\zeta+\sum_{i=1}^{n-d-1}p^{i+1}e_{d+i}-\sum_{j=1}^{k-n+d}p^{j+1}e_{d+1-j}-\frac{p-p^{n-d+1}}{1-p}e_{n}+\frac{p-p^{k-n+d+2}}{1-p}e_{n-k}
\end{equation}
for all $n-d\leq k\leq n-1$. For example, if $n=8$, $d=5$, and $k=6$, then 
\begin{align*}\RRR_{\aa_6}\zeta&=\RRR_{s_2}\RRR_{s_3}\RRR_{s_4}\RRR_{s_7}\RRR_{s_6}\RRR_{s_5}\zeta \\ 
&=e_1+(2+p+p^2+p^3+p^4)e_2+(3-p^4)e_3+(4-p^3)e_4+(5-p^2)e_5 \\&\,\,\,\,\,\,+(6+p^2)e_6+(7+p^3)e_7+(8-p-p^2-p^3)e_8. 
\end{align*}
As before, we omit the proof of \eqref{eq:cc2} because it is straightforward by induction on $k$ (using \eqref{eq:cc1} for the base case). Setting $k=n-1$ in \eqref{eq:cc2}, we find that 
\begin{align}
\nonumber \RRR_{\cc^{(d)}}\zeta&=\RRR_{s_0}\RRR_{\aa_{n-1}}\zeta \\ 
\nonumber &=\RRR_{s_0}\left(\zeta+\sum_{i=1}^{n-d-1}p^{i+1}e_{d+i}-\sum_{j=1}^{d-1}p^{j+1}e_{d+1-j}-\frac{p-p^{n-d+1}}{1-p}e_{n}+\frac{p-p^{d+1}}{1-p}e_{1}\right) \\ 
\label{eq:cc3} &=\RRR_{s_0}\zeta+\sum_{i=1}^{n-d-1}p^{i+1}e_{d+i}-\sum_{j=1}^{d-1}p^{j+1}e_{d+1-j}-\frac{p-p^{n-d+1}}{1-p}\RRR_{s_0}e_{n}+\frac{p-p^{d+1}}{1-p}\RRR_{s_0}e_{1}.  
\end{align} 
Now, we have 
\[\RRR_{s_0}\zeta=\zeta+(n-1)p(e_1-e_n),\quad \RRR_{s_0}e_n=pe_1+(1-p)e_n,\quad \RRR_{s_0}e_1=pe_n+(1-p)e_1\] 
so simplifying \eqref{eq:cc3} yields that 
\begin{align}
\nonumber (\II_n-\RRR_{\cc^{(d)}})\zeta=\,&-\sum_{i=1}^{n-d-1}p^{i+1}e_{d+i}+\sum_{j=1}^{d-1}p^{j+1}e_{d+1-j} \\ \label{eq:cc4}&-\left(np-\frac{p^2-p^{n-d+2}}{1-p}-p^{d+1}\right)e_1+\left(np-\frac{p^2-p^{d+2}}{1-p}-p^{n-d+1}\right)e_n.
\end{align}

It is straightforward to prove by induction on $k$ that
\begin{equation}\label{eq:cc5}
\RRR_{\aa_k}\omega_d=\omega_{d-1}+(1-p)\sum_{i=0}^{k-1}p^ie_{d+i}+p^ke_{d+k}
\end{equation}
for all $1\leq k\leq n-d$. We can then prove by induction on $k$ (using \eqref{eq:cc5} as a base case) that 
\begin{equation}\label{eq:cc6}
\RRR_{\aa_k}\omega_d=\omega_{d}+(1-p)\left(\sum_{i=1}^{n-d-1}p^ie_{d+i}-\sum_{j=1}^{k-n+d}p^je_{d+1-j}\right)+p^{n-d}e_{n}-p^{k-n+d+1}e_{n-k}  
\end{equation}
for all $n-d\leq k\leq n-1$. Setting $k=n-1$ in \eqref{eq:cc6}, we find that 
\begin{align*}
\RRR_{\cc^{(d)}}\omega_d&=\RRR_{s_0}\RRR_{\aa_{n-1}}\omega_d \\ 
&=\RRR_{s_0}\left(\omega_{d}+(1-p)\left(\sum_{i=1}^{n-d-1}p^ie_{d+i}-\sum_{j=1}^{d-1}p^je_{d+1-j}\right)+p^{n-d}e_{n}-p^{d}e_{1}  \right). 
\end{align*}
We have $\RRR_{s_0}\omega_d=\omega_d-p(e_1-e_n)$, so 
\begin{align}
\nonumber(\II_n-\RRR_{\cc^{(d)}})\omega_d&=p(e_1-e_n)-(1-p)\left(\sum_{i=1}^{n-d-1}p^ie_{d+i}-\sum_{j=1}^{d-1}p^je_{d+1-j}\right) \\ \nonumber&\,\,\,\,\,\,-p^{n-d}(pe_1+(1-p)e_n)+p^{d}(pe_n+(1-p)e_1) \\ 
\nonumber&=-(1-p)\left(\sum_{i=1}^{n-d-1}p^ie_{d+i}-\sum_{j=1}^{d-1}p^je_{d+1-j}\right) \\ \label{eq:cc7}&\,\,\,\,\,\,-(p^{n-d+1}+p^{d+1}-p^d-p)e_1+(p^{n-d+1}-p^{n-d}+p^{d+1}-p)e_n. 
\end{align} 
Also, the matrix $\RRR_{\cc^{(d)}}$ is row-stochastic, so 
\begin{equation}\label{eq:cc8}
(\II_n-\RRR_{\cc^{(d)}})\omega_n=0.
\end{equation} 

Let \[\upsilon=\frac{1}{np}\left(\left(\frac{n+1}{2}-\frac{dp}{n(1-p)}\right)\omega_n+\frac{p}{1-p}\omega_d-\zeta\right).\]
Invoking \eqref{eq:cc4}, \eqref{eq:cc7}, and \eqref{eq:cc8}, it is straightforward to check that $(\II_n-\RRR_{\cc^{(d)}})\upsilon=\theta^\vee$. It is also easy to check that $\upsilon\in V$. Since $\II_n-\RRR_{\cc^{(d)}}$ is a linear transformation of $\mathbb R^n$ whose restriction to $V$ is $\II_V-R_{\cc^{(d)}}$, this completes the proof. 
\end{proof} 

The proof of \eqref{eq:type_A_sigma's} now follows from \cref{thm:covariance_simple,lem:computations_A}. Indeed, for $p\neq 1/2$, we have 
\begin{align*}
\sigma_{\cc^{(d)}}^2&=\frac{1}{r}\frac{p}{2p-1}\langle\theta^\vee,(2p(\II_V-R_{\cc^{(d)}})^{-1}-\II_V)\theta^\vee\rangle \\ 
&=\frac{1}{n-1}\frac{p}{2p-1}\left\langle\theta^\vee,\frac{2}{n}\left(\left(\frac{n+1}{2}-\frac{dp}{n(1-p)}\right)\omega_n+\frac{p}{1-p}\omega_d-\zeta\right)-\theta^\vee\right\rangle \\ 
&=\frac{1}{n-1}\frac{p}{2p-1}\left(\frac{2}{n}\left(\frac{p}{1-p}\langle\theta^\vee,\omega_d\rangle-\langle\theta^\vee,\zeta\rangle\right)-\langle\theta^\vee,\theta^\vee\rangle\right) \\ 
&=\frac{1}{n-1}\frac{p}{2p-1}\left(\frac{2}{n}\left(\frac{p}{1-p}-(1-n)\right)-2\right) \\ 
&=\frac{2}{n(n-1)}\frac{p}{1-p}.  
\end{align*} 

Finally, we have $\sum_{\beta\in\Phi^+}\norm{\beta}=\frac{n(n-1)}{2}\sqrt{2}$, so the formula for $\lim_{K\to\infty}\EE[\ell(v_p(\bb^K))]/\sqrt{K}$ in \cref{table:sigma_b} follows from \cref{cor:length} and the formula for $\sigma_\bb^2$. 

\subsection{Type \texorpdfstring{$\widetilde E_6$}{widetilde{E}_6}}
Let $\Phi$ be the root system of type $E_6$. Then $r=6$. There are several different isomorphic ways of explicitly defining $\Phi$; we will find it convenient to let $n=8$ and define 
\[\Phi=\left\{(\beta_1,\ldots,\beta_8)\in\ZZ^8:\sum_{i=1}^8\beta_i^2=2,\,\sum_{i=1}^8\beta_i\in2\ZZ,\,\beta_6=\beta_7=-\beta_8\right\}.\]
Then \[V=\{(\gamma_1,\ldots,\gamma_8)\in\mathbb R^8:\gamma_6=\gamma_7=-\gamma_8\}.\]
We will choose the simple roots to be $\alpha_1,\ldots,\alpha_6$, where $\alpha_1=\frac{1}{2}(e_1-e_2-e_3-e_4-e_5-e_6-e_7+e_8)$, $\alpha_2=e_1+e_2$, and $\alpha_i=e_{i-1}-e_{i-2}$ for all $3\leq i\leq 6$. With these conventions, we have 
\[\theta^\vee=\frac{1}{2}(e_1+e_2+e_3+e_4+e_5-e_6-e_7+e_8).\] 
The Coxeter graph of the affine Weyl group $W=\widetilde E_6$ is 
\[\begin{array}{l}\includegraphics[height=2.120cm]{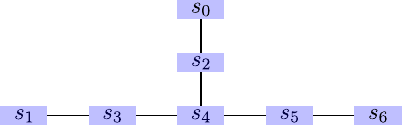}\end{array}.\] 

Let $\cc=s_6s_5s_4s_3s_2s_1s_0$. The Coxeter graph of $\widetilde E_6$ is a tree, so \cref{rem:flip} tells us that to prove the formula for $\sigma_\bb^2$ in \cref{table:sigma_b}, it suffices to prove it when $\bb=\cc$. Using \cref{thm:covariance_simple} and a computer, we have found that \[\sigma_\cc^2=\frac{1}{72}\frac{p}{1-p}.\] This yields the formula for $\sigma_\bb^2$ in \cref{table:sigma_b}, and the formula for $\lim_{K\to\infty}\EE[\ell(v_p(\bb^K))]/\sqrt{K}$ follows from \cref{cor:length} and the fact that $\sum_{\beta\in\Phi^+}\norm{\beta}=36\sqrt{2}$.

\subsection{Type \texorpdfstring{$\widetilde E_7$}{widetilde{E}_7}}
Let $\Phi$ be the root system of type $E_7$. Then $r=7$. There are several different isomorphic ways of explicitly defining $\Phi$; we will find it convenient to let $n=8$ and define 
\[\Phi=\left\{(\beta_1,\ldots,\beta_8)\in\ZZ^8:\sum_{i=1}^8\beta_i^2=2,\,\sum_{i=1}^8\beta_i\in2\ZZ,\,\beta_7=-\beta_8\right\}.\]
Then \[V=\{(\gamma_1,\ldots,\gamma_8)\in\mathbb R^8:\gamma_7=-\gamma_8\}.\]
We will choose the simple roots to be $\alpha_1,\ldots,\alpha_7$, where $\alpha_1=\frac{1}{2}(e_1-e_2-e_3-e_4-e_5-e_6-e_7+e_8)$, $\alpha_2=e_1+e_2$, and $\alpha_i=e_{i-1}-e_{i-2}$ for all $3\leq i\leq 7$. With these conventions, we have 
\[\theta^\vee=-e_7+e_8.\] 
The Coxeter graph of the affine Weyl group $W=\widetilde E_7$ is 
\[\begin{array}{l}\includegraphics[height=1.220cm]{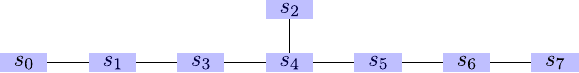}\end{array}.\] 

Let $\cc=s_7s_6s_5s_4s_3s_2s_1s_0$. The Coxeter graph of $\widetilde E_7$ is a tree, so \cref{rem:flip} tells us that to prove the formula for $\sigma_\bb^2$ in \cref{table:sigma_b}, it suffices to prove it when $\bb=\cc$. Using \cref{thm:covariance_simple} and a computer, we have found that \[\sigma_\cc^2=\frac{1}{168}\frac{p}{1-p}.\] This yields the formula for $\sigma_\bb^2$ in \cref{table:sigma_b}, and the formula for $\lim_{K\to\infty}\EE[\ell(v_p(\bb^K))]/\sqrt{K}$ follows from \cref{cor:length} and the fact that $\sum_{\beta\in\Phi^+}\norm{\beta}=63\sqrt{2}$. 

\subsection{Type \texorpdfstring{$\widetilde E_8$}{widetilde{E}_8}}
Let 
\[\Phi=\left\{(\beta_1,\ldots,\beta_8)\in\ZZ^8:\sum_{i=1}^8\beta_i^2=2,\,\sum_{i=1}^8\beta_i\in2\ZZ\right\}\] be the root system of type $E_8$. Then $r=n=8$, and $V=\mathbb R^8$. We will choose the simple roots to be $\alpha_1,\ldots,\alpha_8$, where $\alpha_1=\frac{1}{2}(e_1-e_2-e_3-e_4-e_5-e_6-e_7+e_8)$, $\alpha_2=e_1+e_2$, and $\alpha_i=e_{i-1}-e_{i-2}$ for all $3\leq i\leq 8$. With these conventions, we have 
\[\theta^\vee=e_7+e_8.\] 
The Coxeter graph of the affine Weyl group $W=\widetilde E_8$ is 
\[\begin{array}{l}\includegraphics[height=1.220cm]{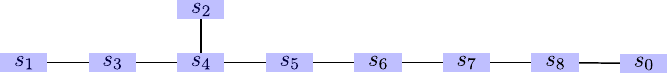}\end{array}.\] 

Let $\cc=s_8s_7s_6s_5s_4s_3s_2s_1s_0$. The Coxeter graph of $\widetilde E_8$ is a tree, so \cref{rem:flip} tells us that to prove the formula for $\sigma_\bb^2$ in \cref{table:sigma_b}, it suffices to prove it when $\bb=\cc$. Using \cref{thm:covariance_simple} and a computer, we have found that \[\sigma_\cc^2=\frac{1}{480}\frac{p}{1-p}.\] This yields the formula for $\sigma_\bb^2$ in \cref{table:sigma_b}, and the formula for $\lim_{K\to\infty}\EE[\ell(v_p(\bb^K))]/\sqrt{K}$ follows from \cref{cor:length} and the fact that $\sum_{\beta\in\Phi^+}\norm{\beta}=120\sqrt{2}$. 

\subsection{Type \texorpdfstring{$\widetilde F_4$}{widetilde{F}_4}} 
Let $r=n=4$, and let
\[\Phi=\{\pm e_i\pm e_j:1\leq i<j\leq 4\}\sqcup\{\pm e_i:1\leq i\leq 4\}\sqcup\{(\pm e_1\pm e_2\pm e_3\pm e_4)/2\}\] be the root system of type $F_4$. Then $V=\mathbb R^4$. We choose the simple roots to be 
\[\alpha_1=e_2-e_3,\quad \alpha_2=e_3-e_4,\quad \alpha_3=e_4,\quad\alpha_4=\frac{1}{2}(e_1-e_2-e_3-e_4).\] With these conventions, we have 
\[\theta^\vee=e_1+e_2.\] 
The Coxeter graph of the affine Weyl group $W=\widetilde F_4$ is 
\[\begin{array}{l}\includegraphics[height=0.448cm]{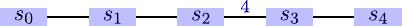}\end{array}.\] 

Let $\cc=s_4s_3s_2s_1s_0$. The Coxeter graph of $\widetilde F_4$ is a tree, so \cref{rem:flip} tells us that to prove the formula for $\sigma_\bb^2$ in \cref{table:sigma_b}, it suffices to prove it when $\bb=\cc$. Using \cref{thm:covariance_simple} and a computer, we have found that \[\sigma_\cc^2=\frac{1}{48}\frac{p}{1-p}.\] This yields the formula for $\sigma_\bb^2$ in \cref{table:sigma_b}, and the formula for $\lim_{K\to\infty}\EE[\ell(v_p(\bb^K))]/\sqrt{K}$ follows from \cref{cor:length} and the fact that $\sum_{\beta\in\Phi^+}\norm{\beta}=12\sqrt{2}+12$. 

\subsection{Type \texorpdfstring{$\widetilde G_2$}{widetilde{G}_2}} 
Let $\Phi$ be the root system of type $G_2$. Then $r=2$. We find it convenient to let $n=3$ and define
\[\Phi=\{e_i-e_j:i,j\in[3],\,i\neq j\}\sqcup\{\pm (e_i+e_j-2e_k):\{i,j,k\}=\{1,2,3\}\}.\] Then \[V=\{(\gamma_1,\gamma_2,\gamma_3)\in\mathbb R^3:\gamma_1+\gamma_2+\gamma_3=0\}.\] We choose the simple roots to be 
\[\alpha_1=e_2-e_3,\quad \alpha_2=e_1-2e_2+e_3.\] With these conventions, we have 
\[\theta^\vee=\frac{1}{3}(2e_1-e_2-e_3).\] 
The Coxeter graph of the affine Weyl group $W=\widetilde G_2$ is 
\[\begin{array}{l}\includegraphics[height=0.448cm]{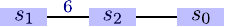}\end{array}.\] 

Let $\cc=s_2s_1s_0$. The Coxeter graph of $\widetilde G_2$ is a tree, so \cref{rem:flip} tells us that to prove the formula for $\sigma_\bb^2$ in \cref{table:sigma_b}, it suffices to prove it when $\bb=\cc$. It is straightforward to show that 
\[(\II_V-R_\cc)^{-1}\theta^\vee=\frac{1}{24p(1-p)}\left((7-6p)e_1-2e_2+(-5+6p)e_3\right).\] Applying \cref{thm:covariance_simple}, we find that \[\sigma_\cc^2=\frac{1}{24}\frac{p}{1-p}.\] This yields the formula for $\sigma_\bb^2$ in \cref{table:sigma_b}, and the formula for $\lim_{K\to\infty}\EE[\ell(v_p(\bb^K))]/\sqrt{K}$ follows from \cref{cor:length} and the fact that $\sum_{\beta\in\Phi^+}\norm{\beta}=3\sqrt{2}+3\sqrt{6}$.  

\section{Concluding Remarks}\label{sec:conclusion}  

In \cref{thm:table}, we gave a table of explicit formulas that tell us the value of $\sigma_\bb^2$ when $\bb$ is a Coxeter word of an arbitrary affine Weyl group $W$. It would be interesting to find a more conceptual explanation for why these formulas are so simple, perhaps expressing the various formulas in \cref{table:sigma_b} as a single type-independent formula. 

We also believe that there should be natural choices of $\bb$ besides Coxeter words such that $\sigma_\bb^2$ has a simple formula. For example, suppose $W=\widetilde A_{n-1}$, and let $\bb_{\theta^\vee}=s_1s_2\cdots s_{n-1}s_ns_{n-1}\cdots s_2s_1s_0$. The infinite word $\bb_{\theta^\vee}^\infty$ is of the form $\mathsf{w}_{\zz_0}(\theta^\vee)$ (as defined in \cref{subsec:billiards}) for some point $\zz_0$ in the interior of the fundamental alcove $\AA$. By performing a computation similar to the one for a Coxeter word in \cref{subsec:TypeA}, one can show that $(\II_V-R_{\bb_{\theta^\vee}})^{-1}\theta^\vee$ is equal to 
\[\frac{1}{n(n+2)p}\left(\left(\textstyle{\frac{n}{1-p}}+\textstyle{\binom{n-1}{2}}\right)e_1+\left(\textstyle{\frac{n}{1-p}}+\left(\textstyle{\binom{n}{2}}-1\right)\right)e_n+\sum_{i=2}^{n-1}\left(\textstyle{\binom{n+1}{2}}+1-in\right)e_i\right).\] One can then apply \cref{thm:covariance_simple} to find that 
\[\sigma_{\bb_{\theta^\vee}}^2=\frac{4}{(n-1)(n+2)}\frac{p}{1-p}.\] 

It is natural to ask for conditions on the word $\bb$ guaranteeing that $\sigma_\bb^2\frac{1-p}{p}$ is independent of $p$. 

We have only dealt with random billiard walks in which the initial direction of the beam of light is a positive scalar multiple of a coroot vector (i.e., a \emph{rational direction}). It would be very interesting to understand the behavior of random billiard walks when the initial directions can be arbitrary. This could likely require tools from ergodic theory. 

It would be interesting to have a better understanding of the mixing properties of the Markov chain $\mathscr{M}$ introduced in \cref{sec:normality}, especially when $W=\widetilde A_{n-1}$ and $\bb=s_{n-1}\cdots s_1s_0$. In this case, the Markov chain has the following natural combinatorial description. The state space is the symmetric group $\mathfrak S_n$ of permutations of $[n]$. For $i\in\ZZ/n\ZZ$, let $\mathbf{h}_i$ be the random operator on $\mathfrak S_n$ that swaps the numbers $i$ and $i+1$ with probability $p$ and does nothing with probability $1-p$ (in particular, $\mathbf{h}_0$ either swaps $n$ and $1$ or does nothing). One transition in $\mathscr{M}$ consists of applying the random operators $\mathbf{h}_0,\mathbf{h}_1,\ldots,\mathbf{h}_{n-1}$ in this order. This Markov chain is similar to, but not the same as, the cyclic adjacent transposition shuffle \cite{NN}.  

\section*{Acknowledgments}
Colin Defant was supported by the National Science Foundation under Award No.\ 2201907 and by a Benjamin Peirce Fellowship at Harvard University. 

Pakawut Jiradilok was supported by Elchanan Mossel's Vannevar Bush Faculty Fellowship ONR-N00014-20-1-2826 and by Elchanan Mossel's Simons Investigator award (622132). Part of this work was done when Jiradilok was at Mahidol University International College (MUIC), Nakhon Pathom, Thailand. Jiradilok would like to express his gratitude towards MUIC, especially its Science Division, for its hospitality.

Elchanan Mossel is partially supported by 
Vannevar Bush Faculty Fellowship ONR-N00014-20-1-2826 and by Simons Investigator award (622132).

\end{document}